\newcommand{\beginsupplement}{%
	\setcounter{table}{0}
	\renewcommand{\thetable}{S\arabic{table}}%
	\setcounter{figure}{0}
	\renewcommand{\thefigure}{S\arabic{figure}}%
	\renewcommand{\appendixname}{Supplementary material}
	\appendix
	\renewcommand{\thesection}{S\arabic{section}}
	\setcounter{section}{0}
	\renewcommand{\thesubsection}{S\arabic{subsection}}
	\setcounter{subsection}{0}
	\renewcommand{\theequation}{S\arabic{equation}}
	\setcounter{equation}{0}
	\setcounter{page}{1}
}
\global\let\AddToReset=\@addtoreset}
\renewcommand{\theequation}{\thesection.\arabic{equation}}
\newtheorem{Theorem}{Theorem}[section]
\newtheorem{Lemma}{\bf Lemma}[section]
\newtheorem{Corollary}{\bf Corollary}[section]
\newtheorem{Proposition}{\bf Proposition}[section]
\newtheorem{@definition}{\sc Definition}[section]
\newenvironment{definition}{\begin{@definition}\rm}{\end{@definition}}
\newtheorem{@remark}{\sc Remark}[section]
\newenvironment{remark}{\begin{@remark}\rm}{\end{@remark}}
\newtheorem{@example}{\sc Example}[section]
\def\eqd{\stackrel{\mbox{$\scriptstyle d$}}{=}}
\def\convd{\stackrel{\mbox{$\scriptstyle d$}}{\to}}
\def\convP{\stackrel{\mbox{$\scriptstyle P$}}{\to}}
\def\dd{~\mathrm d}
\def\E{\mathbb{E}}
\def\e{\mathrm e}
\def\R{\mathbb R}
\def\N{\mathcal{N}}
\def\VG{\mathrm{VG}}
\def\Var{\mathrm{Var}}
\def\Cov{\mathrm{Cov}}
\def\Corr{\mathrm{Corr}}
\begin{document}
	\title{Asymptotic normality in linear regression with approximately sparse structure}
	
	\author{Saulius Jokubaitis$^{1}$, Remigijus Leipus$^{1}$\thanks{Supported by grant
			No.\ S-MIP-20-16 from the Research Council of Lithuania} }
	\date{\today
		\\  \small
		\vskip.2cm
		\begin{itemize}
			\item [$^1$]\hskip -.15cm Institute of Applied Mathematics, Faculty of Mathematics and Informatics,\\
			\hskip -.15cm Vilnius University, Naugarduko 24, Vilnius LT-03225, Lithuania
		\end{itemize}
	}
	
	\maketitle
	
	\begin{abstract}
		In this paper we study the asymptotic normality in high-dimensional linear regression. We focus on the case where the covariance matrix of the regression variables has a KMS structure, in asymptotic settings where the number of predictors, $p$, is proportional to the number of observations, $n$. The main result of the paper is the derivation of the exact asymptotic distribution for the suitably centered and normalized squared norm of the product between predictor matrix, $\mathbb{X}$, and outcome variable, $Y$, i.e. the statistic $\|\mathbb{X}'Y\|_{2}^{2}$.
		Additionally, we consider a specific case of approximate sparsity of the model parameter vector $\beta$ and perform a Monte-Carlo simulation study. The simulation results suggest that the statistic approaches the limiting distribution fairly quickly even under high variable multi-correlation and relatively small number of observations, suggesting possible applications to the construction of statistical testing procedures for the real-world data and related problems.
		
		\medskip
		
		{\small
			
			\noindent {\em MSC}: 60F05, 62E20, 62J99
			
			\noindent {\em Keywords:} linear regression, sparsity, asymptotic normality, variance-gamma distribution
		}
		
	\end{abstract}
	
\section{Introduction}

\noindent Consider a linear regression model
\begin{align}\label{model}
	Y = \mathbb{X} \beta + \varepsilon,
\end{align}
where $Y:= (y_1, \ldots, y_n)' \in \mathbb{R}^{n \times 1}$ are $n$ observations of outcome and $\mathbb{X}~=~(X_1, \ldots, X_n)' \in \mathbb{R}^{n \times p}$ are $p$-dimensional predictors with $X_1, \ldots, X_n$ being i.i.d.\ $p \times 1$ random vectors \mbox{$X_i=(X_{1,i},\dots,X_{p,i})'$,} which are normally distributed with zero mean and covariance matrix $\Sigma$, denoted $X_{i} \stackrel{d}{=} \mathcal{N}_p(0,\Sigma)$. We assume that the covariance matrix $\Sigma$ has a form
\begin{eqnarray} \label{KMS}
	\Sigma &=& (\rho^{|i-j|})_{i,j=1}^p \ =\
	\left[
	\begin{array}{cccc}
		1 & \rho  & \dots & \rho^{p-1} \\
		\rho & 1 & \dots & \rho^{p-2} \\
		\vdots & \vdots & \ddots & \vdots \\
		\rho^{p-1} & \rho^{p-2} & \dots & 1 \\
	\end{array}
	\right],
\end{eqnarray}
if $0 < |\rho|<1$, and $\Sigma = I_p$ if $\rho = 0$ (here and below $I_p$ denotes the $p \times p$ identity matrix). This matrix is often called
the Kac–Murdock–Szeg\"o (KMS) matrix, originally introduced in \cite{kms1953}. As the
autocorrelation matrix of corresponding causal AR(1) processes, KMS matrix is positive definite
and is considered due to the wide array of applications in the literature and its' well known spectral properties (see, e.g., \cite{FIKIORIS2018182} for a thorough literature review). Further, $\varepsilon := (\varepsilon_1, \ldots, \varepsilon_n)' \in \mathbb{R}^{n \times 1} \stackrel{d}{=} \mathcal{N}(0, \sigma^2_{\varepsilon} I_n)$ are unobserved i.i.d.\ errors with $\mathbb{E}\varepsilon_i= 0$, $\operatorname{Var}(\varepsilon_i) = \sigma_{\varepsilon}^{2} > 0$, and $\beta :=(\beta_1, \ldots, \beta_p)' \in \mathbb{R}^{p \times 1}$ is an unknown $p$-dimensional parameter. In practice, the assumption that $\mathbb{E}X_i = 0$ can be untenable and it may be appropriate to add an intercept to the linear model (\ref{model}), however, for simplicity, throughout this paper we will assume that the intercept is known and the variables are centered.

This paper is concerned with the derivation of the exact asymptotic distribution for the suitably centered and normalized squared norm $\|\mathbb{X}'Y\|_{2}^{2}$ under the assumption of the KMS type covariance structure in \eqref{KMS}, where $p$ and $n$ are assumed large.
Throughout the paper we assume that $p, n \to \infty$ and $p/n \to c \in (0, \infty)$. We are particularly interested in cases where $p >n$. Statistics of such form arise in various applications in the context of high-dimensional linear regression, and under normality assumptions, general results can be derived using random matrix theory through Wishart distributions (see, e.g., \cite{dicker2014}). In our paper we approach the problem following an observation by \cite{phdthesis_gaunt}, that the distribution of product of Gaussian random variables admits a variance-gamma distribution, resulting in a set of attractive properties.  In addition to the $\ell_2$-norm statistic, we find that the obtained results can be easily extended towards alternative forms of the statistic, e.g., by using a different norm, which would reduce the problem to manipulating variance-gamma distribution, thus suggesting possible further research cases and useful extensions.

Additionally, we examine a specific case of parameter $\beta$ by considering $\beta_j = j^{-1}$, $j\ge 1$. Similar structures of the vector $\beta$ are often found in the literature when  approximate sparsity of the  coefficients in the linear regression model (\ref{model}) is assumed. See, e.g.,  \cite{ing2019model} and \cite{cha2021inference} for a broader view towards sparsity requirements and its' implications to specific high-dimensional algorithms; \cite{shibata1980asymptotically} and \cite{ing2007accumulated} for model selection problems in autoregressive time series models; or \cite{Chernozhukov2013},  \cite{javanmard2014confidence},  \cite{zhang2014confidence}, \cite{caner2018asymptotically}, \cite{belloni2018high}, \cite{gold2020inference}, \cite{ning2020doubly}, \cite{guo2021doubly} for applications on inference of high-dimensional models and high-dimensional instrumental variable (IV) regression models. Performing Monte Carlo simulations, we find that the empirical distributions of the corresponding statistic approach the limiting distribution reasonably quickly even for large values of $\rho$ and $c$, suggesting that the assumption of sparse structure can be included in the applications and statistical tests.

In this paper, $\eqd$, $\convd$ and $\convP$ denote the equality of distributions, convergence of distributions and convergence in probability, respectively. $C$ stands for a generic positive constant which may assume different values at various locations.
${\bf 1}_{A}$ denotes the indicator function of a set $A$.

The structure of the paper is as follows. In Section \ref{sec:main} we present the main results of the paper. In Section~\ref{sec:properties_vg} we present useful properties of variance-gamma distribution, which are used in Section~\ref{sec:auxiliary} in order to prove some auxiliary results.  In Section \ref{sec:proofs} we present the proof of the main result. Finally, in Section \ref{sec:sparsity} we provide an example of the main result under imposed approximate sparsity assumption for the parameter $\beta$ of the model (\ref{model}). Technical results are presented in Appendix A, while, for brevity, some straightforward yet tedious proofs are presented in the Supplementary material.

\section{Main results}	\label{sec:main}

\noindent In this section we formulate the main results on the normality of statistic $\|\mathbb{X}'Y\|_2^2$.
Introduce the notations:
\begin{eqnarray}
	\kappa_{1,p} &:=& \sum_{k=1}^p \sum_{l=1}^{p}\beta_k \beta_{l} \rho^{|k-l|}, \label{eq:kappa1} \\
	\kappa_{2,p} &:=& \sum_{k=1}^p \Big(\sum_{l=1}^p \beta_l \rho^{|k-l|}\Big)^2, \label{eq:kappa2} \\
	\kappa_{3,p} &:=& \sum_{k,l,j,j'=1}^p \beta_{j} \beta_{j'} \rho^{|k-j|}
	\rho^{|l-j'|} \rho^{|k-l|}.\label{eq:kappa3}
\end{eqnarray}
It is easy to see that, under $\sum_{j=1}^\infty \beta_j^2<\infty$, there exist limits
\begin{eqnarray*}
	\kappa_i &=&\lim_{p\to\infty} \kappa_{i,p}, \ i=1,2,3.
\end{eqnarray*}
Obviously, $\kappa_{2,p} \geq 0$. Moreover, since $(\rho^{|i-j|})_{i,j=1}^p$ is positive semi-definite, $\kappa_{i,p} \geq 0$, $i = 1,3$. Indeed,
$\sum_{k,l=1}^{p} \rho^{|k-l|}a_{k} a_{l} \geq 0$,
thus it suffices to take $a_k = \beta_k$ for $i=1$ and $a_k = \sum_{j=1}^{p}\beta_j \rho^{|k-j|}$ for $i=3$.

Our first main result is the following theorem.

\begin{Theorem}\label{th:wider}
	Assume the model in \eqref{model} with covariance structure in \eqref{KMS}. Let
	$n\to\infty$ and let $p=p_n$ satisfy
	\begin{eqnarray} \label{pnas}
		p\to\infty,  && \frac{p}{n}\to c\in (0,\infty).
	\end{eqnarray}
	Let also the $\beta_j$ satisfy
	\begin{eqnarray}\label{bnas}
		\sum_{j=1}^\infty \beta_j^2&<&\infty.
	\end{eqnarray}
	Then
	\begin{eqnarray}\label{eq:th}
		\frac{\|\mathbb{X}'Y\|_2^2 - n^2\kappa_{2,p} -  p n (\kappa_{1,p} + \sigma_{\varepsilon}^2) }{n^{3/2}} &\convd& {\cal N}(0,s^2),
	\end{eqnarray}
	where variance $s^2$ has the structure
	\begin{eqnarray} \label{eq:th_main_sigma}
		s^2&=&	4 \kappa_{2}^2 + 4(\kappa_{1} + \sigma_{\varepsilon}^2)\left(2 \kappa_{2} c + \kappa_{3} \right) + 2c (\kappa_{1} + \sigma_{\varepsilon}^2)^2  \Big( c + \frac{1 + \rho^2}{ 1 - \rho^2 }\Big).~~~~~~
	\end{eqnarray}
\end{Theorem}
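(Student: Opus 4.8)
The plan is to exploit the i.i.d.\ structure across observations. Writing $X_i=(X_{1,i},\dots,X_{p,i})'$ for the $i$-th row of $\mathbb{X}$, $y_i=X_i'\beta+\varepsilon_i$ for the $i$-th outcome, and $Z_i:=(X_i,y_i)$ for the resulting i.i.d.\ jointly Gaussian blocks, the statistic is a quadratic form in these blocks,
\begin{eqnarray*}
\|\mathbb{X}'Y\|_2^2 \;=\; \sum_{k=1}^p\Big(\sum_{i=1}^n X_{k,i}y_i\Big)^2 \;=\; \sum_{i,i'=1}^n (X_i'X_{i'})\,y_iy_{i'}.
\end{eqnarray*}
I would split this into the diagonal part $D:=\sum_{i=1}^n \|X_i\|_2^2\,y_i^2$ and the off-diagonal part $\Omega:=\sum_{i\ne i'}(X_i'X_{i'})y_iy_{i'}$. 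Writing $\mu_k:=\E[X_{k,i}y_i]=\sum_{l}\beta_l\rho^{|k-l|}$ and $\mu:=(\mu_1,\dots,\mu_p)'$, a direct application of Isserlis' theorem (equivalently, of the variance--gamma moment formulas for the products $X_{k,i}y_i$) gives $\E D = n\,[\,p(\kappa_{1,p}+\sigma_\varepsilon^2)+2\kappa_{2,p}\,]$ and $\E\Omega=n(n-1)\kappa_{2,p}$, since $\sum_k\mu_k^2=\kappa_{2,p}$; their sum reproduces the centering $n^2\kappa_{2,p}+pn(\kappa_{1,p}+\sigma_\varepsilon^2)$ up to the term $n\kappa_{2,p}=o(n^{3/2})$, confirming the normalization.

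Next I would run a Hoeffding (H\'ajek) decomposition of $\Omega$. With $g_1(Z_i):=\E[(X_i'X_{i'})y_iy_{i'}\mid Z_i]=y_i\,(X_i'\mu)$ and $\theta:=\kappa_{2,p}$,
\begin{eqnarray*}
\Omega = n(n-1)\theta + 2(n-1)\sum_{i=1}^n\big(g_1(Z_i)-\theta\big) + \Omega_{\mathrm{deg}},
\end{eqnarray*}
with $\Omega_{\mathrm{deg}}$ the completely degenerate remainder. Grouping $D-\E D$ with the linear term yields a single triangular-array sum $L_n:=\sum_{i=1}^n \ell_{n,i}$ of row-wise i.i.d.\ summands $\ell_{n,i}=(\|X_i\|_2^2 y_i^2-\E\|X_i\|_2^2y_i^2)+2(n-1)(g_1(Z_i)-\theta)$. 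Using $\Var(\|X_i\|_2^2 y_i^2)\sim 2p^2(\kappa_{1,p}+\sigma_\varepsilon^2)^2$, the identity $\Var(g_1(Z_i))=\mu'\Sigma\mu\,(\kappa_{1,p}+\sigma_\varepsilon^2)+\kappa_{2,p}^2$ with the key fact $\mu'\Sigma\mu=\kappa_{3,p}$, and $\Cov(\|X_i\|_2^2y_i^2,g_1(Z_i))\sim 2p(\kappa_{1,p}+\sigma_\varepsilon^2)\kappa_{2,p}$, I obtain $\Var(L_n)/n^3\to 2c^2(\kappa_1+\sigma_\varepsilon^2)^2+4(\kappa_2^2+(\kappa_1+\sigma_\varepsilon^2)\kappa_3)+8c(\kappa_1+\sigma_\varepsilon^2)\kappa_2$, which accounts for all of $s^2$ except its last summand. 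A Lyapunov condition for the i.i.d.\ array (finite fourth moments, by Gaussianity) then gives $L_n/n^{3/2}\convd\mathcal N(0,\cdot)$.

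The remaining piece $\Omega_{\mathrm{deg}}/n^{3/2}$ is the crux. Its kernel is dominated by $(X_i'X_{i'})y_iy_{i'}$, and a second Isserlis computation gives $\E[\tilde g(Z_1,Z_2)^2]\sim (\kappa_{1,p}+\sigma_\varepsilon^2)^2\sum_{k,k'}\rho^{2|k-k'|}\sim p(\kappa_1+\sigma_\varepsilon^2)^2\,\tfrac{1+\rho^2}{1-\rho^2}$, whence $\Var(\Omega_{\mathrm{deg}})=2n(n-1)\E[\tilde g^2]$ and $\Var(\Omega_{\mathrm{deg}})/n^3\to 2c(\kappa_1+\sigma_\varepsilon^2)^2\tfrac{1+\rho^2}{1-\rho^2}$, exactly the last term of $s^2$; orthogonality of Hoeffding components shows $\Omega_{\mathrm{deg}}$ is uncorrelated with $L_n$. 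The hard part is establishing the asymptotic \emph{normality} of $\Omega_{\mathrm{deg}}/n^{3/2}$, jointly with $L_n/n^{3/2}$: a completely degenerate $U$-statistic generally has a non-Gaussian (chaos) limit, and here normality is available only because the kernel's inner product $X_i'X_{i'}=\sum_k X_{k,i}X_{k,i'}$ itself aggregates $p\to\infty$ coordinates, so the limit is driven jointly by the $n^2$ pairs and the growing dimension, with the KMS structure entering through $\mathrm{tr}(\Sigma^2)/p\to(1+\rho^2)/(1-\rho^2)$.

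To resolve this cleanly I would, rather than treat the three pieces separately at the CLT stage, impose a martingale difference structure on the whole centered statistic: set $\mathcal F_m=\sigma(Z_1,\dots,Z_m)$ and $d_m=\E[\|\mathbb{X}'Y\|_2^2\mid\mathcal F_m]-\E[\|\mathbb{X}'Y\|_2^2\mid\mathcal F_{m-1}]$, so $d_m$ collects the diagonal term at $m$, the pairs $(i,m)$ with $i<m$, and the $Z_m$-projection of the pairs $(m,j)$ with $j>m$. The conclusion then follows from the martingale central limit theorem once I verify (i) $n^{-3}\sum_m \E[d_m^2\mid\mathcal F_{m-1}]\convP s^2$ and (ii) the conditional Lindeberg condition $n^{-3}\sum_m\E[d_m^2\,{\bf 1}_{\{|d_m|>\epsilon n^{3/2}\}}\mid\mathcal F_{m-1}]\convP 0$. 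Condition (ii) reduces to a fourth-moment bound that is uniform in $n$ by Gaussianity and $\sum_j\beta_j^2<\infty$, while (i) reproduces precisely the four contributions above together with their interactions. The heaviest bookkeeping is in (i): expanding $\E[d_m^2\mid\mathcal F_{m-1}]$ produces sums of KMS powers $\rho^{|k-l|}$ over four indices that must be shown to converge to $\kappa_1,\kappa_2,\kappa_3$ and to $(1+\rho^2)/(1-\rho^2)$, and this is where the auxiliary variance--gamma moment lemmas and the summability $\sum_j\beta_j^2<\infty$ do the real work.
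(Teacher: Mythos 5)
Your route is genuinely different from the paper's. The paper never works in observation space: it writes $\|\mathbb{X}'Y\|_2^2=\sum_{k=1}^p H_k^2$ with $H_k=\sum_{j}X_{k,j}Z_j$ and uses the variance-gamma representation (Proposition 3.1(iii)) to realize the whole vector $(H_1,\dots,H_p)$ as $\sigma_{2,p}\big(\theta_k^{(p)}W_n+\sqrt{1-(\theta_k^{(p)})^2}\sqrt{W_n}\,U_k\big)$ with a single $\Gamma(n/2,1/2)$ variable $W_n$ independent of a $p$-dimensional Gaussian $U$. Expanding the square then yields a negligible term $I_1$, a term $I_2$ driven by the CLT for $W_n$, a linear-plus-quadratic Gaussian form $I_3=\sigma_{2,p}^2(2\mathbf{b}'U+\sqrt{c/p}\,(U'AU-p))+o_P(1)$ handled by a bespoke cumulant CLT (Lemma 4.2), and the deterministic centering $I_4$; independence of $W_n$ and $U$ gives the joint limit. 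Your Hoeffding/martingale decomposition reaches an analogous three-way split in observation space, and your variance bookkeeping is correct: $\mu'\Sigma\mu=\kappa_{3,p}$, $\Var(\|X_i\|_2^2y_i^2)\sim 2p^2(\kappa_{1,p}+\sigma_\varepsilon^2)^2$, the cross-covariance $\sim 2p(\kappa_{1,p}+\sigma_\varepsilon^2)\kappa_{2,p}$, and $\E\tilde g^2\sim p(\kappa_1+\sigma_\varepsilon^2)^2\tfrac{1+\rho^2}{1-\rho^2}$ all check, and the two contributions sum exactly to $s^2$ (the split differs from the paper's $s_1^2+s_2^2$ term-by-term, but the totals agree). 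What the paper's approach buys is that the dangerous degenerate chaos is converted, by conditioning out $W_n$, into an honest quadratic form in a fixed Gaussian vector, so normality reduces to eigenvalue/cumulant bounds such as $\sum_j(\lambda_j^{(p)})^3=o(p^{3/2})$; what yours buys is independence from the exact-Gaussian/VG machinery at the decomposition stage and a more transparent path to non-Gaussian designs.

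The one genuine gap is the step you yourself flag: asserting the two martingale CLT conditions is not the same as verifying them, and condition (i) is where all the work of the paper's Lemma 4.2 and Lemma A.5 reappears. Concretely, $n^{-3}\sum_m\E[d_m^2\mid\mathcal F_{m-1}]$ contains the random double sum $n^{-3}\sum_m\sum_{i,i'<m}G(Z_i,Z_{i'})$ with $G(z,z'):=\E[\tilde g(z,Z)\tilde g(z',Z)]$, and you must show the off-diagonal ($i\neq i'$) part is $o_P(1)$; its second moment is governed by $\E[G(Z_1,Z_2)^2]$, i.e.\ by fourth-order sums of KMS powers $\rho^{|k-l|}$ over four indices — the exact analogue of the paper's $\operatorname{tr}((\Sigma_UA)^3)=o(p^{3/2})$ estimate. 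So the KMS trace computations cannot be bypassed; they migrate into your condition (i), and until they are carried out the asymptotic normality of $\Omega_{\mathrm{deg}}/n^{3/2}$ (jointly with $L_n$) remains a plausible claim rather than a proof.
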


Our second main result deals with the case where the centering sequence in \eqref{eq:th} is modified to include the limiting values of $\kappa_{i,p}$, $i=1,2$.

\begin{Theorem}\label{th:main}
	Let the assumptions of Theorem \ref{th:wider} hold. In addition, assume that $\sum_{j=p+1}^{\infty}\beta_j^2 = o(p^{-1/2})$ and $\sup_{j \geq 1} |\beta_j| j^{\alpha} < \infty$ with  $\alpha > 1/2$.  Then,
	\begin{eqnarray}\label{eq:th_2}
		\frac{\|\mathbb{X}'Y\|_2^{2} - n^2\big(\kappa_{2} +  c (\kappa_{1} + \sigma_{\varepsilon}^2)\big)}{n^{3/2}} &\convd&  \mathcal{N}(0,s^2).
	\end{eqnarray}
\end{Theorem}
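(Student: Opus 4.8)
The plan is to derive Theorem~\ref{th:main} from Theorem~\ref{th:wider} by showing that the two centering sequences differ by a term of order $o(n^{3/2})$, so that the additional centering does not change the limiting distribution. Since Theorem~\ref{th:wider} already establishes
\[
\frac{\|\mathbb{X}'Y\|_2^2 - n^2\kappa_{2,p} - pn(\kappa_{1,p}+\sigma_\varepsilon^2)}{n^{3/2}} \convd \mathcal{N}(0,s^2),
\]
it suffices by Slutsky's theorem to prove that
\[
\frac{n^2\kappa_{2,p} + pn(\kappa_{1,p}+\sigma_\varepsilon^2) - n^2\big(\kappa_2 + c(\kappa_1+\sigma_\varepsilon^2)\big)}{n^{3/2}} \to 0.
\]

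**First** I would split this deterministic difference into the contributions coming from each $\kappa$. Writing $p/n = c + o(1)$, the term $pn(\kappa_{1,p}+\sigma_\varepsilon^2)$ equals $n^2(p/n)(\kappa_{1,p}+\sigma_\varepsilon^2)$, and I would compare it to $n^2 c(\kappa_1+\sigma_\varepsilon^2)$. After dividing by $n^{3/2}$, the relevant quantities to control are
\[
n^{1/2}(\kappa_{2,p}-\kappa_2), \qquad n^{1/2}\Big(\tfrac{p}{n}\kappa_{1,p} - c\kappa_1\Big).
\]
For the second of these I would further decompose $\tfrac{p}{n}\kappa_{1,p} - c\kappa_1 = (\tfrac{p}{n}-c)\kappa_{1,p} + c(\kappa_{1,p}-\kappa_1)$; since $\kappa_{1,p}$ is bounded (it converges to $\kappa_1$), the task reduces to showing $n^{1/2}(\tfrac{p}{n}-c)\to 0$ together with $n^{1/2}(\kappa_{1,p}-\kappa_1)\to 0$ and $n^{1/2}(\kappa_{2,p}-\kappa_2)\to 0$.

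**The main technical work** is to establish the two tail-rate estimates $\kappa_{1,p}-\kappa_1 = o(n^{-1/2})$ and $\kappa_{2,p}-\kappa_2 = o(n^{-1/2})$, and this is precisely where the new hypotheses of Theorem~\ref{th:main} enter. The assumption $\sum_{j=p+1}^\infty \beta_j^2 = o(p^{-1/2})$ directly controls the $\ell_2$-mass of the truncated tail, while $\sup_{j\ge1}|\beta_j|j^\alpha < \infty$ with $\alpha > 1/2$ gives a polynomial decay $|\beta_j|\le C j^{-\alpha}$ that I would use to bound the cross terms. Concretely, $\kappa_1 - \kappa_{1,p} = \sum_{(k,l)\not\subseteq\{1,\dots,p\}^2} \beta_k\beta_l\rho^{|k-l|}$, and using $|\rho|<1$ together with the decay of $\beta_j$ and the geometric summability of $\rho^{|k-l|}$, I expect a bound of order $\sum_{k>p}|\beta_k|$ times a constant, which by Cauchy--Schwarz or the polynomial bound is $o(p^{-1/2}) = o(n^{-1/2})$. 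The analogous but slightly more involved estimate for $\kappa_{2,p}$ requires expanding the inner square $\big(\sum_l \beta_l \rho^{|k-l|}\big)^2$ and carefully tracking the tail contributions in the triple sum.

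**The hard part** will be making the $\kappa_2$ tail estimate rigorous, since $\kappa_{2,p}$ involves both truncation of the outer index $k$ at $p$ and truncation of the inner convolution $\sum_{l=1}^p \beta_l\rho^{|k-l|}$, so the difference $\kappa_2-\kappa_{2,p}$ splits into several pieces that must each be shown to be $o(p^{-1/2})$ under the stated decay conditions; the condition $\alpha>1/2$ is exactly what guarantees that $\sum_{k>p} k^{-\alpha}$ and the relevant convolution tails decay fast enough. I would finally note that the requirement $n^{1/2}(p/n - c)\to 0$ is implicitly needed and would either be absorbed into the hypothesis $p/n\to c$ via a sufficiently fast rate or handled by observing that $\kappa_{1,p}+\sigma_\varepsilon^2$ multiplying $(p/n-c)$ stays bounded; in a careful write-up one would want $p/n - c = o(n^{-1/2})$, which is a natural refinement of \eqref{pnas} that I would make explicit. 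Assembling the three $o(1)$ bounds and invoking Slutsky completes the proof.
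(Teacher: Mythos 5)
Your proposal follows essentially the same route as the paper: the paper likewise reduces Theorem~\ref{th:main} to Theorem~\ref{th:wider} plus the deterministic tail estimates $\kappa_{i,p}-\kappa_i=o(p^{-1/2})$, $i=1,2$ (its Lemma~\ref{lemma:op}, proved by exactly the kind of tail decompositions and Cauchy--Schwarz/geometric-summability bounds you sketch), and then invokes Slutsky. Your observation that one additionally needs $n^{1/2}(p/n-c)\to 0$ is well taken --- the paper absorbs that term into an unexplained $o(1)$ in its one-line decomposition, so your explicit handling of it is, if anything, more careful than the original.
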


The proofs of these theorems are given in Section~\ref{sec:proofs}.

\begin{remark}
	For alternative expressions of $\kappa_1$, $\kappa_2$ and $\kappa_3$, see Lemma~\ref{lemma:alt_expressions} below.
\end{remark}

Define
\begin{eqnarray*}
	\beta(x) &:=& \sum_{j=1}^\infty \beta_j^2 x^j, \ \ |x|\le 1.
\end{eqnarray*}

The following corollary deals with the case when $\rho = 0$, i.e., $\Sigma = I_{p}$. The result easily follows from Theorem \ref{th:main}, noting that in this case $\kappa_{i} = \beta(1)$, $i = 1,2,3$.

\begin{Corollary} \label{cor:Ip}
	Assume a model \eqref{model} with covariance structure $\Sigma = I_p$.
	Let assumptions \eqref{pnas} and \eqref{bnas} be satisfied. In addition, assume that $\sum_{j=p+1}^{\infty}\beta_j^2 = o(p^{-1/2})$ and $\sup_{j \geq 1} |\beta_j| j^{\alpha} < \infty$ with  $\alpha > 1/2$. Then,
	\begin{eqnarray}
		\frac{\|\mathbb{X}'Y\|_2^{2} - n^2 (\beta(1)(1+c) + c \sigma_{\varepsilon}^2)}{n^{3/2}} &\convd& \mathcal{N}(0,s^2),
	\end{eqnarray}
	where
	\begin{eqnarray}
		s^2&=& 2\beta(1)^2 \big(4+5c+c^2 \big) + 4\beta(1)\sigma_{\varepsilon}^2\big(1+3c+c^2 \big) + 2\sigma_{\varepsilon}^4(c+c^2).~~~~~~
	\end{eqnarray}
	
\end{Corollary}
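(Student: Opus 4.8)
The plan is to derive the corollary directly from Theorem~\ref{th:main} by specializing the KMS structure~\eqref{KMS} to $\rho=0$, for which $\Sigma=I_p$ by definition. Since the additional hypotheses imposed here---namely \eqref{pnas}, \eqref{bnas}, $\sum_{j=p+1}^\infty \beta_j^2 = o(p^{-1/2})$, and $\sup_{j\ge 1}|\beta_j| j^\alpha<\infty$ with $\alpha>1/2$---are precisely those of Theorem~\ref{th:main}, that theorem applies verbatim once $\rho$ is set to zero. The entire task therefore reduces to evaluating the limiting constants $\kappa_1,\kappa_2,\kappa_3$ and the variance $s^2$ at $\rho=0$, and then matching the resulting centering sequence and variance against the stated expressions.

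First I would evaluate the three constants at $\rho=0$. With the convention $0^0=1$ consistent with $\Sigma=I_p$, the factor $\rho^{|k-l|}$ reduces to the indicator ${\bf 1}_{\{k=l\}}$. Substituting this into \eqref{eq:kappa1} collapses the double sum to the diagonal, giving $\kappa_{1,p}=\sum_{k=1}^p\beta_k^2$. In \eqref{eq:kappa2} the inner sum $\sum_{l=1}^p\beta_l\rho^{|k-l|}$ collapses to $\beta_k$, so $\kappa_{2,p}=\sum_{k=1}^p\beta_k^2$ as well. In \eqref{eq:kappa3} the three factors $\rho^{|k-j|}\rho^{|l-j'|}\rho^{|k-l|}$ force $k=j$, $l=j'$, and $k=l$ simultaneously, hence $k=l=j=j'$, again yielding $\kappa_{3,p}=\sum_{k=1}^p\beta_k^2$. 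Passing to the limit under \eqref{bnas} gives $\kappa_1=\kappa_2=\kappa_3=\sum_{k=1}^\infty\beta_k^2=\beta(1)$.

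Next I would substitute these values into Theorem~\ref{th:main}. The centering $n^2\big(\kappa_2+c(\kappa_1+\sigma_\varepsilon^2)\big)$ becomes $n^2\big(\beta(1)(1+c)+c\sigma_\varepsilon^2\big)$, matching the corollary. For the variance I would set $\kappa_1=\kappa_2=\kappa_3=\beta(1)$ in \eqref{eq:th_main_sigma} and use $\frac{1+\rho^2}{1-\rho^2}\big|_{\rho=0}=1$, obtaining $s^2=4\beta(1)^2+4(\beta(1)+\sigma_\varepsilon^2)\beta(1)(2c+1)+2c(c+1)(\beta(1)+\sigma_\varepsilon^2)^2$. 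Expanding $(\beta(1)+\sigma_\varepsilon^2)^2$ and collecting the coefficients of $\beta(1)^2$, $\beta(1)\sigma_\varepsilon^2$, and $\sigma_\varepsilon^4$ separately yields $2\beta(1)^2(4+5c+c^2)$, $4\beta(1)\sigma_\varepsilon^2(1+3c+c^2)$, and $2\sigma_\varepsilon^4(c+c^2)$, respectively, which is the claimed form of $s^2$.

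There is no substantive obstacle here: the result is a specialization rather than a new argument, so the only care required is in the bookkeeping. The one point worth flagging explicitly is the $0^0=1$ convention on the diagonal of $\rho^{|k-l|}$, which is exactly what makes $\Sigma=I_p$ the $\rho=0$ limit of \eqref{KMS} and what guarantees the diagonal terms survive in each $\kappa_{i,p}$; with that convention fixed, the remaining steps are the elementary algebraic simplifications indicated above.
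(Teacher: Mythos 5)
Your proposal is correct and follows essentially the same route as the paper, which likewise obtains the corollary by specializing Theorem~\ref{th:main} to $\rho=0$ and noting that $\kappa_1=\kappa_2=\kappa_3=\beta(1)$ in that case; your algebraic expansion of $s^2$ checks out. The only difference is that you spell out the bookkeeping (and the diagonal convention for $\rho^{|k-l|}$ at $\rho=0$, which the paper handles by simply defining $\Sigma=I_p$ in that case) that the paper leaves implicit.
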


\section{Properties of the variance-gamma distribution}
\label{sec:properties_vg}

\noindent
In this section we provide some properties of the variance-gamma distribution, which will be used in the following proofs.

Recall that the variance-gamma distribution with parameters $r>0$, $\theta\in \R$, $\sigma>0$
and $\mu\in\R$ has density
\begin{eqnarray}\label{VG}
	f^{\rm VG}(x)=\frac{1}{\sigma\sqrt\pi\Gamma(r/2)}\,\e^{\theta(x-\mu)/\sigma^2}
	\bigg(\frac{|x-\mu|}{2\sqrt{\theta^2+\sigma^2}}\bigg)^{(r-1)/2}K_{(r-1)/2}
	\bigg(\frac{\sqrt{\theta^2+\sigma^2}}{\sigma^2}\, |x-\mu|\bigg),\nonumber\\
\end{eqnarray}
where $x\in \R$, $K_\nu(x)$ is the modified Bessel function of the second kind. For a random variable $Q$  with density \eqref{VG} we write $Q\stackrel{d}{=} \VG(r,\theta,\sigma,\mu)$. Let $\Gamma(a,b)$, $a>0$, $b>0$, denote the gamma distribution with density
\begin{eqnarray*}
	f^{\rm G}(x) &=& \frac{b^a}{\Gamma(a)}\, x^{a-1}\e^{-b x}, \ \ x>0.
\end{eqnarray*}
It holds that
\begin{eqnarray}\label{repr}
	Q &\eqd & \mu+\theta W_r + \sigma \sqrt{W_r} U,
\end{eqnarray}
where $W_r\stackrel{d}{=} \Gamma(r/2,1/2)$, $U\stackrel{d}{=} \N(0,1)$,
$W_r$ and $U$ are independent. The characteristic function of $Q\stackrel{d}{=} \VG(r,\theta,\sigma,\mu)$
has a form (see, e.g., \cite{RePEc:oup:revfin:v:2:y:1998:i:1:p:79-105.}, \cite{Kotz2001TheLD})
\begin{eqnarray}\label{chVG}
	\varphi_{Q}(t) &=& \frac{\e^{{\rm i}\mu t}}{(1+\sigma^2 t -2{\rm i}\theta t)^{r/2}}, \ \ t\in \R.
\end{eqnarray}

\smallskip

\noindent We note the following properties of the variance-gamma distribution.

\begin{enumerate}[label={(\rm \roman*)}]
	\item If $Q_1\stackrel{d}{=} \VG(r_1,\theta,\sigma,\mu_1)$ and $Q_2\stackrel{d}{=} \VG(r_2,\theta,\sigma,\mu_2)$ are independent random variables then
	\begin{eqnarray*}
		Q_1+Q_2 &\stackrel{d}{=}& \VG(r_1+r_2,\theta,\sigma,\mu_1+\mu_2).
	\end{eqnarray*}
	\item If $Q\stackrel{d}{=} \VG(r,\theta,\sigma,\mu)$, then for any $a>0$
	\begin{eqnarray*}
		a Q &\stackrel{d}{=}& \VG(r,a\theta,a\sigma,a\mu).
	\end{eqnarray*}
\end{enumerate}

\noindent The following proposition is crucial for our purposes.

\begin{Proposition} \label{pr:VG}
	{\rm (i)} If $(\xi_1,\xi_2)'\stackrel{d}{=} \N(0,\Sigma)$, where $\Sigma=\Big(
	\begin{array}{cc}
		\sigma_1^2 & \rho\sigma_1\sigma_2 \\
		\rho\sigma_1\sigma_2& \sigma_2^2
	\end{array}
	\Big)$,
	then
	\begin{eqnarray*}
		\xi_1\xi_2 &\stackrel{d}{=}& \VG(1,\rho\sigma_1\sigma_2, \sqrt{1-\rho^2}\sigma_1\sigma_2,0).
	\end{eqnarray*}
	
	\noindent
	{\rm (ii)} If $(\xi_{1j},\xi_{2j})'$, $j=1,\dots,n$ are i.i.d.\ random vectors
	with common distribution $\N(0,\Sigma)$, then
	\begin{eqnarray*}
		\sum_{j=1}^n  \xi_{1j}\xi_{2j} &\stackrel{d}{=}& \VG(n,\rho\sigma_1\sigma_2, \sqrt{1-\rho^2}\sigma_1\sigma_2,0)
	\end{eqnarray*}
	and
	\begin{eqnarray*}
		\sum_{j=1}^n  \xi_{1j}\xi_{2j}  &\eqd& \sigma_1\sigma_2(\rho W_n+\sqrt{1-\rho^2}\sqrt{W_n} U),
	\end{eqnarray*}
	where $W_n\stackrel{d}{=} \Gamma (n/2,1/2)$ and $U\stackrel{d}{=} \N(0,1)$ are independent random variables.
	
	\smallskip
	
	\noindent {\rm (iii)} Assume that $(\xi_{1j}^{(1)},\dots,\xi_{1j}^{(p)}, \xi_{2j})'$, $j=1,\dots,n$, are i.i.d.\ copies of $(\xi_1^{(1)},\dots,\xi_1^{(p)}, \xi_2)'\eqd \N(0,\Sigma^{(p)})$ and let
	$\rho^{(kl)}:=\Corr (\xi^{(k)}_1,\xi^{(l)}_1)$, $\rho^{(k)}:=
	\Corr(\xi_1^{(k)}, \xi_2)$, $(\sigma_1^{(k)})^2:=\Var(\xi_1^{(k)})$, $\sigma_2^2:=\Var(\xi_{2})$, $k,l=1,\dots,p$. Then
	\begin{eqnarray*}
		\left(
		\begin{array}{c}
			\sum_{j=1}^n  \xi^{(1)}_{1j}\xi_{2j}  \\
			\vdots \\
			\sum_{j=1}^n  \xi_{1j}^{(p)}\xi_{2j}
		\end{array}
		\right) &\eqd& \left(\begin{array}{c}
			\sigma_1^{(1)}\sigma_2(\rho^{(1)} W_n+\sqrt{1-(\rho^{(1)})^2}\sqrt{W_n} U_1) \\
			\vdots \\
			\sigma_1^{(p)}\sigma_2(\rho^{(p)} W_n+\sqrt{1-(\rho^{(p)})^2}\sqrt{W_n} U_p)
		\end{array}
		\right),
	\end{eqnarray*}
	where $(U_1,\dots,U_p)'\stackrel{d}{=} \N(0, \Sigma_U)$, $\Sigma_U=(\sigma_U^{(kl)})$ with
	\begin{eqnarray}\label{UkUl}
		\sigma_U^{(k,l)}&=&\E U_k U_l \ =\ \frac{\rho^{(kl)}-\rho^{(k)}\rho^{(l)}}
		{\sqrt{1-(\rho^{(k)})^2}\sqrt{1-(\rho^{(l)})^2}}, \ \ k,l=1,\dots,p.
	\end{eqnarray}
\end{Proposition}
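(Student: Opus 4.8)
The plan is to prove all three parts from a single device: the Gaussian regression decomposition of one coordinate on the other, which converts each product into a quadratic part (producing a gamma factor) plus a conditionally Gaussian part (producing the $\sqrt{W}\,U$ factor), so that everything is matched against the representation \eqref{repr}.

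For part (i) I would write $\xi_2 = \rho(\sigma_2/\sigma_1)\xi_1 + \sqrt{1-\rho^2}\,\sigma_2 Z$ with $Z\eqd\N(0,1)$ independent of $\xi_1$, so that
\begin{eqnarray*}
	\xi_1\xi_2 &=& \rho\sigma_1\sigma_2\,\frac{\xi_1^2}{\sigma_1^2} + \sqrt{1-\rho^2}\,\sigma_1\sigma_2\,\frac{\xi_1}{\sigma_1}Z.
\end{eqnarray*}
Setting $W_1:=\xi_1^2/\sigma_1^2\eqd\Gamma(1/2,1/2)$ and $U:=(\xi_1/|\xi_1|)Z$, the sign of $\xi_1$ is independent of $|\xi_1|$ and of $Z$, so $U\eqd\N(0,1)$ is independent of $W_1$ and $\xi_1 Z/\sigma_1 = \sqrt{W_1}\,U$. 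This is exactly \eqref{repr} with $r=1$, $\theta=\rho\sigma_1\sigma_2$, $\sigma=\sqrt{1-\rho^2}\,\sigma_1\sigma_2$, $\mu=0$; equivalently one may match the characteristic function of $\xi_1\xi_2$ to \eqref{chVG}.

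For part (ii), the summands are i.i.d.\ copies of a $\VG(1,\rho\sigma_1\sigma_2,\sqrt{1-\rho^2}\,\sigma_1\sigma_2,0)$ variable by part (i), so the convolution property (i) of the VG family (which adds the shape parameter $r$ when $\theta,\sigma$ are shared) yields the $\VG(n,\cdot,\cdot,0)$ law, and \eqref{repr} with $r=n$ gives the stated mixture representation. The same conclusion follows directly by applying the decomposition above to each $j$: $\sum_j\xi_{1j}^2/\sigma_1^2\eqd W_n\eqd\Gamma(n/2,1/2)$, while conditionally on $(\xi_{1j})_j$ the sum $\sum_j\xi_{1j}Z_j$ is $\N(0,\sigma_1^2 W_n)$, so its normalization by $\sigma_1\sqrt{W_n}$ is a standard normal independent of $W_n$.

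Part (iii) is the main obstacle and where the real computation sits. I would regress every first coordinate on the common $\xi_2$, writing
\begin{eqnarray*}
	\xi_{1j}^{(k)} &=& \rho^{(k)}\frac{\sigma_1^{(k)}}{\sigma_2}\,\xi_{2j} + \sigma_1^{(k)}\sqrt{1-(\rho^{(k)})^2}\,\eta_j^{(k)},
\end{eqnarray*}
where the standardized residuals $(\eta_j^{(k)})_k$ are jointly Gaussian, independent of $\xi_{2j}$, and i.i.d.\ across $j$, so that
\begin{eqnarray*}
	\sum_{j=1}^n \xi_{1j}^{(k)}\xi_{2j} &=& \rho^{(k)}\sigma_1^{(k)}\sigma_2\,W_n + \sigma_1^{(k)}\sqrt{1-(\rho^{(k)})^2}\,\sigma_2\sqrt{W_n}\,U_k,
\end{eqnarray*}
with the common $W_n:=\sum_j\xi_{2j}^2/\sigma_2^2\eqd\Gamma(n/2,1/2)$ and $U_k:=\sum_j\eta_j^{(k)}\xi_{2j}/(\sigma_2\sqrt{W_n})$. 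The crux is to show $(U_1,\dots,U_p)'$ is jointly Gaussian, independent of $W_n$, with covariance \eqref{UkUl}. Conditioning on $(\xi_{2j})_j$, each $U_k$ is centered Gaussian, and the independence of the $\eta$'s from $\xi_2$ together with their i.i.d.-across-$j$ structure kills the off-diagonal $j\neq j'$ terms, collapsing the conditional covariance to
\begin{eqnarray*}
	\Cov\big(U_k,U_l\mid (\xi_{2j})_j\big) &=& \frac{1}{\sum_j\xi_{2j}^2}\sum_{j}\xi_{2j}^2\,\Corr(\eta^{(k)},\eta^{(l)}) \ =\ \Corr(\eta^{(k)},\eta^{(l)}),
\end{eqnarray*}
which is free of the conditioning; hence $(U_1,\dots,U_p)'$ is independent of $W_n$ and $\N(0,\Sigma_U)$ with $\Sigma_U=(\Corr(\eta^{(k)},\eta^{(l)}))$. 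A short residual-covariance computation then gives $\Cov(\eta^{(k)},\eta^{(l)}) = (\rho^{(kl)}-\rho^{(k)}\rho^{(l)})/(\sqrt{1-(\rho^{(k)})^2}\sqrt{1-(\rho^{(l)})^2})$, matching \eqref{UkUl} and completing the proof.
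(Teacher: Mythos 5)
Your proposal is correct, but it proves the proposition by a genuinely different route than the paper. For parts (i)--(ii) the paper simply cites the literature, whereas you derive them from scratch via the Gaussian regression decomposition; the two points your argument hinges on --- that $\mathrm{sign}(\xi_1)Z$ is standard normal and independent of $\xi_1^2$, and that the conditional law of the normalized Gaussian sum does not depend on the conditioning --- are both sound. For part (iii) the paper goes through Lemma \ref{VG-lem}, a Cram\'er--Wold argument: for each fixed $(t_1,\dots,t_p)$ the combination $\big(\sum_k t_k\xi_1^{(k)}\big)\xi_2$ is a product of two jointly Gaussian variables, hence variance-gamma by part (i), and the matching linear combination of the claimed representation is shown via \eqref{repr} to have the same VG law, so all one-dimensional projections agree. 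You instead regress every $\xi_1^{(k)}$ on the common $\xi_2$ and obtain the representation as an almost-sure identity on the original probability space, with $W_n=\sum_j\xi_{2j}^2/\sigma_2^2$ and $U_k=\sum_j\eta_j^{(k)}\xi_{2j}/(\sigma_2\sqrt{W_n})$; conditioning on $(\xi_{2j})_j$ shows the conditional law of $(U_1,\dots,U_p)'$ is $\N(0,\Sigma_U)$ independently of the conditioning value, which yields both the Gaussianity with covariance \eqref{UkUl} and the independence from $W_n$ in one stroke, and the residual-covariance computation does give $\Cov(\eta^{(k)},\eta^{(l)})=(\rho^{(kl)}-\rho^{(k)}\rho^{(l)})/\big(\sqrt{1-(\rho^{(k)})^2}\sqrt{1-(\rho^{(l)})^2}\big)$. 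Your construction is more explicit --- it produces an actual coupling rather than only an equality of distributions, and it bypasses the VG calculus entirely in part (iii) --- while the paper's route is shorter once the VG additivity and representation \eqref{repr} are taken as given, concentrating all computation in the single scalar identity \eqref{vd}. One minor caveat applying to both arguments: the decomposition and formula \eqref{UkUl} implicitly require $|\rho^{(k)}|<1$ for every $k$, which is worth stating.
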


\begin{proof} The statements in (i), (ii) are well-known, see e.g. \cite{RePEc:bla:stanee:v:73:y:2019:i:2:p:176-179}.
	The proof of part (iii) follows from Lemma \ref{VG-lem}.
\end{proof}
\begin{Lemma}\label{VG-lem}
	Assume that $(\xi_1^{(1)},\dots,\xi_1^{(p)}, \xi_2)'$ has distribution $\N(0,\Sigma^{(p)})$ and let $\rho^{(kl)}:=\Corr (\xi^{(k)}_1,\xi^{(l)}_1)$, $\rho^{(k)}:=
	\Corr(\xi_{1}^{(k)}, \xi_2)$, $(\sigma_1^{(k)})^2:=\Var(\xi_1^{(k)})$, $\sigma_2^2:=\Var(\xi_2)$, $k,l=1,\dots,p$. Then
	\begin{eqnarray*}
		\left(
		\begin{array}{c}
			\xi^{(1)}_1\xi_2  \\
			\vdots \\
			\xi_1^{(p)}\xi_2
		\end{array}
		\right) &\eqd& \left(\begin{array}{c}
			\sigma_1^{(1)}\sigma_2\big(\rho^{(1)} W_1+\sqrt{1-(\rho^{(1)})^2}\sqrt{W_1} U_1\big) \\
			\vdots \\
			\sigma_1^{(p)}\sigma_2\big(\rho^{(p)} W_1+\sqrt{1-(\rho^{(p)})^2}\sqrt{W_1} U_p\big)
		\end{array}
		\right),
	\end{eqnarray*}
	where $W_1\stackrel{d}{=} \Gamma(1/2,1/2)$, $(U_1,\dots,U_p)'$ is, independent of $W_1$, zero mean normal vector with covariances in \eqref{UkUl}.
\end{Lemma}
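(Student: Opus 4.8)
The plan is to prove the representation by conditioning on $\xi_2$, which reduces the joint law of the product vector to an explicit expression affine in $\xi_2^2$ whose coefficients can be read off directly. I would begin by regressing each $\xi_1^{(k)}$ on $\xi_2$: since $(\xi_1^{(1)},\dots,\xi_1^{(p)},\xi_2)'$ is jointly Gaussian, one can write $\xi_1^{(k)} = a_k \xi_2 + \eta_k$ with $a_k = \Cov(\xi_1^{(k)},\xi_2)/\Var(\xi_2) = \rho^{(k)}\sigma_1^{(k)}/\sigma_2$, where the residual vector $(\eta_1,\dots,\eta_p)'$ is zero-mean Gaussian and independent of $\xi_2$. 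Multiplying through by $\xi_2$ gives the clean decomposition $\xi_1^{(k)}\xi_2 = a_k \xi_2^2 + \eta_k \xi_2$, which isolates a ``squared'' part and a ``cross'' part.

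Next I would substitute $\xi_2 = \sigma_2 Z$ with $Z \eqd \N(0,1)$ and set $W_1 := Z^2 \eqd \Gamma(1/2,1/2)$; then $a_k \xi_2^2 = \rho^{(k)}\sigma_1^{(k)}\sigma_2\, W_1$, which already matches the first term in the claimed representation. Writing $\eta_k = \sigma_1^{(k)}\sqrt{1-(\rho^{(k)})^2}\,U_k$ defines a unit-variance Gaussian vector $(U_1,\dots,U_p)'$, independent of $Z$ (hence of $W_1$); a short covariance computation using $\Cov(\xi_1^{(k)},\xi_1^{(l)}) = \rho^{(kl)}\sigma_1^{(k)}\sigma_1^{(l)}$ and the formula for $a_k$ yields $\Cov(\eta_k,\eta_l) = \sigma_1^{(k)}\sigma_1^{(l)}(\rho^{(kl)}-\rho^{(k)}\rho^{(l)})$, so that $\Cov(U_k,U_l)$ equals exactly the expression in \eqref{UkUl}. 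At this stage the cross part reads $\eta_k\xi_2 = \sigma_1^{(k)}\sqrt{1-(\rho^{(k)})^2}\,\sigma_2\,U_k Z$.

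The one genuinely subtle point, and the step I expect to require the most care, is that the target representation carries $\sqrt{W_1}=|Z|$ rather than the signed $Z$ appearing in $U_k Z$. I would resolve this by a symmetrization argument: writing $Z = S R$ with $S = \mathrm{sign}(Z)$ and $R = |Z| = \sqrt{W_1}$, the sign $S$ takes values $\pm 1$ with equal probability and is independent of $(R,U_1,\dots,U_p)$, while the zero-mean Gaussian vector $(U_1,\dots,U_p)'$ is symmetric, i.e.\ $-(U_1,\dots,U_p)'\eqd(U_1,\dots,U_p)'$. Consequently $(R,\,S(U_1,\dots,U_p)')\eqd(R,\,(U_1,\dots,U_p)')$, and since $W_1=R^2$ is a function of $R$, the vector $(U_k Z)_k = R\,S\,(U_k)_k$ has the same joint law as $\sqrt{W_1}\,(U_k)_k$, jointly with $W_1$. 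Substituting this equality in distribution into the decomposition above recovers, component by component and jointly in $k$, the stated representation with $W_1\eqd\Gamma(1/2,1/2)$ independent of $(U_1,\dots,U_p)'$, completing the proof. I note that Proposition~\ref{pr:VG}(iii) then follows by applying property~(i) of the variance-gamma distribution to sum the $n$ i.i.d.\ copies, which upgrades $W_1$ to $W_n\eqd\Gamma(n/2,1/2)$.
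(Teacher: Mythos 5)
Your proposal is correct, but it proves the lemma by a genuinely different route than the paper. The paper argues via the Cram\'er--Wold device: it fixes $(t_1,\dots,t_p)$, observes that $\big(\sum_k t_k\xi_1^{(k)}\big)\xi_2$ is a product of two correlated Gaussians and hence, by Proposition~\ref{pr:VG}(i), has an explicit $\VG(1,\cdot,\cdot,0)$ law, and then checks that the same linear combination of the right-hand side has the identical VG law through the representation \eqref{repr}; equality of all one-dimensional projections gives equality of the vectors in distribution. You instead construct the representation directly: the Gaussian regression $\xi_1^{(k)}=a_k\xi_2+\eta_k$ with $a_k=\rho^{(k)}\sigma_1^{(k)}/\sigma_2$ and $(\eta_k)_k$ independent of $\xi_2$ yields $\xi_1^{(k)}\xi_2=\rho^{(k)}\sigma_1^{(k)}\sigma_2 Z^2+\sigma_1^{(k)}\sqrt{1-(\rho^{(k)})^2}\,\sigma_2\,U_kZ$, your covariance computation for $\Cov(\eta_k,\eta_l)$ correctly reproduces \eqref{UkUl}, and the symmetrization step $(R,S(U_k)_k)\eqd(R,(U_k)_k)$ (valid because $S=\mathrm{sign}(Z)$ is independent of $R=|Z|$ and of the symmetric vector $(U_k)_k$) legitimately replaces the signed $Z$ by $\sqrt{W_1}$ jointly with $W_1=R^2$. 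This is the one delicate point and you handle it correctly. What each approach buys: the paper's proof is shorter once Proposition~\ref{pr:VG}(i) is available and sidesteps any sign issue, but it only matches laws abstractly; yours is more elementary and constructive, exhibiting $W_1$ and $(U_k)_k$ on (essentially) the same probability space and explaining where the representation comes from. One caveat on your closing remark: passing from the lemma to Proposition~\ref{pr:VG}(iii) by ``applying property (i) of the VG distribution'' is not purely componentwise, since VG additivity is a one-dimensional statement --- one still needs a Cram\'er--Wold step (or an analogous joint argument) to upgrade the sum of $n$ i.i.d.\ copies of the \emph{vector} to the $W_n$ representation; but this concerns the proposition, not the lemma under review.
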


\noindent {\sc Proof.} It suffices to prove that for any $(t_1,\dots,t_p)\in \R^p$ it holds
\begin{eqnarray}\label{vd}
	\Big(\sum_{k=1}^p t_k \xi^{(k)}_1\Big)\xi_2 &\eqd& \sigma_2\sum_{k=1}^p t_k \sigma_1^{(k)} \big(\rho^{(k)} W_1 + \sqrt{1-(\rho^{(k)})^2}\sqrt{W_1}U_k\big).
\end{eqnarray}
Since
\begin{eqnarray*}
	\sum_{k=1}^p t_k \xi^{(k)}_1 &\stackrel{d}{=}& \N\Big(0, \sum_{k,l=1}^p t_k t_l \rho^{(kl)}\sigma_1^{(k)}\sigma_1^{(l)}\Big),\ \ \xi_2\ \stackrel{d}{=}\ \N(0,\sigma_2^2),
\end{eqnarray*}
by Proposition \ref{pr:VG}(i) we obtain that
\begin{eqnarray}\label{VGd}
	\bigg(\sum_{k=1}^p t_k \xi^{(k)}_1\bigg)\xi_2 &\stackrel{d}{=}& \VG\Bigg(1, \sigma_2\sum_{k=1}^p t_k \rho^{(k)} \sigma_1^{(k)},\sigma_2\sqrt{\sum_{k,l=1}^p t_k t_l \sigma_1^{(k)} \sigma_1^{(l)}
		(\rho^{(kl)}-\rho^{(k)} \rho^{(l)} )},0\Bigg). \nonumber\\
\end{eqnarray}
For the right-hand side of \eqref{vd} write
\begin{eqnarray*}
	&& \hskip-1cm\sigma_2\sum_{k=1}^p t_k \sigma_1^{(k)} \big(\rho^{(k)} W_1 + \sqrt{1-(\rho^{(k)})^2}\sqrt{W_1}U_k\big) \\
	&=&\bigg(\sigma_2\sum_{k=1}^p t_k \sigma_1^{(k)} \rho^{(k)}\bigg) W_1 +   \bigg(\sigma_2 \sum_{k=1}^p t_k \sigma_1^{(k)} \sqrt{1-(\rho^{(k)})^2}U_k\bigg)\sqrt{W_1}.
\end{eqnarray*}
Here, by \eqref{UkUl},
\begin{eqnarray*}
	\sigma_2 \sum_{k=1}^p t_k \sigma_1^{(k)} \sqrt{1-(\rho^{(k)})^2} U_k
	&\eqd&
	\sigma_2 \bigg(\sum_{k,l=1}^p t_k t_l \sigma_1^{(k)}\sigma_1^{(l)}
	\sqrt{1-(\rho^{(k)})^2} \sqrt{1-(\rho^{(l)})^2} \E (U_k U_l) \bigg)^{1/2} U_1\\
	&=&
	\sigma_2 \bigg(\sum_{k,l=1}^p t_k t_l \sigma_1^{(k)}\sigma_1^{(l)}
	(\rho^{(kl)}-\rho^{(k)}\rho^{(l)})\bigg)^{1/2} U_1.
\end{eqnarray*}
Note that $U_1\stackrel{d}{=} \N(0,1)$. So that,
\begin{eqnarray*}
	&&\hskip-.8cm\sigma_2\sum_{k=1}^p t_k \sigma_1^{(k)} \big(\rho^{(k)} W_1 + \sqrt{1-(\rho^{(k)})^2}\sqrt{W_1}U_k\big) \\
	&\eqd&
	\bigg(\sigma_2\sum_{k=1}^p t_k \sigma_1^{(k)} \rho^{(k)}\bigg) W_1 +
	\sigma_2 \bigg(\sum_{k,l=1}^p t_k t_l \sigma_1^{(k)}\sigma_1^{(l)}
	(\rho^{(kl)}-\rho^{(k)}\rho^{(l)})\bigg)^{1/2}\sqrt{W_1} U_1,
\end{eqnarray*}
which, by representation \eqref{repr}, has the same VG distribution as that in
\eqref{VGd}. This proves \eqref{vd}. \hfill $\Box$

\section{Some auxiliary lemmas}
\label{sec:auxiliary}

\noindent In this section we establish some auxiliary results that will be used in the proofs of
Theorems~\ref{th:wider} and \ref{th:main}. Here and throughout the paper we remove the upper indices when working with triangular schemes of random variables, e.g., $(V_{1}, \ldots, V_{p}) \equiv (V_{1}^{(p)}, \ldots, V_{p}^{(p)})$, whenever it is clear from the context.

\begin{Lemma}\label{lemma:U1+}
	Let $V = (V_1,\ldots, V_p)' \stackrel{d}{=} \mathcal{N}_p(0,\Sigma_{V}^{(p)})$, where $\Sigma_{V}^{(p)}$ is positive definite covariance matrix and $\operatorname{tr}((\Sigma_{V}^{(p)})^2) = o(p^2)$, $p\to\infty$. Then
	\begin{eqnarray}\label{sd_1}
		\frac{1}{p}\sum_{k=1}^{p}\big(V_k^2 - \mathbb{E}V_{k}^2 \big) &\convP& 0 \ \ as\ \ p \to \infty.
	\end{eqnarray}
	If, in addition, $p^{-1}\operatorname{tr}(\Sigma_{V}^{(p)}) \to 1$, then
	\begin{eqnarray}\label{sd_2}
		\frac{1}{p}\sum_{k=1}^{p}V_k^2 &\convP& 1 \ \ as\ \ p \to \infty.
	\end{eqnarray}
\end{Lemma}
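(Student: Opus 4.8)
The plan is to establish \eqref{sd_1} by a second-moment (variance) argument and then deduce \eqref{sd_2} by adding back the mean. First I would compute the variance of $S_p := \sum_{k=1}^{p}(V_k^2 - \E V_k^2)$. Since $V$ is a centered Gaussian vector, the Isserlis (Wick) formula applies: for jointly Gaussian zero-mean variables one has $\E[V_k^2 V_l^2] = \E[V_k^2]\E[V_l^2] + 2(\E[V_k V_l])^2$, so that $\Cov(V_k^2, V_l^2) = 2(\E[V_k V_l])^2 = 2\big((\Sigma_{V}^{(p)})_{kl}\big)^2$. Summing over all $k,l$ then gives the compact identity $\Var(S_p) = 2\sum_{k,l=1}^{p}\big((\Sigma_{V}^{(p)})_{kl}\big)^2 = 2\operatorname{tr}\big((\Sigma_{V}^{(p)})^2\big)$, i.e. the variance is governed exactly by the squared Hilbert–Schmidt norm of the covariance matrix.

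Next I would divide by $p^2$ and invoke the hypothesis $\operatorname{tr}((\Sigma_{V}^{(p)})^2) = o(p^2)$ to obtain $\Var(p^{-1}S_p) = 2 p^{-2}\operatorname{tr}\big((\Sigma_{V}^{(p)})^2\big) = o(1)$. Since $\E[p^{-1}S_p] = 0$ by construction, Chebyshev's inequality immediately yields $p^{-1}S_p \convP 0$, which is precisely \eqref{sd_1}.

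For \eqref{sd_2} I would decompose $\frac{1}{p}\sum_{k=1}^{p} V_k^2 = \frac{1}{p}S_p + \frac{1}{p}\sum_{k=1}^{p}\E V_k^2$. The first term tends to $0$ in probability by \eqref{sd_1}, while the second term equals $p^{-1}\sum_{k=1}^{p}(\Sigma_{V}^{(p)})_{kk} = p^{-1}\operatorname{tr}(\Sigma_{V}^{(p)}) \to 1$ by the additional assumption. Combining the two convergences (the deterministic limit plus convergence in probability to $0$) gives the claim.

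The argument is essentially routine once the covariance identity is in hand; there is no genuine obstacle. The only point worth flagging is that the off-diagonal covariances $\Cov(V_k^2, V_l^2)$ for $k \neq l$ do not vanish under dependence, but they are controlled in aggregate precisely by $\operatorname{tr}((\Sigma_{V}^{(p)})^2)$, which is the quantity the hypothesis bounds — this is the structural reason the weak law holds without requiring independence of the $V_k$. Positive definiteness of $\Sigma_{V}^{(p)}$ plays no role beyond guaranteeing that $V$ is a bona fide Gaussian vector.
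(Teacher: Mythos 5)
Your proof is correct and takes essentially the same route as the paper's: both reduce the claim to the identity $\Var\big(\sum_{k=1}^p V_k^2\big) = 2\operatorname{tr}\big((\Sigma_V^{(p)})^2\big)$ and then apply Chebyshev's inequality, with \eqref{sd_2} following by adding back $p^{-1}\operatorname{tr}(\Sigma_V^{(p)})\to 1$. The only difference is cosmetic: the paper derives the variance by diagonalizing $\Sigma_V^{(p)}$ and writing $V'V \eqd \sum_j \lambda_j^{(p)}\tilde Z_j^2$, whereas you get it directly from the Wick identity $\Cov(V_k^2,V_l^2)=2\big((\Sigma_V^{(p)})_{kl}\big)^2$ -- the two computations are equivalent.
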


\begin{proof}
	Due to the Spectral Theorem, we have
	\begin{eqnarray}\label{spectral}
		V'V & =& \sum_{k=1}^{p} V_k^2 \ \stackrel{d}{=}\ \sum_{j=1}^p \lambda_j^{(p)} \tilde Z_j^2,
	\end{eqnarray}
	where $\tilde Z_j$ are i.i.d.\ standard normal variables and $\lambda_1^{(p)}, \ldots, \lambda_p^{(p)}$ are the eigenvalues of $\Sigma_{V}^{(p)}$.
	Observe from \eqref{spectral} that
	\begin{eqnarray} \label{spectral2}
		\mathbb{E}V'V & =&  \sum_{j=1}^{p} \lambda_j^{(p)} \ =\ \operatorname{tr}(\Sigma_{V}^{(p)}), \label{eq:evv} \\
		\operatorname{Var}(V'V) &=& \operatorname{Var}
		\Big(\sum_{j=1}^{p} \lambda_j^{(p)} \tilde Z_j^2\Big) \ =\ 2\sum_{j=1}^p(\lambda_j^{(p)})^2 \ =\ 2 \operatorname{tr}((\Sigma_{V}^{(p)})^2). \label{spectral10}
	\end{eqnarray}
	Thus, by (\ref{spectral2})--(\ref{spectral10}),
	for any $\epsilon > 0$
	\begin{eqnarray*}
		\mathbb{P}\Big(\Big|\frac{1}{p}\big( V'V-\mathbb{E}V'V \big)\Big|>\epsilon \Big) &\leq& \frac{\operatorname{Var}(V'V)}{p^2 \epsilon^2} \ \to\ 0, \ \ p\to\infty,
	\end{eqnarray*}
	and the relation in \eqref{sd_1} follows due to assumption ${\rm tr}((\Sigma_{V}^{(p)})^2)~=~o(p^2)$.
	Finally, if $p^{-1}\operatorname{tr}(\Sigma_{V}^{(p)})~\to~1$, by \eqref{eq:evv}, the result \eqref{sd_1} leads to \eqref{sd_2}.
\end{proof}

\begin{remark} \label{remark:trace}
	The assumption on matrix $\Sigma_V=\Sigma^{(p)}_V$ in Lemma~\ref{lemma:U1+}, requiring that $\operatorname{tr}(\Sigma_V^2) = o(p^2)$, is not overly restrictive: assume, for example, that $\Sigma_V = (\sigma^{(i,j)})$ is any KMS type covariance matrix, as in (\ref{KMS}). Then it is straightforward to see that
	\begin{eqnarray*}
		\operatorname{tr}(\Sigma_V^2)&=&\sum_{i,j=1}^p (\sigma^{(i,j)})^2  \ =\
		\sum_{i,j=1}^p \rho^{2|i-j|} \\
		&=& \sum_{|m|<p} (p-|m|) \rho^{2|m|} \ \le\ p\sum_{|m|<p}|m|\rho^{2|m|} \ =\ {\cal O}(p).
	\end{eqnarray*}
\end{remark}

\begin{Lemma} \label{lemma:quadratic}
	Assume that $\tilde Z_1, \tilde Z_2, \ldots$ are i.i.d. $\mathcal{N}(0, 1)$ random variables. For any $p\in {\mathbb N}$ define
	\begin{eqnarray}
		\zeta_j^{(p)} &:=& \nu_j^{(p)} (\tilde Z_j^2 -1) + \gamma_j^{(p)} \sqrt{p} \tilde Z_j, ~~~ j = 1, \ldots, p,
	\end{eqnarray}
	where $\nu_j^{(p)}, ~j = 1, \ldots, p$, are positive scalars, and $\gamma_j^{(p)}$, $j = 1, \ldots, p$, are real scalars, such that
	\begin{eqnarray}\label{eq:assumption1}
		\sum_{j=1}^{p} (\nu_j^{(p)})^3&=& o\bigg(\bigg(\sum_{j=1}^p \Var\big(\zeta_j^{(p)}\big)\bigg)^{3/2}\bigg),\\
		p\sum_{j=1}^{p}(\gamma_j^{(p)})^2 \nu_{j}^{(p)} &=& o\bigg(\bigg(\sum_{j=1}^p \Var\big(\zeta_j^{(p)}\big)\bigg)^{3/2}\bigg)
	\end{eqnarray}
	with $\Var(\zeta_j^{(p)})= 2(\nu_j^{(p)})^2 + p (\gamma_j^{(p)})^2. $
	Then, as $p \to \infty$,
	\begin{eqnarray}
		\frac{\sum_{j=1}^{p}\zeta_j^{(p)}}{\sqrt{\sum_{j=1}^{p}\Var(\zeta_j^{(p)})}} & \convd& \mathcal{N}(0,1). \label{eq:zeta1}
	\end{eqnarray}
\end{Lemma}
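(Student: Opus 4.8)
The plan is to prove \eqref{eq:zeta1} by the method of characteristic functions, since (as I explain below) a direct Lyapunov or Lindeberg argument is bound to fail. For fixed $p$ the summands $\zeta_j^{(p)}$ are independent, each has mean zero, and because $\Cov(\tilde Z_j^2-1,\tilde Z_j)=\E\tilde Z_j^3=0$ a one-line computation confirms $\Var(\zeta_j^{(p)})=2(\nu_j^{(p)})^2+p(\gamma_j^{(p)})^2$ as stated. Write $B_p:=\big(\sum_{j=1}^p\Var(\zeta_j^{(p)})\big)^{1/2}$ and $S_p:=\sum_{j=1}^p\zeta_j^{(p)}$. The first thing I would record is the exact characteristic function of a single normalized summand. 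Completing the square in a Gaussian integral gives, for real $a$ with $|a|$ small and real $b$,
\[
\E\exp\!\Big(\i a(\tilde Z^2-1)+\i b\tilde Z\Big)=\frac{\e^{-\i a}}{\sqrt{1-2\i a}}\exp\!\Big(-\frac{b^2}{2(1-2\i a)}\Big),
\]
so that, with $a_j:=t\nu_j^{(p)}/B_p$ and $b_j:=t\gamma_j^{(p)}\sqrt p/B_p$, the characteristic function $\phi_j(t):=\E\exp(\i t\zeta_j^{(p)}/B_p)$ equals the right-hand side evaluated at $(a_j,b_j)$.

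I want to stress why the obvious route does not work: the linear part $\gamma_j^{(p)}\sqrt p\,\tilde Z_j$ contributes an exactly Gaussian piece whose individual variances need not be negligible relative to $B_p^2$ (take all $\nu_j^{(p)}\equiv 0$ and a single nonzero $\gamma_j^{(p)}$), so the Feller and Lindeberg conditions can fail even though the normalized sum is then already exactly $\N(0,1)$. The remedy is to keep the Gaussian variable $b_j$ \emph{exact} and to expand only in the chi-square variable $a_j$. Here I would first note that the first hypothesis forces $\max_j|a_j|\to 0$: indeed $\max_j\nu_j^{(p)}=(\max_j(\nu_j^{(p)})^3)^{1/3}\le(\sum_j(\nu_j^{(p)})^3)^{1/3}=o(B_p)$. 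Then, using the principal branch of the logarithm (legitimate since $1-2\i a_j$ stays near $1$), I would split
\[
\log\phi_j(t)+a_j^2+\tfrac12 b_j^2=\Big(-\i a_j-\tfrac12\operatorname{Log}(1-2\i a_j)+a_j^2\Big)+\frac{b_j^2}{2}\Big(1-\frac{1}{1-2\i a_j}\Big)=:g(a_j)+h(a_j,b_j).
\]

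The core of the argument is then to bound these two error terms and to match them with the two hypotheses. A Taylor expansion gives $|g(a)|\le C|a|^3$ for $|a|\le 1/4$, while $h(a,b)=-\i b^2a/(1-2\i a)$ obeys $|h(a,b)|\le b^2|a|$. Summing, and using $\nu_j^{(p)}>0$ so that $|a_j|=|t|\nu_j^{(p)}/B_p$,
\[
\sum_{j=1}^p|g(a_j)|\le C\frac{|t|^3}{B_p^3}\sum_{j=1}^p(\nu_j^{(p)})^3,\qquad \sum_{j=1}^p|h(a_j,b_j)|\le\frac{|t|^3}{B_p^3}\,p\sum_{j=1}^p(\gamma_j^{(p)})^2\nu_j^{(p)},
\]
and the two displayed assumptions of the lemma are exactly what make both right-hand sides $o(1)$. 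Since the normalization yields $\sum_{j=1}^p(a_j^2+\tfrac12 b_j^2)=t^2/2$, I obtain $\sum_{j=1}^p\log\phi_j(t)\to-t^2/2$, hence $\E\exp(\i tS_p/B_p)=\prod_{j=1}^p\phi_j(t)\to\e^{-t^2/2}$ for every $t\in\R$, and Lévy's continuity theorem gives \eqref{eq:zeta1}.

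I expect the only genuinely delicate point to be the one already highlighted—recognizing that the Gaussian component must be treated exactly rather than through a moment condition—after which the two hypotheses fall out as the precise requirements killing the cubic ($\sim a^3$) and mixed ($\sim b^2a$) remainders; the remaining steps (the Gaussian integral, the elementary bounds on $g$ and $h$, and $\max_j|a_j|\to0$) are routine.
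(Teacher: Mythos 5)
Your proof is correct, and it takes a genuinely different route from the paper's. The paper proves the lemma by the method of cumulants: it completes the square to identify a noncentral chi-squared variable, writes down the moment generating function of each $\zeta_j^{(p)}$, expands the cumulant generating function of the normalized sum as a power series, and shows that every cumulant $\varkappa_k^{(p)}$ with $k\ge 3$ tends to zero --- the two hypotheses give the case $k=3$ directly, and an induction on $k$ (bounding $\sum_j(\nu_j^{(p)})^{k+1}$ by $(\sum_j(\nu_j^{(p)})^2)^{1/2}\sum_j(\nu_j^{(p)})^k$ and absorbing the extra factor into the normalization) handles all higher orders. You instead work with the exact characteristic function, keep the Gaussian component exact, and control the two remainders $g(a_j)$ and $h(a_j,b_j)$ by third-order bounds that match the two hypotheses one-for-one, concluding via L\'evy's continuity theorem. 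Your computation of $\E\exp(\i a(\tilde Z^2-1)+\i b\tilde Z)$, the identity $h(a,b)=-\i ab^2/(1-2\i a)$ with $|h|\le |a|b^2$, the bound $|g(a)|\le C|a|^3$ (the leading term is $-\tfrac{4}{3}\i a^3$), the observation $\max_j|a_j|\to 0$ from the first hypothesis, and the exact cancellation $\sum_j(a_j^2+\tfrac12 b_j^2)=t^2/2$ all check out. What each approach buys: yours avoids both the induction over cumulant orders and the implicit appeal to moment/cumulant convergence implying weak convergence (L\'evy continuity is the more elementary closing step), and it isolates cleanly why the Gaussian part must be treated exactly rather than through a Lindeberg-type condition --- a point the paper does not discuss; the paper's version, in exchange, yields explicit closed-form expressions for all cumulants of the normalized sum, which makes the structure of the approximation more transparent even if it is longer. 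Both proofs rest on the same exact distributional identity for $\nu(\tilde Z^2-1)+\gamma\sqrt{p}\tilde Z$, so the difference is in execution rather than in the underlying probabilistic input.
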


\begin{proof}
	The proof uses the method of cumulants and is structured as follows: \begin{enumerate}[label={(\rm \roman*)}]
		\item we establish the moment-generating function of $\zeta_j^{(p)}$,  $M_{\zeta_j^{(p)}}(t) := \mathbb{E}\e^{t\zeta_j^{(p)}}$, and   $\log\big( M_{\zeta_j^{(p)}}(t) \big)$;
		
		\item we find $G(t;p)$
		which corresponds to the cumulant generating function of the sum  $\sum_{j=1}^{p}\zeta_j^{(p)}$;
		
		\item we find $K(t;p) := G\bigg(\displaystyle\frac{t}{\sqrt{\sum_{j=1}^{p}(2( \nu_j^{(p)} )^2 + p (\gamma_j^{(p)})^2 ) }};p\bigg)$, which corresponds to the cumulant generating function of the left hand side of \eqref{eq:zeta1};
		\item finally, in order to prove \eqref{eq:zeta1}, we show that the cumulants $\varkappa_j^{(p)}$, generated by $K(t;p)$, satisfy $\varkappa_1^{(p)} = 0$, $\varkappa_2^{(p)} = 1$ and $\varkappa_d^{(p)} \to 0,~ d=3,4,\dots$, as $p \to \infty$.
	\end{enumerate}
	
	\noindent Step 1. First, rewrite
	\begin{eqnarray}
		\zeta_j^{(p)} &=& \nu_j^{(p)} \bigg(\tilde Z_j + \frac{\gamma_j^{(p)} \sqrt{p} }{2 \nu_j^{(p)} } \bigg)^2 - \nu_j^{(p)} - \frac{(\gamma_j^{(p)})^2 p }{4 \nu_j^{(p)}}. \label{eq:zeta2}
	\end{eqnarray}
	Here, $\psi^{(p)}_j := \Big(\tilde Z_j + \frac{\gamma_j^{(p)} \sqrt{p}}{2 \nu_j^{(p)} } \Big)^2$ has a noncentral chi-squared distribution with the following moment-generating function:
	\begin{eqnarray} \label{eq:zeta3}
		M_{\psi^{(p)}_j}(t) &:=& \mathbb{E}\e^{t\psi^{(p)}_j} \ =\
		(1-2t)^{-1/2} \exp\bigg\{\bigg(\frac{\gamma_{j}^{(p)}}{2 \nu_j^{(p)}} \bigg)^2 tp(1-2t)^{-1}\bigg\}, ~~~ |t| < \frac{1}{2}.
	\end{eqnarray}
	Therefore, by \eqref{eq:zeta2} and \eqref{eq:zeta3},
	\begin{eqnarray*}
		M_{\zeta_j^{(p)}}(t) &=& M_{\psi^{(p)}_j}(\nu_j^{(p)} t) \exp \bigg\{-t \nu_j^{(p)} -tp \bigg(\frac{\gamma_{j}^{(p)}}{2 \nu_j^{(p)}} \bigg)^2\bigg\} \\
		&=& \big(1-2\nu_j^{(p)}t\big)^{-\frac{1}{2}}  \exp\bigg\{\frac{(\gamma_j^{(p)})^2 }{4 \nu_j^{(p)} }\, tp \big(1-2\nu_j^{(p)} t \big)^{-1}-t \bigg( \nu_j^{(p)} + \frac{(\gamma_j^{(p)})^2 p }{4\nu_j^{(p)}}\bigg)\bigg\},
	\end{eqnarray*}
	for $|t| < (2 \nu_j^{(p)})^{-1}$, and
	\begin{eqnarray*}
		\log \big( M_{\zeta_j^{(p)}}(t) \big)  &=& \bigg(\frac{\gamma_{j}^{(p)}}{2 \nu_j^{(p)}} \bigg)^2pt \nu_j^{(p)}  \big( 1 - 2 \nu_j^{(p)} t \big)^{-1} - \frac{1}{2}\log\big(1 - 2 \nu_j^{(p)} t \big)
		- t\bigg( \nu_j^{(p)} + \frac{(\gamma_j^{(p)})^2 p }{4 \nu_j^{(p)} }\bigg) \notag  \\
		&& \hskip -1cm =~~ \frac{1}{2}\big( (\gamma_j^{(p)})^2 p  + 2(\nu_j^{(p)})^2\big) t^2 ~+~ \frac{(\gamma_j^{(p)})^2 p }{2 }  \sum_{k=3}^{\infty}t^k 2^{k-2}  (\nu_j^{(p)})^{k-2} + \frac{1}{2}\sum_{k=3}^{\infty} \frac{2^k (\nu_j^{(p)})^k t^k}{k}. \label{eq:zeta4}
	\end{eqnarray*}
	
	\noindent Step 2. Since $\zeta_1^{(p)},\dots,\zeta_j^{(p)}$ are independent, we have that
	\begin{eqnarray*}
		G(t; p) &=& \sum_{j=1}^{p} \log M_{\zeta_j^{(p)}}(t) ~=~
		\frac{t^2}{2} \sum_{j=1}^{p}\big( (\gamma_j^{(p)})^2 p
		+2(\nu_j^{(p)})^2\big)\\ &&  +\ \frac{p}{2}\sum_{k=3}^{\infty}  2^{k-2} t^k \sum_{j=1}^{p} (\gamma_j^{(p)})^2 (\nu_j^{(p)})^{k-2} ~~+~~  \frac{1}{2}\sum_{k=3}^{\infty}\frac{2^k }{k}t^k\sum_{j=1}^{p}(\nu_j^{(p)})^k.
	\end{eqnarray*}
	\noindent Step 3. It is straightforward to see that
	\begin{eqnarray*}
		K(t;p) &=& G\Bigg(\frac{t}{\sqrt{\sum_{j=1}^{p}\big(2(\nu_j^{(p)})^2 + p(\gamma_j^{(p)})^2 \big)}};p\Bigg) \notag \\
		&=&  \frac{t^2}{2} + \frac{1}{2}\sum_{k=3}^{\infty} 2^{k-2} t^k  \frac{ p\sum_{j=1}^{p} (\gamma_j^{(p)})^2 (\nu_j^{(p)})^{k-2} }{\big(\sum_{j=1}^{p}(2(\nu_j^{(p)})^2 + (\gamma_j^{(p)})^2p )\big)^{k/2}}  \label{eq:K1} \\
		&&  + \ \ \frac{1}{2}\sum_{k=3}^{\infty}\frac{2^k }{k}t^k\frac{ \sum_{j=1}^{p}(\nu_j^{(p)})^k }{\big(\sum_{j=1}^{p}(2(\nu_j^{(p)})^2 + (\gamma_j^{(p)})^2 p )\big)^{k/2}} \label{eq:K2} ~=~\sum_{k=1}^{\infty} \varkappa_{k}^{(p)}\frac{t^k}{k!}, \notag
	\end{eqnarray*}
	where  $\varkappa_1^{(p)} = 0$, $\varkappa_2^{(p)} = 1$, and for $k \geq 3$,
	\begin{eqnarray}
		\varkappa_{k}^{(p)}
		&=& \frac{k! 2^{k-3}  p\sum_{j=1}^{p} (\gamma_j^{(p)})^2 (\nu_j^{(p)})^{k-2} + (k-1)! 2^{k-1}\sum_{j=1}^{p}(\nu_j^{(p)})^k}{\big(\sum_{j=1}^{p}(2(\nu_j^{(p)})^2 + (\gamma_j^{(p)})^2p )\big)^{k/2}}.~~~~~~ \label{eq:K}
	\end{eqnarray}
	
	\noindent Step 4. In order to prove that \eqref{eq:zeta1} holds, it remains to show that, as $p \to \infty$, $\varkappa_{d}^{(p)} \to 0$ for all $d \geq 3$.
	By \eqref{eq:K}, it is equivalent to showing that for any fixed $k \geq 3$, as $p \to \infty$,
	\begin{eqnarray}
		\frac{\sum_{j=1}^{p} (\nu_{j}^{(p)})^{k}}{\big( \sum_{j=1}^{p}\big( 2 (\nu_j^{(p)})^2 + (\gamma_j^{(p)})^2 p\big)\big)^{k/2}} &\to& 0, \label{eq:cumulants_eq1_2_1}\\
		\frac{p\sum_{j=1}^{p}(\gamma_j^{(p)})^2 (\nu_{j}^{(p)})^{k-2}}{\big( \sum_{j=1}^{p}\big( 2 (\nu_j^{(p)})^2 + (\gamma_j^{(p)})^2 p\big)\big)^{k/2}} &\to& 0. \label{eq:cumulants_eq1_2_2}
	\end{eqnarray}
	In order to prove \eqref{eq:cumulants_eq1_2_1} we use induction. The case for $k = 3$ holds by assumption. Assuming that \eqref{eq:cumulants_eq1_2_1} holds for fixed $k\ge3$, we have
	\begin{eqnarray*}
		\frac{\sum_{j=1}^{p} (\nu_{j}^{(p)})^{k+1}} {\big(\sum_{j=1}^{p}\big( 2 (\nu_j^{(p)})^2 + (\gamma_j^{(p)})^2 p\big)\big)^{(k+1)/2}} &\leq& \frac{\big(\sum_{j'=1}^{p}(\nu_{j'}^{(p)})^2\big)^{1/2}\sum_{j=1}^{p} (\nu_{j}^{(p)})^{k}}{\big( \sum_{j=1}^{p}\big( 2 (\nu_j^{(p)})^2 + (\gamma_j^{(p)})^2 p\big)\big)^{(k+1)/2}}  \\
		&\leq&~~ \frac{\big(\sum_{j'=1}^{p}\big(2(\nu_{j'}^{(p)})^2 + (\gamma_{j'}^{(p)})^2 p \big)\big)^{1/2}\sum_{j=1}^{p} (\nu_{j}^{(p)})^{k}}{\big( \sum_{j=1}^{p}\big( 2 (\nu_j^{(p)})^2 + (\gamma_j^{(p)})^2 p\big)\big)^{(k+1)/2}} \\
		&=& \frac{\sum_{j=1}^{p} (\nu_{j}^{(p)})^{k}}{\big( \sum_{j=1}^{p}\big( 2 (\nu_j^{(p)})^2 + (\gamma_j^{(p)})^2 p\big)\big)^{k/2}} ~~~\to~~~ 0,
	\end{eqnarray*}
	concluding that \eqref{eq:cumulants_eq1_2_1} holds for all $k\geq3$. The proof for \eqref{eq:cumulants_eq1_2_2} is analogous: the case for $k = 3$ holds by assumption, thus, we repeat the same arguments as with \eqref{eq:cumulants_eq1_2_1} and conclude that \eqref{eq:cumulants_eq1_2_2} holds for all $k\geq 3$.
	This concludes the proof of the lemma.
\end{proof}

\section{Proof of the main results}	\label{sec:proofs}

\noindent
In this section we give the proofs of theorems \ref{th:wider} and \ref{th:main}.
Throughout the proofs, we express corresponding constants in terms of $\kappa_{i,p}$ and $\kappa_{i}$, $i = 1,2,3$, introduced in \eqref{eq:kappa1}--\eqref{eq:kappa3}. Recall that $\kappa_{i,p} \geq 0$, and, by Remark~\ref{remark1}, $\kappa_{i} < \infty$, for $i=1,2,3$.
\begin{proof}[Proof of Theorem \ref{th:wider}]
	Write
	\begin{eqnarray*}
		\|\mathbb{X}'Y\|_{2}^{2} &=& H_1^2+\dots+H_p^2 \ =:\ H,
	\end{eqnarray*}
	where
	\begin{eqnarray*}
		H_k &:=&\sum_{j=1}^n X_{k,j} \bigg(\sum_{l=1}^p \beta_l X_{l,j}+\varepsilon_j\bigg), \ \ k=1,\dots,p.
	\end{eqnarray*}
	Denote $Z_j:=\sum_{l=1}^p \beta_l X_{l,j}+\varepsilon_j$, $j=1,\dots,n$. By covariance structure \eqref{KMS} and $X_{k,j}\stackrel{d}{=} \N(0,1)$, $\varepsilon_j \eqd
	\mathcal{N}(0, \sigma^2_{\varepsilon})$, we have $Z_j\stackrel{d}{=} \N(0,\sigma^2_Z)$, where $\sigma_Z^2=\sum_{l,l'=1}^p \beta_l\beta_{l'}\rho^{|l-l'|} +\sigma^2_\varepsilon$ and $\Cov(X_{k,j},Z_j)=\sum_{l=1}^p \beta_l \rho^{|k-l|}$.
	
	Applying Proposition~\ref{pr:VG}(iii) with $\xi_{1j}^{(k)}= X_{k,j}$, $\xi_{2j}=Z_j$, and  $\sigma^{(k)}_1=1$, $\sigma_{2,p}=\sigma_Z$, $\theta_{k}^{(p)}:= \rho^{(k)}=\sigma_Z^{-1}\sum_{l=1}^p \beta_l \rho^{|k-l|}$, where $\rho^{(kl)} = \rho^{|k-l|}$, we obtain that
	\begin{eqnarray*}
		\|\mathbb{X}'Y\|_2^2 &\eqd& \sigma_{2,p}^2
		\sum_{k=1}^p \Big(\theta_{k}^{(p)}  W_n +\sqrt{1-(\theta_{k}^{(p)})^2} \sqrt{W_n} U_k\Big)^2,
	\end{eqnarray*}
	where $W_n \stackrel{d}{=} \Gamma(n/2, 1/2)$ and $(U_1, \ldots, U_p)' \stackrel{d}{=} \mathcal{N}(0, \Sigma_U^{(p)})$  with $\Sigma_U^{(p)}=(\sigma_U^{(k,l)})$ defined as (see \eqref{UkUl}):
	\begin{eqnarray}\label{UkUl+}
		\sigma_U^{(k,l)}&=&\frac{\rho^{|k-l|}-\theta_k^{(p)}\theta_l^{(p)}}
		{\sqrt{1-(\theta_k^{(p)})^2}\sqrt{1-(\theta_l^{(p)})^2}}, \ \ k,l=1,\dots,p.
	\end{eqnarray}
	
	\noindent By expanding the square we can write
	\begin{eqnarray*}
		\| \mathbb{X}'Y\|_{2}^{2} &\eqd& \sigma_{2,p}^2 \Big((W_n - \mathbb{E}W_n + \mathbb{E}W_n)^2 \sum_{k=1}^p (\theta_{k}^{(p)})^2 + 2  W_n^{3/2} \sum_{k=1}^{p}\theta_{k}^{(p)} \sqrt{1 - (\theta_{k}^{(p)})^2}  U_k \notag\\
		&&+\ (W_n - \mathbb{E}W_n) \sum_{k=1}^{p} \big(1 - (\theta_{k}^{(p)})^2 \big) U_k^2 +\mathbb{E}W_n \sum_{k=1}^{p} \big(1 - (\theta_{k}^{(p)})^2 \big) U_k^2 \Big) \notag.
	\end{eqnarray*}
	By further rearranging the right-hand side, we have
	\begin{eqnarray} \label{eq:final_1}
		\frac{\| \mathbb{X}'Y\|_{2}^{2}}{n^{3/2}} &\eqd&
		I_{1} + I_{2} + I_{3} + I_{4},
	\end{eqnarray}
	where
	\begin{eqnarray}
		I_{1} &:=& \frac{\sigma_{2,p}^2}{n^{3/2}} (W_n - \mathbb{E}W_n)^2 \sum_{k=1}^{p}(\theta_k^{(p)})^2,   \label{eq:th:I1} \\
		I_2 &:=& \frac{\sigma_{2,p}^2}{n^{3/2}}( W_n - \mathbb{E}W_n) \Big(2 \mathbb{E}W_n \sum_{k=1}^{p}(\theta_k^{(p)})^2 + \sum_{k=1}^{p}(1 - (\theta_k^{(p)})^2)U_k^2 \Big), \label{eq:th:I2} \\
		I_3 &:=& \frac{\sigma_{2,p}^2}{n^{3/2}}\, 2 W_n^{3/2} \sum_{k=1}^{p} \theta_k^{(p)} \sqrt{1-(\theta_k^{(p)})^2} U_k +\frac{\sigma_{2,p}^2}{n^{3/2}}\, \mathbb{E}W_n\sum_{k=1}^{p}\big((1 - (\theta_k^{(p)})^2)U_k^2 -1\big), \label{eq:th:I3} \\
		I_4 &:=&  \frac{\sigma_{2,p}^2}{n^{3/2}}\Big(p \mathbb{E}W_n +
		(\mathbb{E}W_n)^2 \sum_{k=1}^{p}(\theta_k^{(p)})^2 \Big).  \label{eq:th:I4}
	\end{eqnarray}
	
	We will show that, as $p, n \to \infty$, $p/n \to c \in (0, \infty)$, the term $I_1 = o_P(1)$, while the terms $I_2$ and $I_3$ are asymptotically normal. More precisely, we will show that $I_2 \convd\mathcal{N}(0, s_{1}^2)$ and $I_{3} \convd \mathcal{N}(0, s_{2}^2)$, where $s_{1}^2$ and $s_{2}^2$ are given by \eqref{eq:s1} and \eqref{eq:s2} below.   Here, since $W_n$ and $(U_1, \ldots, U_p)'$ are mutually independent for each $n$, it follows that $I_2 + I_3 \stackrel{d}{\to} \mathcal{N}(0, s_1^2 + s_2^2)$. Finally, the term $I_4$ defines the mean of the statistic, i.e.
	\begin{eqnarray}
		\frac{\|\mathbb{X}'Y\|^{2}_{2}}{n^{3/2}} - I_4 &\convd& \mathcal{N}(0, s_1^2 + s_2^2). \label{eq:th:main:1} \label{eq:final_2}
	\end{eqnarray}
	Thus, we will conclude by establishing that $I_4 = \sqrt{n}(\kappa_{2,p} +  p n^{-1} (\kappa_{1,p} + \sigma_{\varepsilon}^2))$, while $s_1^2 + s_2^2 = s^2$, as in the statement of the theorem.
	
	First, consider $I_1$ defined in \eqref{eq:th:I1}. We will show that $I_1=o_P(1)$. Denote
	\begin{eqnarray}\label{skapa}
		c_2 &:=& \lim_{p\to\infty}\sum_{k=1}^p (\theta_{k}^{(p)})^2 ~=~ (\kappa_{1} + \sigma_{\varepsilon}^2)^{-1}\kappa_{2}, ~~~~
		\sigma_{2}^{2}  \ :=\ \lim_{p \to \infty} \sigma_{2,p}^2 ~=~ \kappa_{1} + \sigma_{\varepsilon}^2.
	\end{eqnarray}
	It is clear that $c_2 < \infty$ and $\sigma_2^2 < \infty$. Recall that, by
	CLT,
	\begin{eqnarray}\label{WCLT}
		\frac{W_n-\E W_n}{n^{1/2}}&\stackrel{d}{\to}& \mathcal{N}(0, 2).
	\end{eqnarray}
	Therefore,
	\begin{eqnarray} \label{eq:th1_3} \label{eq:final_3}
		I_{1}&=& \mathcal{O}(1) n^{-1/2} \Big(\frac{W_n - \mathbb{E}W_n}{n^{1/2}}\Big)^2  ~~=~~ o(1) \mathcal{O}_{P}(1) ~~=~~ o_P(1).
	\end{eqnarray}
	
	Second, consider $I_2$, defined in \eqref{eq:th:I2}. We will show that \begin{eqnarray} \label{eq:final_4}
		I_2 ~~\stackrel{d}{\to}~~  \mathcal{N}(0,s_1^{2})
	\end{eqnarray} with $s_{1}^2$ given by
	\begin{eqnarray} \label{eq:s1}
		s_1^2 &=& 2\sigma^4_2 (2c_2 + c)^2 ~~=~~ 8\kappa_2^2 + 8 c(\kappa_{1} + \sigma_{\varepsilon}^2)\kappa_2 + 2c^2(\kappa_{1} + \sigma_{\varepsilon}^2)^2.
	\end{eqnarray}
	Rewrite
	\begin{eqnarray}\label{eq:th:I2+}
		I_2 &=& \sigma_{2,p}^2\,\frac{W_n-\mathbb{E}W_n}{n^{1/2}} \Big(\frac{2\mathbb{E}W_n}{n} \sum_{k=1}^{p}(\theta_k^{(p)})^2 +\frac{1}{n} \sum_{k=1}^{p}(1-(\theta_k^{(p)})^2)U_k^2 \Big).~~~~
	\end{eqnarray}
	Applying \eqref{skapa} and \eqref{WCLT} for the outer term of \eqref{eq:th:I2+}, we obtain
	\begin{eqnarray*}
		\sigma_{2,p}^2\,\frac{W_n-\mathbb{E}W_n}{n^{1/2}}&\stackrel{d}{\to}& \mathcal{N}(0, 2\sigma_{2}^4).
	\end{eqnarray*}
	We will show that the inner term of \eqref{eq:th:I2+} approaches $2c_2 + c$. Since
	$\E W_n=n$, by \eqref{skapa} and assumption $p/n\to c$ it suffices to prove the convergence
	\begin{eqnarray}\label{eq:th:I2_3}
		\frac{1}{p}\sum_{k=1}^p (1 - (\theta_k^{(p)})^2) U_k^2  &\convP& 1.
	\end{eqnarray}
	Denote matrix
	\begin{eqnarray}
		A  &:=&\operatorname{diag}\big(1-(\theta_{1}^{(p)})^2, \ \ldots \ , 1-(\theta_{p}^{(p)})^2\big). \label{eq:theta_1}
	\end{eqnarray}
	To prove \eqref{eq:th:I2_3} we apply Lemma~\ref{lemma:U1+} with $V_j~=~\sqrt{1 - (\theta_{j}^{(p)})^2} U_j $, $j~=~1,\ldots,p$,  and $\Sigma_{V}^{(p)} = A^{1/2} \Sigma_{U} A^{1/2}$. Obviously, the conditions of Lemma \ref{lemma:U1+} will hold if   $\operatorname{tr}((A^{1/2} \Sigma_{U} A^{1/2})^2) = \mathcal{O}(p)$ and $p^{-1}\operatorname{tr}(A^{1/2} \Sigma_{U} A^{1/2}) \to 1$, as $p~\to~\infty$.
	Observe, that
	\begin{eqnarray}
		\operatorname{tr}((A^{1/2} \Sigma_{U} A^{1/2})^2)
		&=& \operatorname{tr}((A\Sigma_{U})^2) \notag\\
		&=&\sum_{k=1}^{p}\sum_{k'=1}^{p}(1 - (\theta_{k}^{(p)})^2)(1 - (\theta_{k'}^{(p)})^2) (\sigma_{U}^{(k,k')})^2 \notag\\
		&=& \sum_{k=1}^{p}\sum_{k'=1}^{p} \big( \rho^{2|k-k'|} - 2 \rho^{|k-k'|}\theta_{k}^{(p)}\theta_{k'}^{(p)} + (\theta_{k}^{(p)})^2(\theta_{k'}^{(p)})^2 \big) \notag \\
		&=& \sum_{k=1}^{p}\sum_{k'=1}^{p}\rho^{2|k-k'|} - 2\left(\kappa_{1,p} + \sigma_{\varepsilon}^{2} \right)^{-1}\kappa_{3,p} + \left(\kappa_{1,p} + \sigma_{\varepsilon}^{2} \right)^{-2}\kappa_{2,p}^2 \notag\\
		&=& \sum_{k=1}^{p}\sum_{k'=1}^{p}\rho^{2|k-k'|} + o(p) ~~\sim~~ p \,\frac{1+\rho^2}{1-\rho^2}, \label{eq:trace_1}
	\end{eqnarray}
	since $\kappa_{i} < \infty$, $i=1,2,3$ and $\kappa_{1,p} \geq 0$. Here we used \eqref{skapa} and the observation that
	\begin{eqnarray} \label{eq:kappa_3_from_trace}
		\sum_{k=1}^{p}\sum_{k'=1}^{p}\rho^{|k-k'|}\theta_{k}^{(p)}\theta_{k'}^{(p)} &=& \frac{\kappa_{3,p}}{\kappa_{1,p}+\sigma_{\varepsilon}^2} ~~\to~~ \frac{\kappa_{3}}{\kappa_{1}+\sigma_{\varepsilon}^2}, ~~\text{ as } p \to \infty.
	\end{eqnarray}

	Similarly, we have
	\begin{eqnarray*}
		\frac{1}{p}\operatorname{tr}(A^{1/2} \Sigma_{U} A^{1/2}) &=& 
		\frac{1}{p}\sum_{k=1}^{p} (1 - (\theta_{k}^{(p)})^2 )~~=~~ 1  ~-~ \frac{\kappa_{2,p}}{p(\kappa_{1,p} + \sigma_{\varepsilon}^2)} ~~\to~~ 1,
	\end{eqnarray*}
	since, by Lemma \ref{lemma:kappa_2p_op}, $\kappa_{2,p} = o(p)$, while $\kappa_{1,p} \geq 0$,  $\kappa_{1}< \infty$. This concludes the proof of \eqref{eq:th:I2_3}.

	Next, consider $I_3$, defined by \eqref{eq:th:I3}. We will show that
	\begin{eqnarray} \label{eq:final_5}
		I_3 &\stackrel{d}{\to}& \mathcal{N}(0,s_2^2),
	\end{eqnarray}
	with $s_2^2$ defined in \eqref{eq:s2}. Write
	\begin{eqnarray*}
		I_3
		&=&  \sigma_{2,p}^{2} \bigg( 2\frac{W_n^{3/2}}{n^{3/2}} \mathbf{b}' U +  n^{-1/2} (U'AU - p) \bigg),
	\end{eqnarray*}
	where ~$U ~=~ \left(U_1, \ldots, U_p \right)'$, $A$ is defined by \eqref{eq:theta_1}, and
	\begin{eqnarray}
		\mathbf{b} &=& \Big(\theta_{1}^{(p)} \sqrt{1 - (\theta_{1}^{(p)})^2}, \ldots, \theta_{p}^{(p)} \sqrt{1 - (\theta_{p}^{(p)})^2}\Big)'. \notag
	\end{eqnarray}
	
	Observe that $n^{-3/2}W_n^{3/2} ~\stackrel{P}{\to}~ 1$ due to the Law of Large Numbers. Thus, since $W_n$ and $U$ are independent for any $n$ and $p/n\to c$, it follows that
	\begin{eqnarray}
		I_3&=& \sigma_{2,p}^{2} \Big(2\mathbf{b}' U + \sqrt{\frac{c}{p}} \left(U'AU - p  \right) \Big) + o_{P}(1).  \label{eq:I3_2}
	\end{eqnarray}
	First, we consider the inner term of \eqref{eq:I3_2} and show, that, as $p ~\to~ \infty$,
	\begin{eqnarray}
		2\mathbf{b}' U  ~~+~~  \sqrt{\frac{c}{p}}\, (UAU' - p)     &\convd&  V_2, \label{eq:naujas_3_ref_1}
	\end{eqnarray}
	where  $V_2 \stackrel{d}{=} \mathcal{N}(0,\sigma_2^{-4} s_2^2)$.  Then, \eqref{eq:final_5} readily follows from \eqref{eq:I3_2}.
	
	Recall, that $U ~\stackrel{d}{=}~ \mathcal{N}_{p}(0,\Sigma_U)$, $\Sigma_U > 0$. Further, let $\tilde Z ~\stackrel{d}{=}~ \mathcal{N}_{p}(0, I_p)$. Clearly, one has that $U ~\stackrel{d}{=}~ \Sigma_U^{1/2} \tilde Z$, where $\Sigma_U^{1/2}$ denotes the symmetric square root of $\Sigma_U$.  By the Spectral Theorem, we construct $V := P' \tilde Z$, where $V \stackrel{d}{=} \mathcal{N}_{p}(0,I_p)$ and  $P$ is an orthogonal matrix that diagonalizes $\Sigma_U^{1/2} A \Sigma_U^{1/2}$, such, that $P' \Sigma_U^{1/2}A\Sigma_U^{1/2}P = \Lambda$, with $\Lambda = \operatorname{diag}(\lambda_1^{(p)}, \ldots, \lambda_p^{(p)})$ comprised of the eigenvalues of $\Sigma_U^{1/2}A\Sigma_U^{1/2}$. Then,
	\begin{eqnarray}
		\frac{\sqrt{c}}{\sqrt{p}} \left(U'AU - p  \right) + 2\mathbf{b}' U      &\stackrel{d}{=}& \frac{\sqrt{c}}{\sqrt{p}} \left( V'\Lambda V- p \right) +  2\mathbf{b}' \Sigma_{U}^{1/2}PV \notag \\
		&=& \frac{\sqrt{c}}{\sqrt{p}}\bigg( \sum_{j=1}^{p}\Big(\lambda_j^{(p)} (V_j^2 - 1) + g_j^{(p)} \sqrt{p} V_j \Big) \bigg) \notag \\
		&=:&  \frac{\sqrt{c}}{\sqrt{p}} \sum_{j=1}^{p}\widetilde V_j^{(p)}, \label{eq:Mj}
	\end{eqnarray}
	where  $(g_1^{(p)}, \ldots, g_p^{(p)})= 2c^{-1/2}\mathbf{b}'\Sigma_{U}^{1/2}P$, and
	\begin{eqnarray}
		\widetilde V_j^{(p)}&:=& \lambda_j^{(p)} (V_j^2 - 1) ~+~ g_j^{(p)} \sqrt{p} V_j, ~~~ j = 1, \ldots, p.
	\end{eqnarray}
	Clearly, $\mathbb{E}\widetilde V_j^{(p)} = 0$ and $\mathbb{E}(\widetilde V_j^{(p)})^2 = 2 (\lambda_j^{(p)})^2 + (g_j^{(p)})^2 p $. Therefore, proving the result \eqref{eq:naujas_3_ref_1} is equivalent to showing:
	\begin{eqnarray}
		\frac{\sqrt{c}}{\sqrt{p}} \sum_{j=1}^{p}\widetilde V_j^{(p)} &\stackrel{d}{\to}& \mathcal{N}(0,\sigma_2^{-4} s_2^2), \label{eq:Mj_4}
	\end{eqnarray}
	where
	\begin{align}
		\sigma_{2}^{-4} s_{2}^2 ~=~ c \lim_{p\to\infty} p^{-1}  \sum_{j=1}^{p}\mathbb{E}(\widetilde V_j^{(p)})^2 ~=~ 2 c \lim_{p\to\infty} p^{-1} \sum_{j=1}^{p} (\lambda_j^{(p)})^2 + c \lim_{p \to \infty} \sum_{j=1}^{p}(g_j^{(p)})^2. \label{eq:s_2}
	\end{align}
	
	We prove \eqref{eq:Mj_4} by applying Lemma \ref{lemma:quadratic} with $\nu_j^{(p)} = \lambda_j^{(p)}$ as the eigenvalues of $\Sigma_U^{1/2}A\Sigma_U^{1/2}$ and $\gamma_j^{(p)} = g_j^{(p)}$. By the conditions of Lemma \ref{lemma:quadratic}, we need to show that the following holds
	\begin{eqnarray}
		\sum_{j=1}^{p}(\lambda_j^{(p)})^3 + p\sum_{j=1}^{p}(g_j^{(p)})^2 \lambda_{j}^{(p)}  &=& o\bigg( \bigg( \sum_{j=1}^{p}\big(2(\lambda_j^{(p)})^2 + (g_j^{(p)})^2 p\big)\bigg)^{3/2} \bigg).~~~~~ \label{eq:conditions_1}
	\end{eqnarray}
	First, observe that $p^{-1}\sum_{j=1}^{p} (2(\lambda_j^{(p)})^2 + (g_j^{(p)})^2 p) \to C \in (0, \infty)$.
	Indeed, we have that $\sum_{j=1}^{p}(g_j^{(p)})^2 \to C_{g} \in (0, \infty)$, since
	\begin{eqnarray}			
		\sum_{j=1}^{p}(g_j^{(p)})^2 &=& 4c^{-1}({\bf b}'\Sigma_U^{1/2} P)({\bf b}'\Sigma_U^{1/2} P)' \ =\  4c^{-1}{\bf b}'\Sigma_U {\bf b}\notag\\
		&=&  4c^{-1}\sum_{j=1}^{p}\sum_{j'=1}^{p}\theta_j^{(p)} \theta_{j'}^{(p)} \sqrt{1 - (\theta_j^{(p)})^2}    \sqrt{1 - (\theta_{j'}^{(p)})^2} \sigma_U^{(j,j')}\notag \\
		&=& 4c^{-1}\sum_{j=1}^{p}\sum_{j'=1}^{p}\theta_{j}^{(p)}\theta_{j'}^{(p)} \left(\rho^{|j-j'|}-\theta_{j}^{(p)} \theta_{j'}^{(p)} \right)    \notag \\
		&\to& 4c^{-1}\left(\kappa_{1} + \sigma_{\varepsilon}^2 \right)^{-1} \kappa_3 - 4c^{-1}\left(\kappa_{1} + \sigma_{\varepsilon}^2 \right)^{-2}\kappa_{2}^2   ~~=~~ C_g \label{eq:main_th_gj}
	\end{eqnarray}
	by \eqref{skapa} and \eqref{eq:kappa_3_from_trace}.
	
	Next, by  \eqref{eq:trace_1}, we find that $p^{-1}\sum_{j=1}^{p} (\lambda_j^{(p)})^2 \to C_{\lambda} \in (0,\infty)$. Indeed, by \eqref{eq:trace_1}, we have
	\begin{eqnarray}
		\sum_{j=1}^{p} (\lambda_j^{(p)})^2 &=&\operatorname{tr}{((\Sigma_{U}^{1/2}A \Sigma_U^{1/2})^2)} ~=~ \operatorname{tr}{((\Sigma_{U} A)^2) } \notag \\
		&=& \sum_{j=1}^{p}\sum_{j'=1}^{p}\rho^{2|j-j'|} + o(p) ~~\sim~~ p \,\frac{1+\rho^2}{1-\rho^2}. \label{eq:lemma_42_used}
	\end{eqnarray}
	Thus, by \eqref{eq:main_th_gj} and \eqref{eq:lemma_42_used}, it follows that $p^{-1}\sum_{j=1}^{p} (2c(\lambda_j^{(p)})^2 + (g_j^{(p)})^2 p) \to C \in (0, \infty)$ and condition \eqref{eq:conditions_1} reduces to:
	\begin{eqnarray}
		\sum_{j=1}^{p}(\lambda_j^{(p)})^3 + p\sum_{j=1}^{p}(g_j^{(p)})^2 \lambda_{j}^{(p)} ~~=~~ o(p^{3/2}).	\label{eq:conditions_3}
	\end{eqnarray}

	\noindent We show that \eqref{eq:conditions_3} holds. For the first term of \eqref{eq:conditions_3}, we have
	\begin{eqnarray}
		\sum_{j=1}^{p}(\lambda_j^{(p)})^3 &=& \operatorname{tr}((\Sigma^{1/2}_U A \Sigma^{1/2}_U)^3) = \operatorname{tr}((\Sigma_U A)^3) \notag  \\
		&=& \sum_{i,j,k=1}^{p} \big(1 - (\theta_{i}^{(p)})^2 \big)\big(1 - (\theta_{k}^{(p)})^2 \big)\big(1 - (\theta_{j}^{(p)})^2 \big) \sigma_U^{(i,j)}\sigma_U^{(i,k)}\sigma_U^{(k,j)} \notag  \\
		&=& \sum_{i,j,k=1}^{p}\big(\rho^{|i-j|} + \theta_{i}^{(p)} \theta_{j}^{(p)}  \big)\big(\rho^{|i-k|} + \theta_{i}^{(p)} \theta_{k}^{(p)}\big)\big(\rho^{|k-j|} + \theta_{k}^{(p)} \theta_{j}^{(p)}  \big) \notag \\
		&=& o(p^{3/2}), \label{eq:lambda_3_op}
	\end{eqnarray}
	where the last equality follows from Lemma \ref{lemma:lambda_3}.
	For the second term of \eqref{eq:conditions_3}, observe, that by  H\"older's inequality and \eqref{eq:lambda_3_op},
	\begin{eqnarray*}
		p\sum_{j=1}^{p}(g_j^{(p)})^2 \lambda_{j}^{(p)} &\leq& p \bigg(\sum_{j=1}^{p}|g_j^{(p)}|^3\bigg)^{2/3}\bigg(\sum_{j=1}^{p}(\lambda_{j}^{(p)})^{3}\bigg)^{1/3} \\
		&=& p^{3/2} \mathcal{O}(1)\bigg(\frac{\sum_{j=1}^{p}(\lambda_{j}^{(p)})^{3}}{p^{3/2}}\bigg)^{1/3} ~~=~~ o(p^{3/2}).
	\end{eqnarray*}
	This concludes with \eqref{eq:conditions_3}, ensuring that the conditions of Lemma \ref{lemma:quadratic} hold.
	
	Now we can establish the expression for $s_{2}^2$. By \eqref{skapa}, \eqref{eq:s_2}, \eqref{eq:main_th_gj} and \eqref{eq:lemma_42_used},
	\begin{eqnarray}
		s_{2}^2 &=&  \sigma_{2}^{4}\lim_{p\to\infty}\sum_{j=1}^{p}\big(2p^{-1}c(\lambda_j^{(p)})^2 + c(g_j^{(p)})^2 \big) \notag \\
		&=&  \sigma_{2}^{4} \lim_{p\to\infty} \frac{2c}{p}\bigg( \sum_{k=1}^{p}\sum_{k'=1}^{p}\rho^{2|k-k'|} + o(p) \bigg) + 4\sigma_{2}^{4} \left(\kappa_{1} + \sigma_{\varepsilon}^2 \right)^{-1} \kappa_3 - 4\sigma_{2}^{4}\left(\kappa_{1} + \sigma_{\varepsilon}^2 \right)^{-2}\kappa_{2}^2  \notag \\
		&=&2c\,\frac{1+\rho^2}{1-\rho^2}\, (\kappa_{1} + \sigma_{\varepsilon}^2)^2 +   4 (\kappa_{1} + \sigma_{\varepsilon}^2 )\kappa_3 - 4\kappa_2^{2}.  \label{eq:s2} \label{eq:final_7} \label{eq:Mj_3}
	\end{eqnarray}
	\noindent By \eqref{eq:s1} and \eqref{eq:s2}, recalling that $s^2= s_1^2 + s_2^2$, we have that
	\begin{eqnarray}
		s^2&=&
		4 \kappa_{2}^2 + 4(\kappa_{1} + \sigma_{\varepsilon}^2)\left(2 \kappa_{2} c + \kappa_{3} \right) + 2c (\kappa_{1} + \sigma_{\varepsilon}^2)^2  \Big(c+\frac{1+\rho^2}{1-\rho^2}\Big).~~~~~~~~
		\label{eq:sigma_v}
	\end{eqnarray}
	
	Finally, consider $I_4$, defined by \eqref{eq:th:I4}. Since $\mathbb{E}W_n = n$, we have that
	\begin{eqnarray}
		I_4 &=&
		\frac{\kappa_{1,p}+\sigma^2_\varepsilon}{n^{3/2}}
		\Big(n^2\,\frac{\kappa_{2,p}}{\kappa_{1,p}+\sigma^2_\varepsilon}+pn\Big) ~=~ \sqrt{n}\Big(\kappa_{2,p} +  \frac{p}{n} (\kappa_{1,p} + \sigma_{\varepsilon}^2)\Big). \label{eq:I4}
	\end{eqnarray}
	
	By \eqref{eq:final_1}, having established 4 parts by \eqref{eq:th:I1}--\eqref{eq:th:I4}, we proved that \eqref{eq:final_2} holds due to
	\eqref{eq:final_3}, %I_1
	\eqref{eq:final_4}, %I_2
	\eqref{eq:final_5}, %I_3
	\eqref{eq:final_7}, %I_4
	with terms \eqref{eq:sigma_v} and \eqref{eq:I4}, as in the statement of the theorem, thus concluding the proof.
\end{proof}

Before proceeding with the proof of Theorem \ref{th:main}, we establish the following lemma that ensures $\mathcal{O}(p^{-1/2})$ convergence rate for $\kappa_{1,p}$ and $\kappa_{2,p}$, appearing in Theorem~\ref{th:wider}, under additional restrictions for the parameters $\beta_j$.
\begin{Lemma} \label{lemma:op}
	Assume that $\sum_{j=p+1}^{\infty}\beta_j^2 = o(p^{-1/2})$ and $\sup_{j \geq 1} |\beta_j| j^{\alpha} < \infty$, $\alpha > 1/2$, and $|\rho|<1$. Then,
	\begin{enumerate}[label={(\rm \roman*)}]
		\item $\begin{aligned}
			\kappa_{1} ~=~ \kappa_{1,p} + o(p^{-1/2}),
		\end{aligned}$
		\item $\begin{aligned}
			\kappa_{2} ~=~ \kappa_{2,p} + o(p^{-1/2}).
		\end{aligned}$
	\end{enumerate}
\end{Lemma}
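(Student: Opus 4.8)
The plan is to bound the two differences $\kappa_{1}-\kappa_{1,p}$ and $\kappa_{2}-\kappa_{2,p}$ directly, using three ingredients throughout: the geometric summability $\sum_{m\in\mathbb Z}|\rho|^{|m|}=(1+|\rho|)/(1-|\rho|)<\infty$, the tail assumption $\sum_{j>p}\beta_j^2=o(p^{-1/2})$, and the polynomial bound $\beta_j^2\le Cj^{-2\alpha}$ with $2\alpha>1$ coming from $\sup_j|\beta_j|j^{\alpha}<\infty$. The recurring building block is the estimate $\sum_{l=1}^p\beta_l^2|\rho|^{p-l}=O(p^{-2\alpha})=o(p^{-1/2})$, which I would prove by splitting the sum at $l=p/2$: for $l>p/2$ the factor $\beta_l^2\le C(p/2)^{-2\alpha}$ is small while $\sum|\rho|^{p-l}$ is bounded, and for $l\le p/2$ the factor $|\rho|^{p-l}\le|\rho|^{p/2}$ is exponentially small while $\sum_l\beta_l^2$ converges.

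For part (i), I would write $\kappa_1-\kappa_{1,p}=\sum_{\max(k,l)>p}\beta_k\beta_l\rho^{|k-l|}$ and split it, by symmetry in $k,l$, into the both-tails sum over $k,l>p$ and twice the mixed sum over $k>p$, $l\le p$. Using $|\beta_k\beta_l|\le\tfrac12(\beta_k^2+\beta_l^2)$ together with the geometric row bound $\sum_l|\rho|^{|k-l|}\le C$, the both-tails sum is at most $C\sum_{k>p}\beta_k^2=o(p^{-1/2})$. In the mixed sum the $\beta_k^2$-contribution is again at most $C\sum_{k>p}\beta_k^2$, while the $\beta_l^2$-contribution equals, after summing the geometric series in $k>p$, a constant multiple of $\sum_{l\le p}\beta_l^2|\rho|^{p+1-l}$, which is $O(p^{-2\alpha})$ by the building block above. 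This yields $\kappa_1-\kappa_{1,p}=o(p^{-1/2})$.

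For part (ii), set $a_k=\sum_{l\ge1}\beta_l\rho^{|k-l|}$, $a_k^{(p)}=\sum_{l=1}^p\beta_l\rho^{|k-l|}$ and $d_k=a_k-a_k^{(p)}=\sum_{m>p}\beta_m\rho^{m-k}$, so that $\kappa_2=\sum_{k\ge1}a_k^2$, $\kappa_{2,p}=\sum_{k=1}^p(a_k^{(p)})^2$ and
\[
\kappa_2-\kappa_{2,p}=\sum_{k>p}a_k^2+\sum_{k=1}^p d_k^2+2\sum_{k=1}^p a_k^{(p)}d_k.
\]
The first two sums I would handle exactly as in (i): by Cauchy--Schwarz $a_k^2\le C\sum_l\beta_l^2|\rho|^{|k-l|}$ and $d_k^2\le C\sum_{m>p}\beta_m^2|\rho|^{m-k}$, after which the same tail/building-block estimates give $o(p^{-1/2})$ for each.

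The main obstacle is the cross term $\sum_{k=1}^p a_k^{(p)}d_k$: a direct Cauchy--Schwarz over $k$ yields only $(\kappa_{2,p})^{1/2}\big(\sum_k d_k^2\big)^{1/2}=o(p^{-1/4})$, which is not sharp enough. Instead I would swap the order of summation to obtain $\sum_{l\le p}\sum_{m>p}\beta_l\beta_m T_{l,m}$ with $T_{l,m}=\sum_{k=1}^p\rho^{|k-l|+m-k}$, and evaluate the inner geometric sum in $k$ explicitly (splitting at $k=l$) to get $|T_{l,m}|\le C(1+p-l)|\rho|^{m-l}$. The decisive point is that for a bulk index $l\le p$ and a tail index $m>p$ the exponent $m-l=(m-p)+(p-l)$ is large, which factorizes the resulting bound into a product of $\sum_{m>p}|\beta_m||\rho|^{m-p}$ and $\sum_{l\le p}|\beta_l|(1+p-l)|\rho|^{p-l}$. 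Inserting $|\beta_j|\le Cj^{-\alpha}$, the first factor is $O(p^{-\alpha})$ and the second is also $O(p^{-\alpha})$, the linear weight $1+p-l$ being harmless because $\sum_{j\ge0}(1+j)|\rho|^j<\infty$. Hence the cross term is $O(p^{-2\alpha})=o(p^{-1/2})$, and combining the three pieces proves (ii).
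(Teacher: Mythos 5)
Your argument is correct, and for part (ii) it takes a genuinely different route from the paper's. Part (i) is in substance the paper's proof: the same split into a both-tails block and a mixed block, the same device $2|\beta_k\beta_l|\le\beta_k^2+\beta_l^2$, and the same reduction to the weighted bulk sum $\sum_{l\le p}\beta_l^2|\rho|^{p-l}$. For part (ii) the paper expands $\kappa_2-\kappa_{2,p}$ into five blocks $L_1,\dots,L_5$ according to which of the indices $k,l,l'$ exceed $p$ (display \eqref{LLLLL}) and treats each by explicit geometric summation together with the auxiliary inequalities of Lemma~\ref{lemma:ineqs} and the separate bound $|\sum_{l\le p}\beta_l\rho^{p-l}|=o(p^{-1/4})$ of Lemma~\ref{lemma:bj1}, the latter entering only after being squared to handle $L_4$. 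You instead write $a_k=a_k^{(p)}+d_k$ and reduce everything to three terms; the Cauchy--Schwarz bound $a_k^2\le C\sum_l\beta_l^2|\rho|^{|k-l|}$ then absorbs the paper's $L_2$, $L_4$ and $L_5$ in one stroke (so Lemma~\ref{lemma:bj1} is never needed), and your single building block $\sum_{l\le p}\beta_l^2|\rho|^{p-l}=O(p^{-2\alpha})=o(p^{-1/2})$, obtained by splitting at $l=p/2$ and using the pointwise bound $\beta_l^2\le Cl^{-2\alpha}$, replaces the paper's case-by-case estimates. Your cross term is the analogue of the paper's $L_1+L_2$, and you correctly diagnose that a blind Cauchy--Schwarz over $k$ only gives $o(p^{-1/4})$; the fix --- evaluating $T_{l,m}$ to get the weight $(1+p-l)|\rho|^{m-l}$ and factorizing $|\rho|^{m-l}=|\rho|^{m-p}\,|\rho|^{p-l}$ across the cut at $p$ --- is precisely the mechanism hidden in the paper's $(l'-l)|\rho|^{l'-l}$ bound for $L_1$. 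What your version buys is economy and, in one place, rigor: the proof of Lemma~\ref{lemma:ineqs}(iii) in the supplement only records the bulk piece $\sum_{l\le p}\beta_l^2|\rho|^{p-l+1}$ as $o(1)$, which does not by itself deliver the $o(p^{-1/2})$ rate needed in Lemma~\ref{lemma:op}; your building block supplies the genuine rate. What the paper's version buys is a sharper accounting of where the hypothesis $\sup_j|\beta_j|j^\alpha<\infty$ is truly indispensable, whereas you invoke it throughout. The only cosmetic omission is the identification $\kappa_2=\sum_{k\ge1}a_k^2$, which is immediate from $\sum_k a_k^2\le C\sum_l\beta_l^2<\infty$ together with your three estimates.
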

\begin{proof}
	For the proof see Appendix \ref{sec:B1}.
\end{proof}

\begin{proof}[Proof of Theorem \ref{th:main}]
	Rewrite the left-hand side of \eqref{eq:th_2} as follows:
	\begin{eqnarray*}
		\frac{\|\mathbb{X}'Y\|_2^{2} - n^2(\kappa_{2} +  c (\kappa_{1} + \sigma_{\varepsilon}^2))}{n^{3/2}} &=&
		\frac{\|\mathbb{X}'Y\|_2^{2} - n^2\kappa_{2,p} - pn(\kappa_{1,p} + \sigma_{\varepsilon}^2)}{n^{3/2}} \\
		&& +~ \sqrt{n}(\kappa_{2,p} -\kappa_{2}) + \sqrt{n}c(\kappa_{1,p}-\kappa_{1}) + o(1).
	\end{eqnarray*}
	It remains to apply Lemma \ref{lemma:op} and Theorem \ref{th:wider} in order to conclude the proof of the theorem.
\end{proof}

\noindent
We end this section by deriving two supporting results that allows us to derive convenient alternative expressions for the terms $\kappa_1, \kappa_2$ and $\kappa_3$. For this, we introduce functions $\beta(\cdot)$ and $b(\cdot)$ by Definition \ref{def:main} below, which, under the assumptions of Theorem \ref{th:wider} and a  given structure of $\beta_j$'s, requires only to evaluate the terms $\beta(1), \beta(\rho), \beta(\rho^2)$ and $b_{1}(\rho), b_{2}(\rho)$. Then, due to Lemma \ref{lemma:alt_expressions} below, the expressions for $\kappa_1$, $\kappa_2$ and $\kappa_3$ easily follow.

\begin{definition}\label{def:main} Assume that $\sum_{j=1}^{\infty}\beta_{j}^2 < \infty$ and $|\rho| \leq 1$. Define,
	\begin{eqnarray}
		\beta(\rho)&:=&\sum_{j=1}^\infty \beta_j^2 \rho^j, \label{beta} \\
		b_{1}(\rho)&:=& \sum_{j'=2}^{\infty}\sum_{j=1}^{j'-1} \beta_j \beta_{j'}\rho^{j'-j}, \\
		b_{2}(\rho)&:=& \sum_{j=2}^{\infty}\sum_{j'=1}^{j-1} \beta_j \beta_{j'}\rho^{j+j'},\label{gamma}
	\end{eqnarray}
	and define the following quantities which involve derivatives of \eqref{beta}--\eqref{gamma}:	\begin{eqnarray} \label{eq:beta_d}
		\beta^{(1)}(\rho) &:=& \rho \frac{ {\rm d}\beta(\rho)  }{{\rm d} \rho} ~~=~~ \sum_{j=1}^\infty j \beta_j^2 \rho^j , \label{eq:beta1}\\
		b_{1}^{(1)}(\rho) &:=& \rho \frac{{\rm d} b_{1}(\rho)}{{\rm d}\rho} ~~=~~ \sum_{j'=2}^{\infty}\sum_{j=1}^{j'-1} \beta_j \beta_{j'}\rho^{j'-j}(j'-j), \label{gamma_1} \\
		b_{2}^{(1)}(\rho) &:=& \rho \frac{{\rm d} b_{2}(\rho)}{{\rm d}\rho} ~~=~~ \sum_{j'=2}^{\infty}\sum_{j=1}^{j'-1} \beta_j \beta_{j'}\rho^{j'+j}(j'+j), \\
		b^{(2)}(\rho) &:=& \rho^2 \frac{{\rm d}^2 b_{1}(\rho) }{{\rm d} \rho^2} +  b_{1}^{(1)}(\rho) ~~=~~ \sum_{j'=2}^{\infty}\sum_{j=1}^{j'-1} \beta_j \beta_{j'}\rho^{j'-j}(j'-j)^2.~~~~~~~~~ \label{gamma_2}
	\end{eqnarray}
\end{definition}

Note, that, by the rules of differentiation of power series, the functions \eqref{eq:beta1}--\eqref{gamma_2} are well defined.

\noindent
\begin{Lemma} \label{lemma:alt_expressions}
	Let the assumptions of Theorem \ref{th:wider} hold. Let $\kappa_1$, $\kappa_2$ and $\kappa_3$ be given by (\ref{eq:kappa1}), (\ref{eq:kappa2}) and (\ref{eq:kappa3}), respectively. Then, under notation in Definition \ref{def:main}, the following identities hold:
	\begin{enumerate}[label={(\rm \roman*)}]
		\item $\begin{aligned}[t]
			\kappa_1 &~=~  \beta(1) ~+~ 2 b_{1}(\rho),
		\end{aligned}$
		\item $\begin{aligned}[t] \label{eq:const1}
			\kappa_2 &~=~  \beta(1) \frac{1+\rho^2}{1-\rho^2} ~-~\beta(\rho^2)\frac{1}{1-\rho^2} ~+~ 2 \Big( b_{1}^{(1)}(\rho) ~+~ b_{1}(\rho)\frac{1+\rho^2}{1-\rho^2} ~-~ b_{2}(\rho)\frac{1}{1-\rho^2}\Big),
		\end{aligned}$
		\item $\begin{aligned}[t] \kappa_3
			&=~	\frac{1}{(1-\rho^2)^2}\big((1+4\rho^2+\rho^4)(\beta(1) + 2b_{1}(\rho))  -(1+3\rho^2)(\beta(\rho^2) + 2 b_{2}(\rho))\big) \\
			&\ \ \ \ + \frac{1}{1-\rho^2}\big(3b_{1}^{(1)}(\rho)(1+\rho^2) - 2\big(b_{2}^{(1)}(\rho)+ \beta^{(1)}(\rho^2)\big)\big) + b^{(2)}(\rho).
		\end{aligned}$
	\end{enumerate}

\end{Lemma}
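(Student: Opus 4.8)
The plan is to evaluate $\kappa_1,\kappa_2,\kappa_3$ by the same mechanism: write each as a double sum over the indices of $\beta$ with an inner lattice sum, and evaluate that inner sum in closed form via geometric series. Under the hypotheses of Theorem~\ref{th:wider} the semi-infinite kernel $\rho^{|i-j|}$ is a compression of the bi-infinite Toeplitz operator whose symbol $\frac{1-\rho^2}{1-2\rho\cos\theta+\rho^2}$ is bounded for $|\rho|<1$; since $\sum_j\beta_j^2<\infty$, the resulting quadratic forms (and their distance-weighted variants, whose symbols $\sum_m|m|^r\rho^{|m|}e^{im\theta}$ are likewise bounded) converge absolutely. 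This legitimizes interchanging the limit $p\to\infty$ with the summations and rearranging freely. Setting $r^{(2)}_{m,n}:=\sum_{k\ge1}\rho^{|m-k|+|k-n|}$ and $r^{(3)}_{m,n}:=\sum_{k,l\ge1}\rho^{|m-k|+|k-l|+|l-n|}$, one has $\kappa_1=\sum_{m,n}\beta_m\beta_n\rho^{|m-n|}$, $\kappa_2=\sum_{m,n}\beta_m\beta_n r^{(2)}_{m,n}$ and $\kappa_3=\sum_{m,n}\beta_m\beta_n r^{(3)}_{m,n}$.

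For (i) there is no inner sum: splitting $m=n$ from $m\ne n$ and using symmetry, $\kappa_1=\sum_m\beta_m^2+2\sum_{m<n}\beta_m\beta_n\rho^{n-m}=\beta(1)+2b_1(\rho)$. For (ii) I would compute $r^{(2)}_{m,n}$ by cutting the $k$-range at $\min(m,n)$ and $\max(m,n)$. On the diagonal this gives $r^{(2)}_{m,m}=\frac{1+\rho^2-\rho^{2m}}{1-\rho^2}$, whose contribution $\sum_m\beta_m^2 r^{(2)}_{m,m}$ is $\beta(1)\frac{1+\rho^2}{1-\rho^2}-\beta(\rho^2)\frac{1}{1-\rho^2}$. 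For $m<n$ the three pieces (below $m$, the flat stretch $m\le k\le n$, above $n$) give $r^{(2)}_{m,n}=\rho^{n-m}\big[\frac{2-\rho^{2m}}{1-\rho^2}+(n-m-1)\big]$; the key point is that the flat middle stretch contributes the linear factor $(n-m)$, which after summation against $\beta_m\beta_n\rho^{n-m}$ converts $b_1(\rho)$ into $b_1^{(1)}(\rho)$, while the boundary term $\rho^{2m}\rho^{n-m}=\rho^{n+m}$ produces $b_2(\rho)$. Collecting the off-diagonal part and using $-1+\frac{2}{1-\rho^2}=\frac{1+\rho^2}{1-\rho^2}$ yields $2\big(b_1^{(1)}(\rho)+b_1(\rho)\frac{1+\rho^2}{1-\rho^2}-b_2(\rho)\frac{1}{1-\rho^2}\big)$, which is (ii).

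The main obstacle is (iii), the evaluation of $r^{(3)}_{m,n}=\sum_{k,l\ge1}\rho^{|m-k|+|k-l|+|l-n|}$. Here I would fix $m\le n$ (using $m\leftrightarrow n$ symmetry) and partition the quadrant $\{k,l\ge1\}$ by the three walls $k=m$, $l=n$, $k=l$; on each region the exponent is affine in $(k,l)$, so the double sum factors into one- and two-sided geometric series. Summing the regions expresses $r^{(3)}_{m,n}$ as a translation-invariant bulk part depending on $n-m$ with polynomial prefactors up to degree two in $n-m$ (the degree-two term comes from the two-dimensional ``flat'' region, e.g. $m\le k\le l\le n$, on which the total exponent is the constant $n-m$ while the number of lattice points grows quadratically), plus boundary corrections proportional to $\rho^{2m}$ and $\rho^{m+n}$. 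After summation against $\beta_m\beta_n$, the linear and quadratic bulk factors generate $b_1^{(1)}(\rho)$ and $b^{(2)}(\rho)$, while the boundary corrections generate $\beta(\rho^2)$, $\beta^{(1)}(\rho^2)$, $b_2(\rho)$ and $b_2^{(1)}(\rho)$.

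The final step is purely organizational: separate $m=n$ from $m<n$, collect the prefactors of $1$, $\frac{1}{1-\rho^2}$ and $\frac{1}{(1-\rho^2)^2}$ in the resulting expression, and check they coincide with the three groups in (iii). The difficulty is entirely in the case enumeration for $r^{(3)}_{m,n}$ and the algebraic consolidation of the many geometric-sum remainders into the stated compact form; once the bulk/boundary decomposition is set up there is no conceptual hurdle, but this is the step most prone to bookkeeping error and I would verify it term by term.
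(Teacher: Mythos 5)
Your proposal is correct and follows essentially the same route as the paper: reduce each $\kappa_i$ to a double sum against $\beta_m\beta_n$ of an inner lattice sum, evaluate that inner sum by cutting the index range into regions where the exponent is affine (the verified formula for $r^{(2)}_{m,n}$ matches the paper's $K_2(l,l')$, and your bulk/boundary description of $r^{(3)}_{m,n}$, including the quadratic term from the flat region $m\le k\le l\le n$, matches the paper's nine-case evaluation of $J_2$), and justify the limit interchange by absolute convergence. The only substantive remainder is the term-by-term bookkeeping for (iii), which you correctly flag and which the paper carries out explicitly in its supplementary material.
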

\begin{proof}
	See the proof in Appendix \ref{ap:lemma45}.
\end{proof}

\begin{remark} \label{remark1}
	From the assumptions of Definition \ref{def:main} it follows that $\beta(1), |\beta(\rho)|, |b_{1}(\rho)|$, $|b_{2}(\rho)| < \infty$ for $|\rho|<1$. Thus, it follows from Lemma \ref{lemma:alt_expressions} that  $\kappa_i < \infty$, $i = 1,2,3$.
\end{remark}
\begin{proof}[Proof of Remark \ref{remark1}] Cases for $\beta(1)$ and $\beta(\rho)$ follow straightforwardly from the assumptions. Consider $b_1(\rho)$. Note, that for any $p$,
	\begin{eqnarray*}
		|b_{1}(\rho)| &\leq& \sum_{l_1,l_2 = 1}^{\infty} |\beta_{l_1}| |\beta_{l_2}| |\rho|^{|l_1-l_2|} ~~=~~ \sum_{l_1,l_2=1}^{\infty} \big(|\beta_{l_1}| |\rho|^{|l_1-l_2|/2}\big)\big(|\beta_{l_2}| |\rho|^{|l_1-l_2|/2}\big)  \\
		&\le&  (1/2)\sum_{l_1,l_2=1}^{\infty} \big(\beta^2_{l_1}|\rho|^{|l_1-l_2|} ~+~
		\beta^2_{l_2} |\rho|^{|l_1-l_2|}\big)\\
		&=& \sum_{l_1=1}^{\infty} \beta^2_{l_1}  \sum_{l_2=1}^{\infty}|\rho|^{|l_1-l_2|} \ \leq \ \beta(1) \frac{1+|\rho|}{1-|\rho|}\  <\ \infty
	\end{eqnarray*}
	by (\ref{uii}). In a similar manner, it is easy to see that $|b_{2}(\rho)| \leq \beta(1)\frac{|\rho|}{1-|\rho|}$.
\end{proof}

\section{Approximate sparsity: an example}
\label{sec:sparsity}

\noindent
In this section we study the case when coefficients $\beta_j$ decay hyperbolically, i.e., $\beta_j = j^{-1}, j \geq 1$. This assumption is analogous to the assumption of approximate sparsity, as defined by \cite{Chernozhukov2013}. The authors of the aforementioned paper note, that for approximately sparse models the regression function can be well approximated by a linear combination of relatively few important regressors, which is one of the reasons of  popularity of variable selection approaches such as LASSO (\cite{Tibshirani}) and it's modifications (see, e.g., \cite{zou2006adaptive}, \cite{MEINSHAUSEN2007374}, \cite{belloni2011square}). At the same time, approximate sparsity allows all coefficients $\beta_j$ to be nonzero, which is a more plausible assumption in many real world settings.

In order to derive the quantities in Theorem~\ref{th:main}, we apply the results of Lemma~\ref{lemma:alt_expressions}. For this, we establish the expressions for the quantities in Definition~\ref{def:main}.

Define the real dilogarithm function (see, e.g., \cite{10.2307/2006312}): 
\begin{eqnarray} \label{eq:polylog}
	\operatorname{Li}_{2}(x) &=& - \int_{0}^{x}\frac{\log(1-u)}{u} \dd u, ~~~ x\leq1, ~~x \in \mathbb{R}.
\end{eqnarray}
(Here and below, $\int_{0}^{x} = - \int_{x}^{0}$ if  $x \leq 0$.) For $|x| \leq 1$ the real dilogarithm has a series representation,
\begin{eqnarray}
	\operatorname{Li}_{2}(x) &=& \sum_{k=1}^{\infty}\frac{x^{k}}{k^2}.
\end{eqnarray}
Then,
\begin{eqnarray*}
	\beta(1) ~=~ \sum_{j=1}^{\infty} \frac{1}{j^2} ~=~ \frac{\pi^2}{6},&&\beta(\rho) ~=~\sum_{j=1}^{\infty} \frac{\rho^{j}}{j^2} ~=~  \text{Li}_2(\rho).
\end{eqnarray*}
Additionally, we have
\begin{eqnarray} \label{eq:polylog_derivative}
	\frac{\dd}{\dd \rho}\operatorname{Li}_{2}(\rho) &=& -\frac{\log(1-\rho)}{\rho}.
\end{eqnarray}
Thus, by \eqref{eq:beta_d} and \eqref{eq:polylog_derivative}, we establish
\begin{eqnarray*}
	\beta^{(1)}(\rho) &=& \rho \frac{\dd}{\dd \rho}\beta(\rho) ~=~ \rho \frac{\dd}{\dd \rho} \operatorname{Li}_{2}(\rho) ~=~ -\log(1-\rho).
\end{eqnarray*}
Next, note that
\begin{eqnarray}
	b_{1}(\rho)&=&\sum_{i=2}^{\infty} \sum_{j=1}^{i-1}\frac{\rho^{i-j}}{ij} = \sum_{i=2}^{\infty} \sum_{k=1}^{i-1}\frac{\rho^k}{i(i-k)}\notag\\
	&=& \sum_{k=1}^\infty \rho^k \sum_{i=k+1}^\infty\frac{1}{i(i-k)} \ =\
	\sum_{k=1}^\infty \frac{\rho^k}{k} \sum_{l=1}^k\frac{1}{l}\notag\\
	&=&
	\sum_{l=1}^\infty  \frac{1}{l} \sum_{k=l}^\infty \frac{\rho^k}{k} \ =\
	\sum_{l=1}^\infty \frac{1}{l} \int_0^\rho \frac{x^{l-1}}{1-x}\dd x \notag\\
	&=& -\int_{0}^{\rho}  \frac{\log (1-x)}{x(1-x)} \dd x 
	\ = \ \frac{\log^2(1-\rho)}{2} + \text{Li}_{2}(\rho), \label{eq:5_1}
\end{eqnarray}
where we have used identities
\begin{eqnarray*}
	\sum_{i=k+1}^\infty\frac{1}{i(i-k)} & =& \frac{1}{k} \sum_{l=1}^k\frac{1}{l}, \ k\ge1, \ \ {\rm ~~~~~~}\ \ \sum_{k=l}^\infty \frac{\rho^k}{k} =\int_0^\rho \frac{x^{l-1}}{1-x}\dd x
\end{eqnarray*}
and \eqref{eq:polylog}.
Then, by \eqref{gamma_1}, \eqref{eq:polylog_derivative} and \eqref{eq:5_1},
\begin{eqnarray*}
	b_{1}^{(1)}(\rho) &=& \rho \frac{\dd}{\dd \rho}b_{1}(\rho)  ~=~ -\frac{\log (1-\rho)}{1-\rho},
\end{eqnarray*}
whereas by (\ref{gamma_2}),
\begin{eqnarray*}
	b^{(2)}(\rho) &=&\rho^2 \frac{{\rm d}^2 b_{1}(\rho) }{{\rm d} \rho^2} +  b_{1}^{(1)}(\rho)
	~~=~~ \frac{\rho-\rho \log (1-\rho)}{(1-\rho)^2}.
\end{eqnarray*}
\noindent	Further, note that
\begin{eqnarray}
	b_{2}(\rho)&=& \sum_{i=2}^{\infty} \sum_{j=1}^{i-1}\frac{\rho^{i+j}}{ij} ~=~\sum_{i=2}^{\infty} \frac{\rho^i}{i} \sum_{j=1}^{i-1} \frac{\rho^j}{j} ~~=~~\sum_{i=2}^{\infty} \frac{\rho^{i}}{i}  \int_{0}^{\rho} \sum_{j=1}^{i-1} x^{j-1} \dd x \notag \\
	&=&\sum_{i=1}^{\infty} \frac{\rho^{i+1}}{i+1}  \int_{0}^{\rho}  \frac{1-x^{i}}{1-x}   \dd x  \notag\\
	&=&-\log(1-\rho)\bigg(\sum_{i=1}^{\infty} \frac{\rho^{i}}{i} - \rho\bigg)   - \int_{0}^{\rho}\left(\sum_{i=1}^{\infty} \frac{\rho^{i}}{i}    \frac{x^{i-1}}{1-x} - \rho\frac{1}{1-x}  \right)\dd x  \notag\\
	&=&-\log(1-\rho)\sum_{i=1}^{\infty} \frac{\rho^{i}}{i}   - \int_{0}^{\rho}\sum_{i=1}^{\infty} \frac{(\rho x)^{i}}{i}    \frac{1}{x(1-x)} \dd x  \notag\\
	&=& \log^2(1-\rho) + \int_{0}^{\rho} \frac{\log (1-\rho x)}{ x(1-x)}  \dd x~~~~~~\notag \\
	&=&  \frac{1}{2}\big(\log^2(1-\rho) - \operatorname{Li}_{2}(\rho^2)  \big),~~~~~~~~ \label{eq:gamma_2_sparse}
\end{eqnarray}
where the last equality follows from Lemma \ref{lemma:integral2}.
Next, by \eqref{gamma_1}, \eqref{eq:polylog_derivative} and \eqref{eq:gamma_2_sparse} we have
\begin{eqnarray*}
	b_{2}^{(1)}(\rho)
	&=& \log \left(1-\rho^2\right)-\frac{\rho \log (1-\rho)}{1-\rho}.
\end{eqnarray*}

\noindent Thus, we can apply Lemma \ref{lemma:alt_expressions}(i) and arrive at the following expression for $\kappa_1$:
\begin{eqnarray} \label{eq:s_sparse}
	\kappa_1 &=&  \frac{\pi^2}{6} ~+~ \log^2(1-\rho)  ~+~2\text{Li}_{2}(\rho).
\end{eqnarray}
Similarly, for $\kappa_2$, by collecting and simplifying the terms, by Lemma \ref{lemma:alt_expressions}(ii) and Lemma \ref{lemma:integral2},  we have
\begin{eqnarray}\label{eq:kappa2_sparse}
	\kappa_2
	&=& \frac{1+\rho^2}{1-\rho^2}\Big(\frac{\pi^2}{6} + 2\text{Li}_{2}(\rho) \Big) ~-~ \frac{2\log (1-\rho)}{1-\rho}  ~+~ \log^2(1-\rho)\frac{\rho^2}{1-\rho^2} \notag \\
	&=& \frac{1}{1-\rho^2}\big((1+\rho^2)\kappa_{1} - \log^2(1-\rho) - 2(1+\rho)\log(1-\rho){\big)}.
\end{eqnarray}
Lastly, for $\kappa_3$, by Lemma \ref{lemma:alt_expressions}(iii), through simplification of terms, we get
\begin{eqnarray}
	\kappa_{3}
	&=& \frac{1}{(1-\rho^2)^2}\Big((1+4\rho^2+\rho^4)\Big(\frac{\pi^2}{6}  + 2\text{Li}_{2}(\rho)\Big) +  \log^2(1-\rho)\rho^2(1+\rho^2) \notag  \\
	&& -~~ (3-\rho+4\rho^2)(1+\rho)\log(1-\rho)  + \rho(1+\rho)^2 \Big) \notag \\
	&=& \kappa_{2}~\frac{1+3\rho^2}{1-\rho^2} + \frac{1}{(1-\rho^2)^2}\Big(  (-1+\rho+2\rho^2)(1+\rho)\log(1-\rho)  + \rho(1+\rho)^2 -2\rho^4 \kappa_{1} \Big).~~~~~~~~~~ \label{eq:kappa3_sparse}
\end{eqnarray}

This allows us to apply Theorem \ref{th:main} under the considered specification of the parameter $\beta$, and conclude with the following corollary.

\begin{Corollary} \label{cor:main}
	Assume a model \eqref{model} with \eqref{KMS} covariance structure and consider $\beta_j := j^{-1}$, $j = 1, \ldots, p$. Let $p=p_n$ satisfies
	\begin{eqnarray*}
		p\to\infty,  && \frac{p}{n}\to c\in (0,\infty).
	\end{eqnarray*}
	Then
	\begin{eqnarray}\label{eq:th_sparse}
		\frac{\|\mathbb{X}'Y\|_2^{2} - n^2\big(\kappa_{2} +  c (\kappa_{1} + \sigma_{\varepsilon}^2)\big)}{n^{3/2}} &\convd& \mathcal{N}(0,s^2),
	\end{eqnarray}
	where
	\begin{eqnarray}
		s^2&=&	4 \kappa_{2}^2 + 4(\kappa_{1} + \sigma_{\varepsilon}^2)\left(2 \kappa_{2} c + \kappa_{3} \right) + 2c (\kappa_{1} + \sigma_{\varepsilon}^2)^2  \Big(c + \frac{1+\rho^2}{1-\rho^2}\Big),~~~~~~
	\end{eqnarray}
	and $\kappa_1$, $\kappa_2$ and $\kappa_3$ are defined by \eqref{eq:s_sparse}, \eqref{eq:kappa2_sparse} and \eqref{eq:kappa3_sparse}, respectively.
\end{Corollary}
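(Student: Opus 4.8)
The plan is to verify that the hyperbolic choice $\beta_j = j^{-1}$ satisfies every hypothesis of Theorem~\ref{th:main}, and then to specialize that theorem, reading off $\kappa_1,\kappa_2,\kappa_3$ from the explicit computations carried out immediately above the statement. In other words, the corollary is a direct application of Theorem~\ref{th:main} once the admissibility of this particular $\beta$ is confirmed and the three constants are identified in closed form.

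First I would check the three conditions on $\beta$. Since $\sum_{j=1}^{\infty} j^{-2} = \pi^2/6 < \infty$, assumption \eqref{bnas} holds. For the tail condition, $\sum_{j=p+1}^{\infty}\beta_j^2 = \sum_{j=p+1}^{\infty} j^{-2} \le \int_p^\infty x^{-2}\dd x = p^{-1} = o(p^{-1/2})$, as required. For the last condition, taking any $\alpha \in (1/2, 1]$ gives $|\beta_j| j^{\alpha} = j^{\alpha-1}$, which is bounded over $j \ge 1$ (indeed $\sup_{j\ge1} j^{\alpha-1} = 1$), so $\sup_{j\ge 1}|\beta_j| j^{\alpha} < \infty$ with $\alpha > 1/2$. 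Hence all hypotheses of Theorem~\ref{th:main} are met.

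Next, the constants $\kappa_1,\kappa_2,\kappa_3$ are obtained from Lemma~\ref{lemma:alt_expressions} by substituting the closed forms of $\beta(\cdot)$, $b_1(\cdot)$, $b_2(\cdot)$ and their derivatives established for $\beta_j = j^{-1}$ in \eqref{eq:5_1}--\eqref{eq:gamma_2_sparse}; namely $\beta(1) = \pi^2/6$, $\beta(\rho) = \operatorname{Li}_2(\rho)$, $\beta^{(1)}(\rho) = -\log(1-\rho)$, together with the displayed expressions for $b_1(\rho)$, $b_1^{(1)}(\rho)$, $b^{(2)}(\rho)$, $b_2(\rho)$ and $b_2^{(1)}(\rho)$. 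These substitutions, after collecting terms, yield precisely \eqref{eq:s_sparse}, \eqref{eq:kappa2_sparse} and \eqref{eq:kappa3_sparse} for $\kappa_1,\kappa_2,\kappa_3$, respectively.

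Finally, with the assumptions verified and the constants identified, Theorem~\ref{th:main} applies verbatim and delivers the convergence \eqref{eq:th_sparse} with variance $s^2$ of the stated form \eqref{eq:th_main_sigma}. The only genuinely non-routine ingredient is the closed-form evaluation of the dilogarithmic series for $b_1$ and $b_2$ and the subsequent simplification of $\kappa_2$ and $\kappa_3$ — in particular the integral identity used to pass from the series for $b_2(\rho)$ to \eqref{eq:gamma_2_sparse}, which relies on Lemma~\ref{lemma:integral2}. Since this computation has already been carried out preceding the statement, the corollary itself follows immediately by specialization, and I expect no further obstacle beyond bookkeeping of these expressions.
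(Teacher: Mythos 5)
Your proposal is correct and follows essentially the same route as the paper: the corollary is obtained by checking that $\beta_j=j^{-1}$ satisfies \eqref{bnas}, $\sum_{j>p}\beta_j^2=o(p^{-1/2})$ and $\sup_j|\beta_j|j^{\alpha}<\infty$ for some $\alpha\in(1/2,1]$, substituting the closed-form dilogarithmic expressions for $\beta(\cdot)$, $b_1(\cdot)$, $b_2(\cdot)$ and their derivatives into Lemma~\ref{lemma:alt_expressions} to obtain \eqref{eq:s_sparse}--\eqref{eq:kappa3_sparse}, and then invoking Theorem~\ref{th:main}. Your explicit verification of the hypotheses is a point the paper leaves implicit, but the argument is otherwise identical.
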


In order to illustrate the results of Corollary~\ref{cor:main}, we end this section with a Monte Carlo simulation study, where we generate 1000 independent replications of the statistic $\|\mathbb{X}'Y\|^{2}_{2}$. The data is generated following the assumptions of Corollary~\ref{cor:main} for varying sets of parameters $(n,p,\rho,\sigma_{\varepsilon}^{2})$. The results are presented in Figures \ref{fig:2}--\ref{fig:6}. Figures show the empirical cumulative distribution function (CDF) and the empirical probability density function (PDF), together with the limiting CDF and PDF of $V \stackrel{d}{=}\mathcal{N}(0, s^2)$ for different parameter combinations. We notice that for relatively small values of $\rho$, the distribution is fairly close to the limiting distribution even for small values of $(p,n)$. On the other hand, slightly slower convergence is evident for $\rho \approx 1$ (see Figure \ref{fig:2} for simulation results with $\rho = 0.95$). However, it can be noted that for large values of $\rho$ the effect of $c$ term is greatly reduced, resulting in very similar distributions when comparing, e.g., $c =1 $ against $c = 10$.

\begin{figure}
	\centering
	\includegraphics[width=0.49\linewidth]{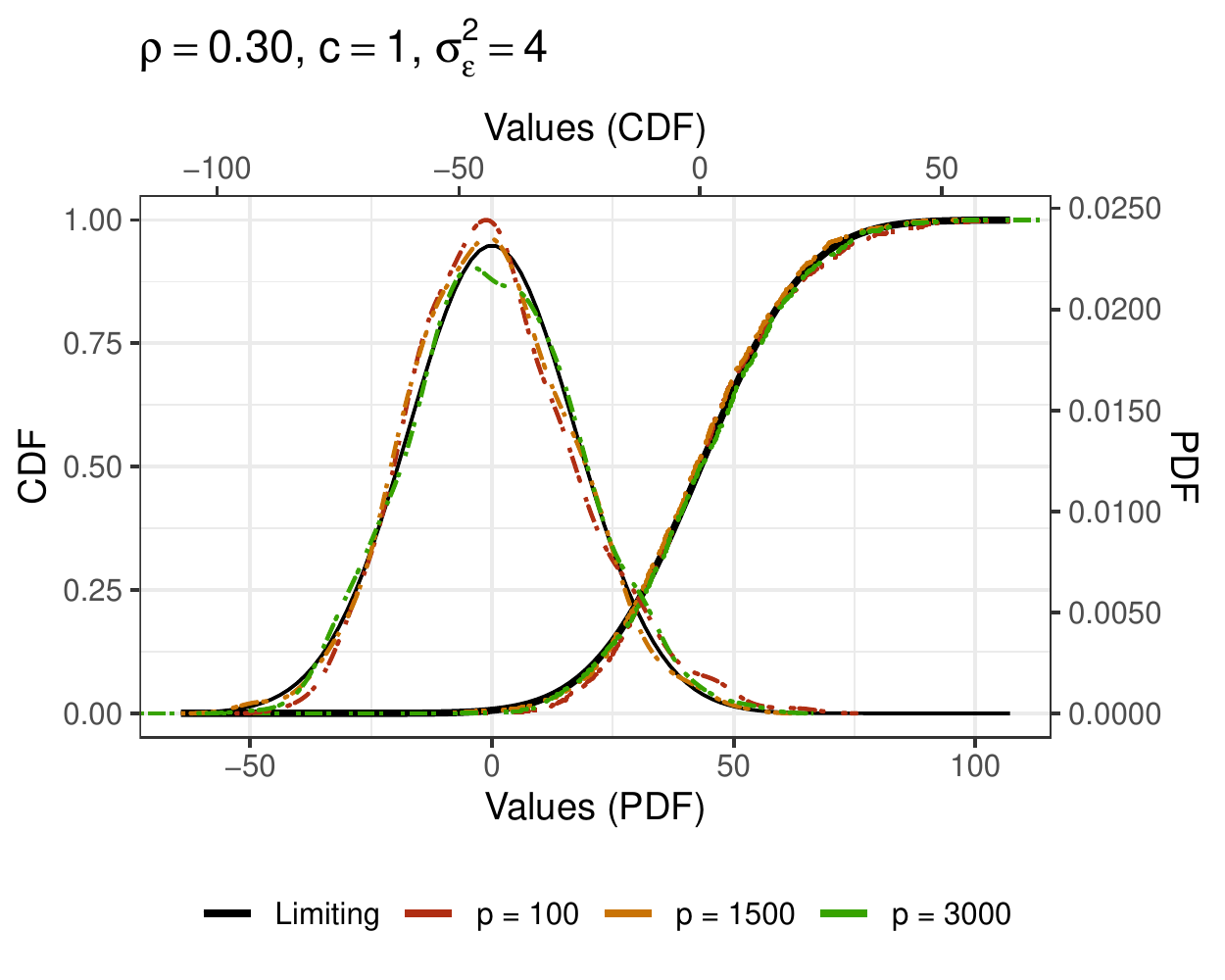} \includegraphics[width=0.49\linewidth]{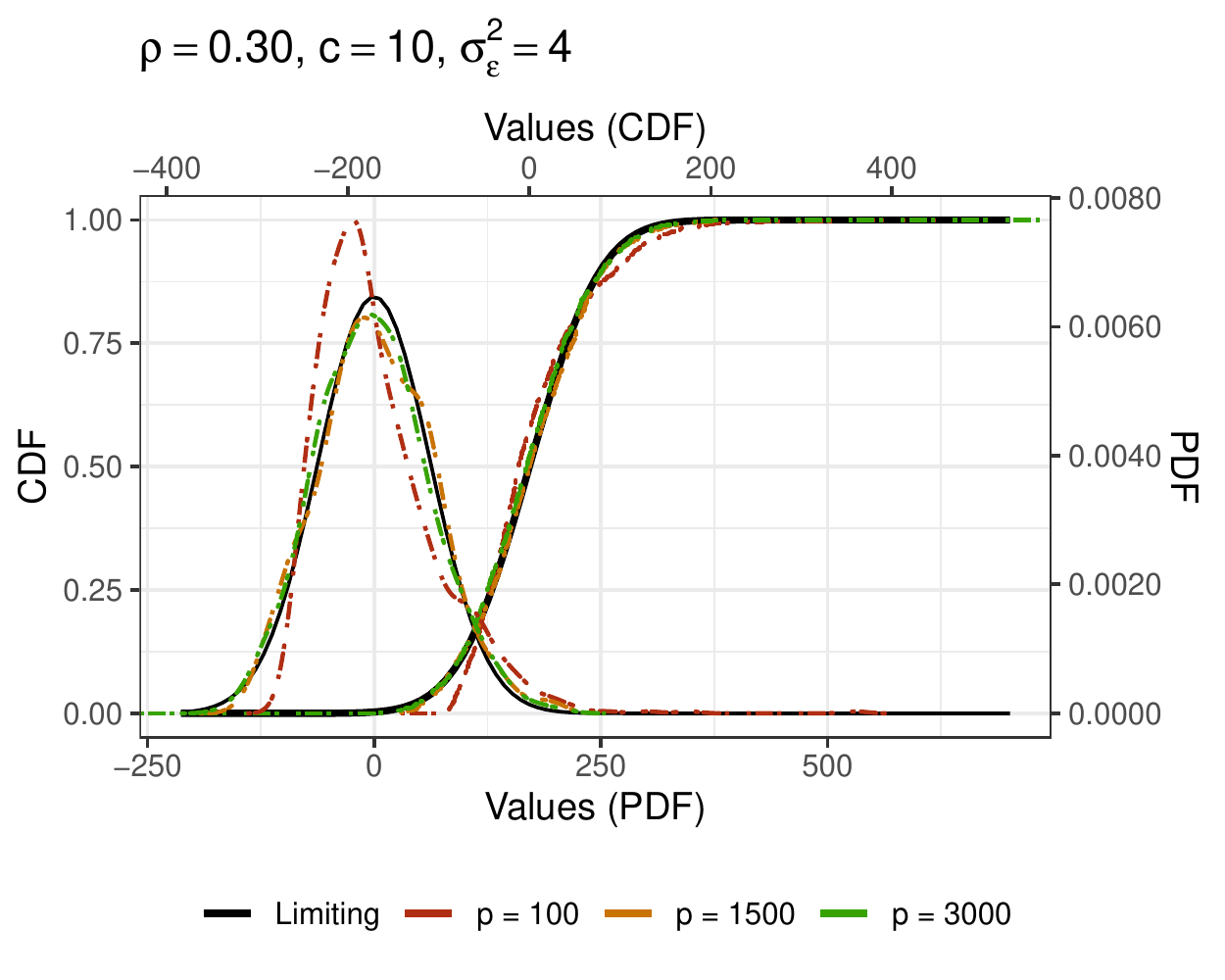}
	\caption{Comparison of the PDF and CDF using 1000 replications from the Monte Carlo simulation of the statistic \eqref{eq:th_sparse} with the limiting distribution $\mathcal{N}(0,s^2)$ by the Corollary \ref{cor:main} (in black) for $\rho = 0.3$, $c=1$, $\sigma_{\varepsilon}^2 = 4$ (left) and $\rho = 0.3$, $c=10$, $\sigma_{\varepsilon}^2 = 4$ (right). }
	\label{fig:2}
\end{figure}
\begin{figure}
	\centering
	\includegraphics[width=0.49\linewidth]{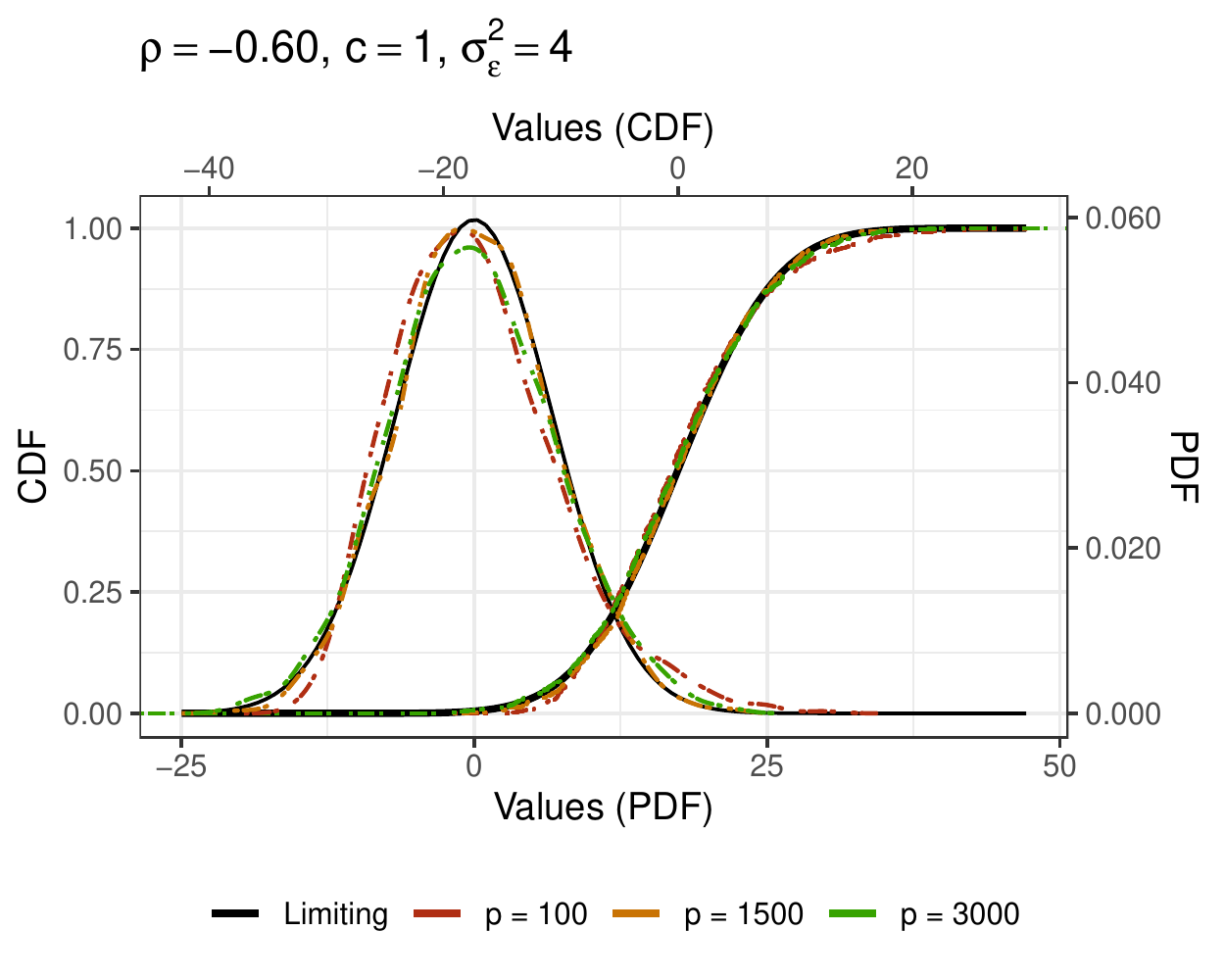} \includegraphics[width=0.49\linewidth]{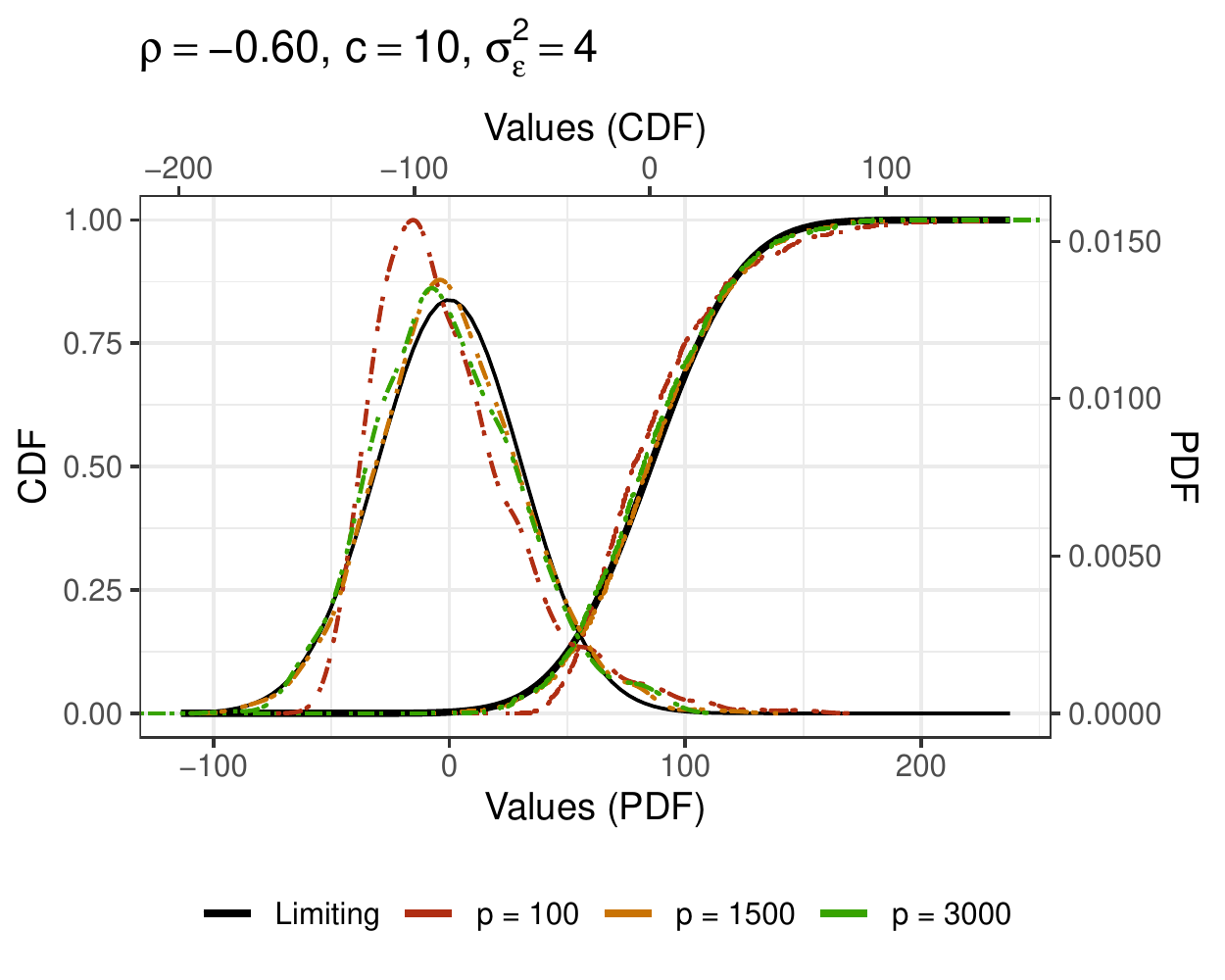}
	\caption{Comparison of the PDF and CDF using 1000 replications from the Monte Carlo simulation of the statistic \eqref{eq:th_sparse} with the limiting distribution $\mathcal{N}(0,s^2)$ by the Corollary \ref{cor:main} (in black) for $\rho = -0.6$, $c=1$, $\sigma_{\varepsilon}^2 = 4$ (left) and $\rho = -0.6$, $c=10$, $\sigma_{\varepsilon}^2 = 4$ (right). }
	\label{fig:3}
\end{figure}
\begin{figure}
	\centering
	\includegraphics[width=0.49\linewidth]{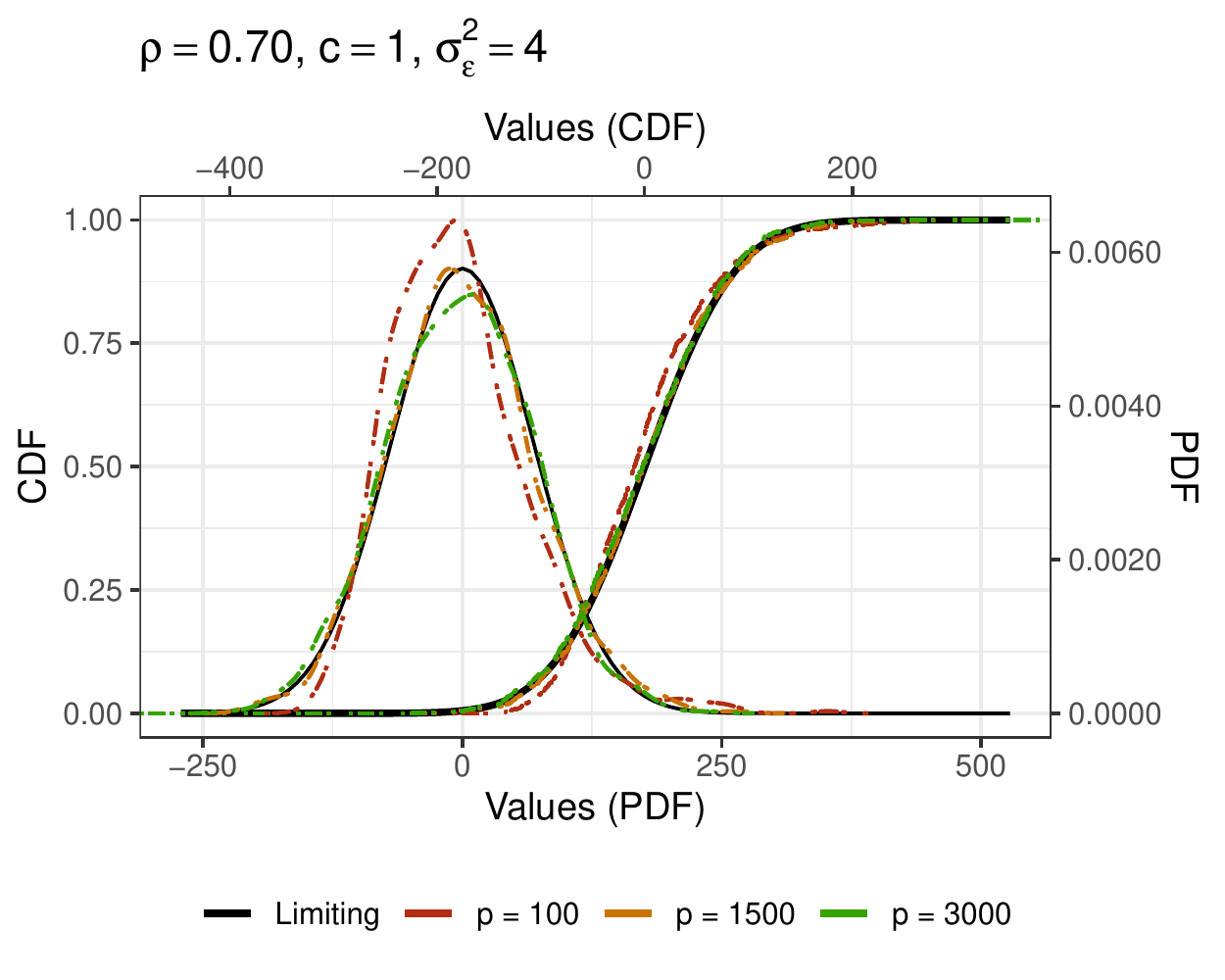} \includegraphics[width=0.49\linewidth]{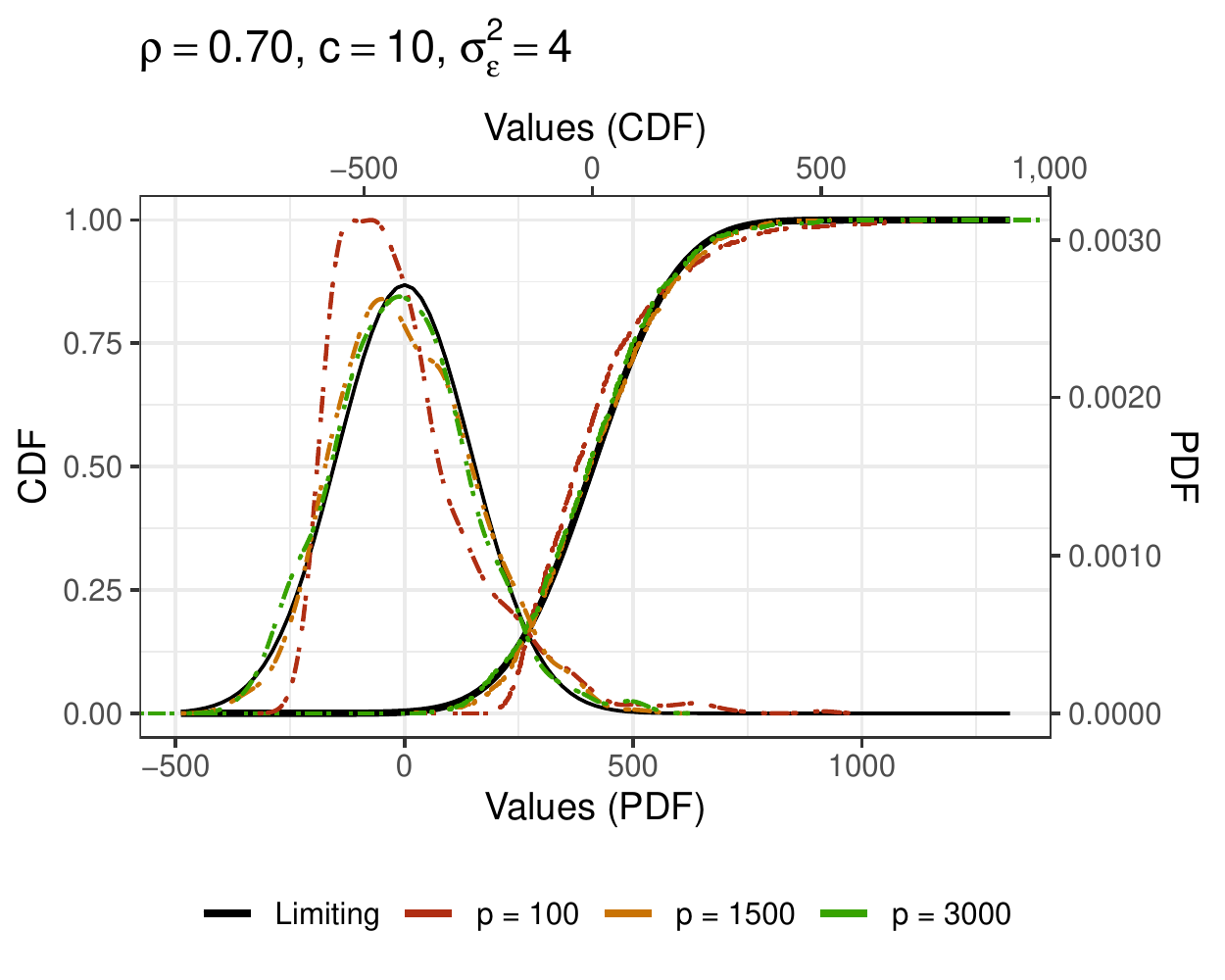}
	\caption{Comparison of the PDF and CDF using 1000 replications from the Monte Carlo simulation of the statistic \eqref{eq:th_sparse} with the limiting distribution $\mathcal{N}(0,s^2)$ by the Corollary \ref{cor:main} (in black) for $\rho = 0.7$, $c=1$, $\sigma_{\varepsilon}^2 = 4$ (left) and $\rho =0.7$, $c=10$, $\sigma_{\varepsilon}^2 = 4$ (right). }
	\label{fig:4}
\end{figure}
\begin{figure}
	\centering
	\includegraphics[width=0.49\linewidth]{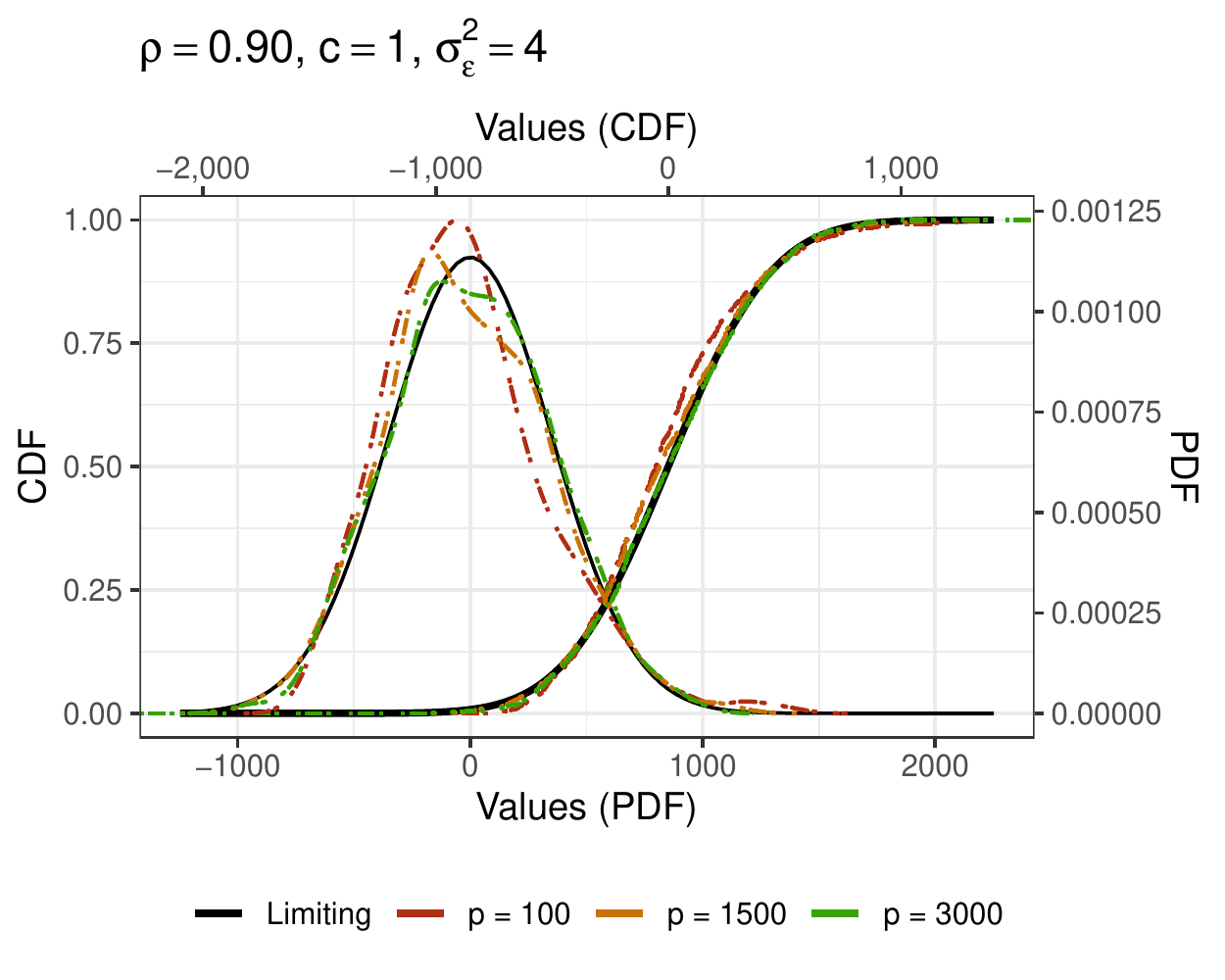} \includegraphics[width=0.49\linewidth]{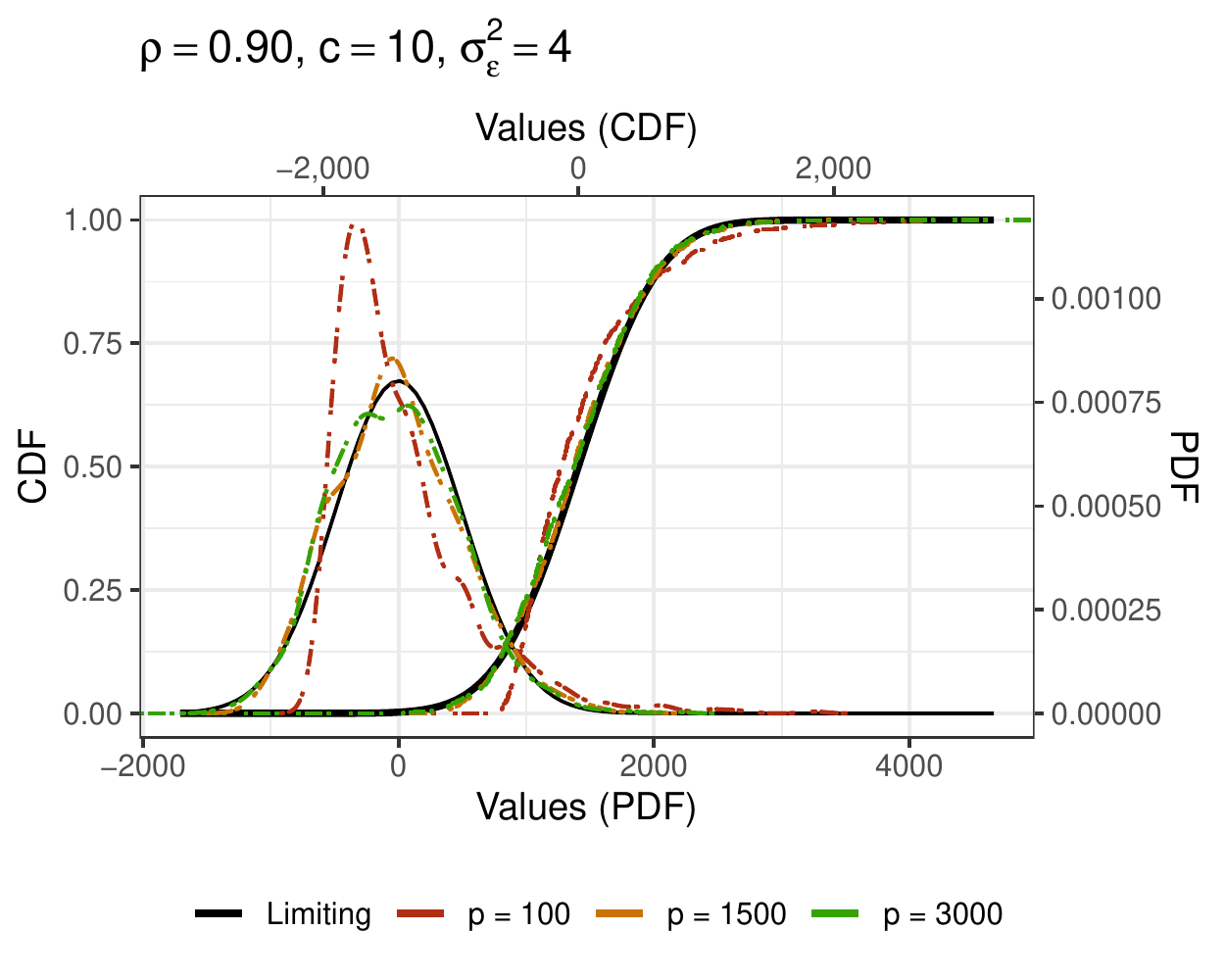}
	\caption{Comparison of the PDF and CDF using 1000 replications from the Monte Carlo simulation of the statistic \eqref{eq:th_sparse} with the limiting distribution $\mathcal{N}(0,s^2)$ by the Corollary \ref{cor:main} (in black) for $\rho = 0.9$, $c=1$, $\sigma_{\varepsilon}^2 = 4$ (left) and $\rho = 0.9$, $c=10$, $\sigma_{\varepsilon}^2 = 4$ (right). }
	\label{fig:5}
\end{figure}
\begin{figure}
	\centering
	\includegraphics[width=0.49\linewidth]{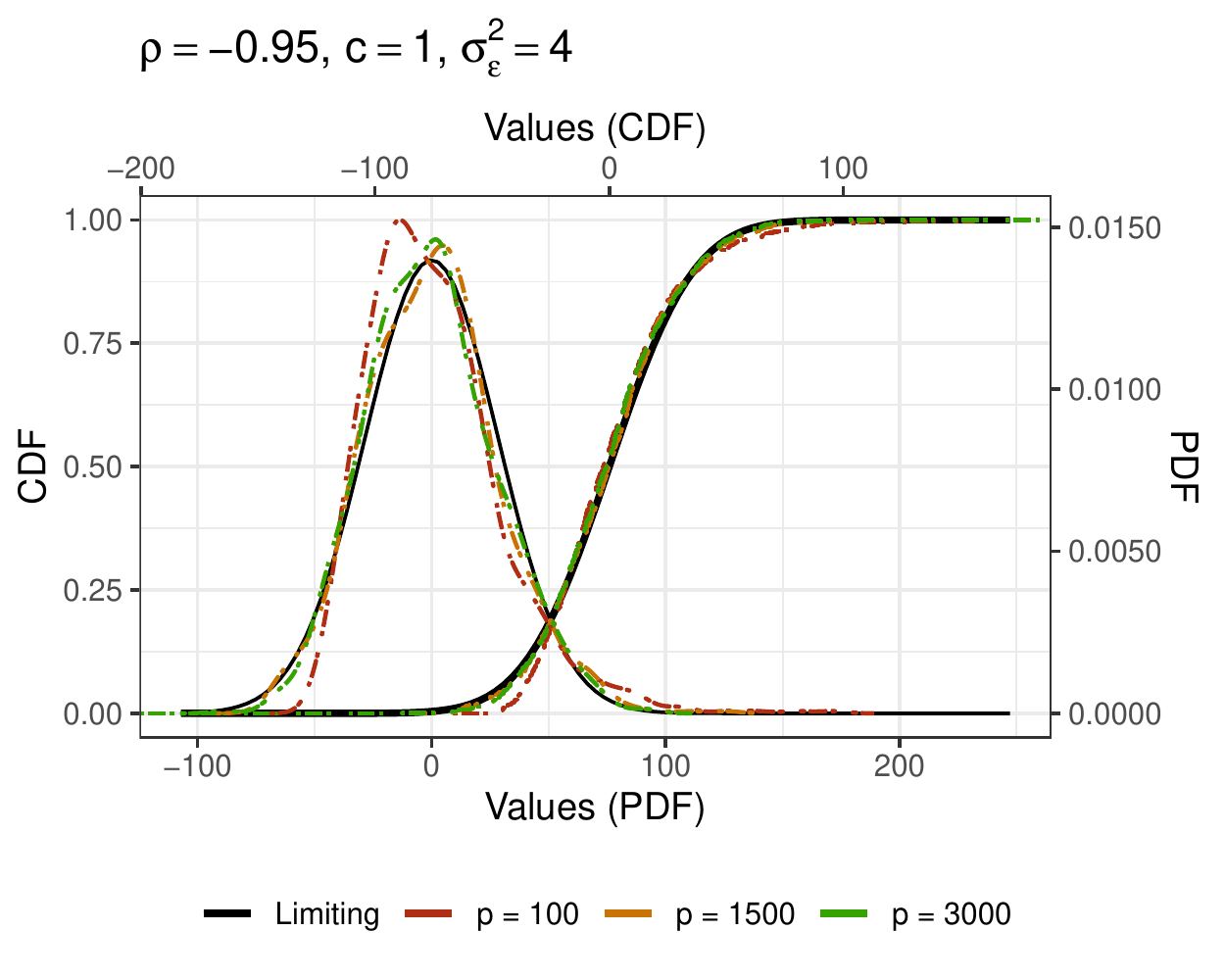} \includegraphics[width=0.49\linewidth]{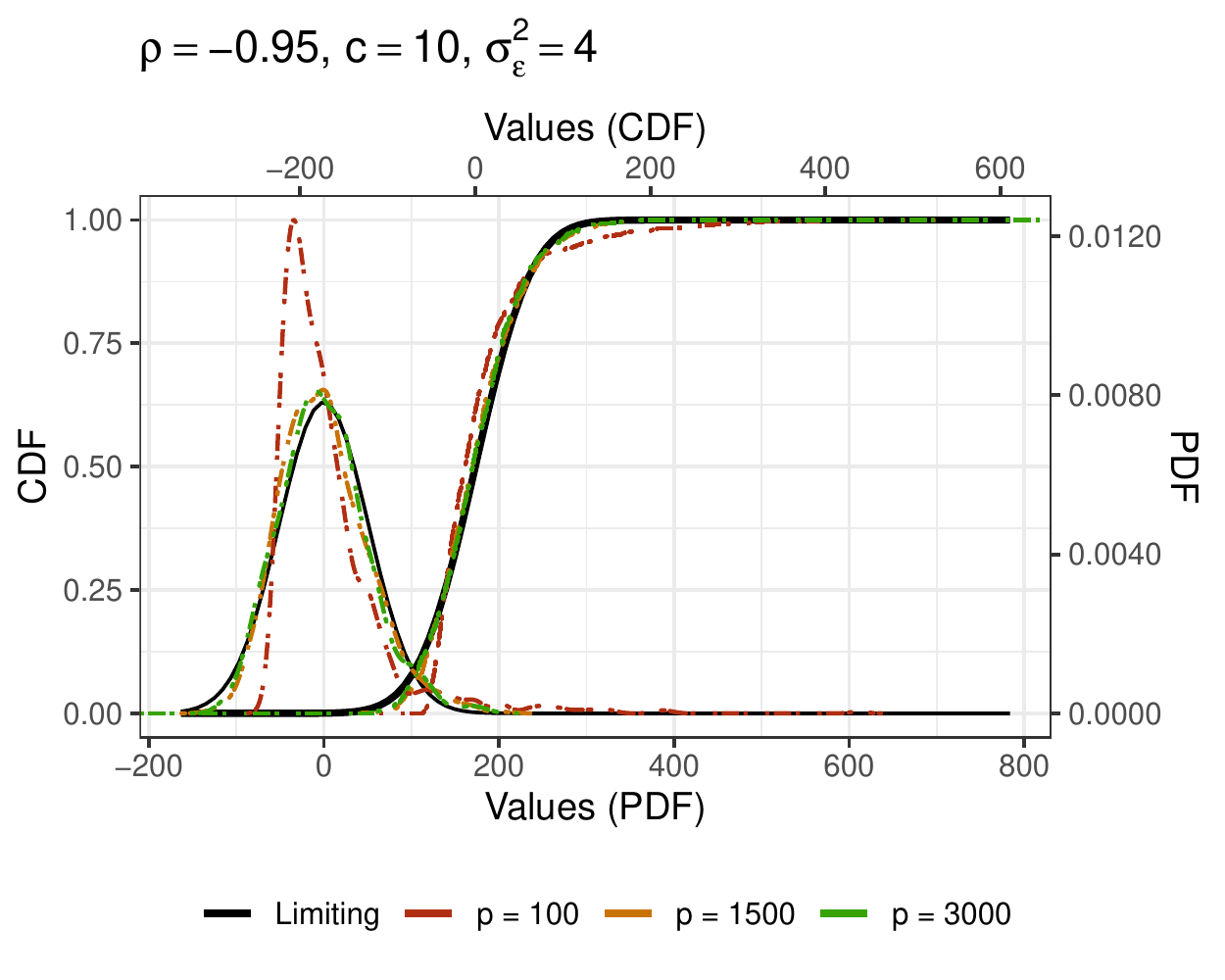}
	\caption{Comparison of the PDF and CDF using 1000 replications from the Monte Carlo simulation of the statistic \eqref{eq:th_sparse} with the limiting distribution $\mathcal{N}(0,s^2)$ by the Corollary \ref{cor:main} (in black) for $\rho = -0.95$, $c=1$, $\sigma_{\varepsilon}^2 = 4$ (left) and $\rho = -0.95$, $c=10$, $\sigma_{\varepsilon}^2 = 4$ (right). }
	\label{fig:6}
\end{figure}
\clearpage
\vspace{6pt}
\appendix

\section{Appendix}

\noindent
Throughout the proofs we use the notation $C$ to mark generic constants, the specific values of which can change from line to line.
\subsection{Technical lemmas}
\begin{Lemma}\label{lemma:integral2}
	Assume that $|\rho|<1$.
	Then,
	\begin{eqnarray*}
		\int_{0}^{\rho} \frac{\log (1-\rho x)}{x(1-x)} \dd x &=&  -\frac{1}{2}\big(\operatorname{Li}_{2}(\rho^2) + \log^2(1-\rho) \big),
	\end{eqnarray*}
	where $\operatorname{Li}_2$ denotes the real dilogarithm function. (Recall, that for $\rho < 0$, by $\int^{\rho}_0$ we denote $-\int_{\rho}^{0}$.)
\end{Lemma}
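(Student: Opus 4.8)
The plan is to treat both sides as functions of $\rho$ on $(-1,1)$, verify they agree at $\rho=0$, show they have identical derivatives, and conclude equality. Write $F(\rho)$ for the left-hand side and $G(\rho):=-\tfrac12\big(\operatorname{Li}_2(\rho^2)+\log^2(1-\rho)\big)$ for the right-hand side. First I would record that for $|\rho|<1$ the integrand is well-behaved on the path of integration: one has $1-\rho x\ge 1-\rho^2>0$ and $1-x>0$ there, while the apparent singularity at $x=0$ is removable since $\log(1-\rho x)/x\to-\rho$. Hence $F(\rho)$ is finite, $F(0)=0$, and $F$ is continuously differentiable in $\rho$; trivially $G(0)=0$ as well. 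Throughout, the case $\rho<0$ is handled with the stated convention $\int_0^\rho=-\int_\rho^0$, under which the same bounds hold.

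Next I would differentiate $F$ by the Leibniz rule, which produces a boundary term from the upper limit together with the integral of the $\rho$-derivative of the integrand, $\partial_\rho\log(1-\rho x)=-x/(1-\rho x)$:
\begin{eqnarray*}
	F'(\rho)=\frac{\log(1-\rho^2)}{\rho(1-\rho)}-\int_0^\rho\frac{\dd x}{(1-\rho x)(1-x)}.
\end{eqnarray*}
The remaining integral I would evaluate by partial fractions, $\frac{1}{(1-\rho x)(1-x)}=\frac{1}{1-\rho}\big(\frac{1}{1-x}-\frac{\rho}{1-\rho x}\big)$, which gives $\int_0^\rho=\frac{1}{1-\rho}\big(\log(1-\rho^2)-\log(1-\rho)\big)=\frac{\log(1+\rho)}{1-\rho}$. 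Substituting this and using $\log(1-\rho^2)=\log(1-\rho)+\log(1+\rho)$ simplifies the expression to $F'(\rho)=\frac{\log(1-\rho)}{\rho(1-\rho)}+\frac{\log(1+\rho)}{\rho}$.

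Then I would compute $G'$ using $\operatorname{Li}_2'(u)=-\log(1-u)/u$ together with the chain rule for $\operatorname{Li}_2(\rho^2)$, obtaining $G'(\rho)=\frac{\log(1-\rho^2)}{\rho}+\frac{\log(1-\rho)}{1-\rho}$. A brief algebraic check — again splitting $\log(1-\rho^2)$ and using the identity $\frac1\rho+\frac1{1-\rho}=\frac{1}{\rho(1-\rho)}$ — shows that $F'(\rho)=G'(\rho)$ identically on $(-1,1)$. Since $F$ and $G$ are both $C^1$, agree at $\rho=0$, and share the same derivative, they coincide, which is the assertion.

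The main obstacle is careful bookkeeping rather than any deep idea: correctly applying the Leibniz rule when $\rho$ appears both in the upper limit and inside the integrand, keeping track of signs in the $\rho<0$ regime, and closing the argument through the logarithm-splitting identity instead of invoking dilogarithm reflection formulas (indeed, the virtue of the differentiation approach is that it sidesteps those). One could alternatively proceed directly by writing $\frac{1}{x(1-x)}=\frac1x+\frac1{1-x}$, so that the $\frac1x$ piece immediately yields $-\operatorname{Li}_2(\rho^2)$ via the substitution $u=\rho x$; but the $\frac{1}{1-x}$ piece then reduces the problem to showing $\int_0^\rho\frac{\log(1-\rho x)}{1-x}\dd x=\tfrac12\operatorname{Li}_2(\rho^2)-\tfrac12\log^2(1-\rho)$, which is itself cleanest to establish by the very same differentiation trick, so I prefer the unified argument above.
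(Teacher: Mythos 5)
Your proof is correct, and it takes a genuinely different route from the paper's. The paper splits $\frac{1}{x(1-x)}=\frac1x+\frac1{1-x}$, reads off $-\operatorname{Li}_2(\rho^2)$ from the first piece, and then handles $\int_0^\rho\frac{\log(1-\rho x)}{1-x}\dd x$ by two substitutions ($v=\rho-\rho x$, then $w=-v/(1-\rho)$) followed by an appeal to the dilogarithm reflection identity $\operatorname{Li}_2\big(\tfrac{x}{x-1}\big)=-\operatorname{Li}_2(x)-\tfrac12\log^2(1-x)$ and the duplication identity $\operatorname{Li}_2(x)+\operatorname{Li}_2(-x)=\tfrac12\operatorname{Li}_2(x^2)$. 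You instead differentiate both sides in $\rho$ and check agreement at $\rho=0$; all your intermediate computations check out: the Leibniz boundary term $\frac{\log(1-\rho^2)}{\rho(1-\rho)}$, the partial-fraction evaluation $\int_0^\rho\frac{\dd x}{(1-\rho x)(1-x)}=\frac{\log(1+\rho)}{1-\rho}$, and the resulting common derivative $\frac{\log(1-\rho)}{\rho(1-\rho)}+\frac{\log(1+\rho)}{\rho}$ (which extends continuously through $\rho=0$, so the constancy argument on all of $(-1,1)$ is sound). What your approach buys is self-containedness: the only dilogarithm fact needed is $\operatorname{Li}_2'(u)=-\log(1-u)/u$, i.e.\ the definition, with no functional equations cited; the price is that the argument is a verification rather than a derivation — one must already know the closed form to check it. The paper's route, by contrast, actually produces the answer from the integral but leans on two nontrivial identities from the dilogarithm literature. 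Both are complete proofs.
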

\begin{proof}
	Write,
	\begin{eqnarray}
		\int_{0}^{\rho}\frac{\log(1-\rho x )}{x(1-x)} \dd x
		&=& \int_{0}^{\rho}\frac{\log(1-\rho x )}{x} \dd x ~+~ \int_{0}^{\rho}\frac{\log(1-\rho x )}{1-x} \dd x \notag.
	\end{eqnarray}
	By \eqref{eq:polylog}, we have
	\begin{eqnarray}
		\int_{0}^{\rho}\frac{\log(1-\rho x )}{x} \dd x &=& - \operatorname{Li}_{2}(\rho^2). \label{eq:integral_1}
	\end{eqnarray}
	It remains to show that
	\begin{eqnarray}
		\int_0^{\rho}\frac{\log(1-\rho x )}{1-x} \dd x &=& \frac{1}{2}\big(\operatorname{Li}_{2}(\rho^2) - \log^2(1-\rho)\big). \label{eq:integral_1a}
	\end{eqnarray}
	Indeed, by substitution $v = \rho - \rho x $, we have
	\begin{eqnarray}\label{eq:integral_2}
		\int_{0}^{\rho}\frac{\log(1-\rho x )}{1-x} \dd x
		&=& \int_{\rho - \rho^2}^{\rho}\frac{\log(1-\rho + v )}{v} \dd v \notag \\
		&=&\int_{\rho - \rho^2}^{\rho}\frac{\log( 1 + \frac{v}{1-\rho} )}{v} \dd v  ~-~ \log^2(1-\rho).
	\end{eqnarray}
	Further, by substitution $w = -\frac{v}{1-\rho}$, we have
	\begin{eqnarray}
		\int_{\rho - \rho^2}^{\rho}\frac{\log( 1 + \frac{v}{1-\rho} )}{v} \dd v
		&=& - \int_{-\frac{\rho}{1-\rho}}^{-\rho}\frac{\log( 1 - w )}{w} \dd w \notag \\
		&=& \operatorname{Li}_{2}(-\rho) - \operatorname{Li}_{2}\Big(-\frac{\rho}{1-\rho}\Big) \notag \\
		&=& \operatorname{Li}_{2}(-\rho)  + \operatorname{Li}_{2}(\rho) + \frac{1}{2}\log^2(1-\rho) \label{eq:maximon_1}\\
		&=& \frac{1}{2}\big(\operatorname{Li}_{2}(\rho^2) + \log^2(1-\rho) \big), \label{eq:maximon_2}
	\end{eqnarray}
	where for \eqref{eq:maximon_1}--\eqref{eq:maximon_2} we apply the easily verifiable identities (see, e.g., \cite{maximon2003dilogarithm}):
	\begin{eqnarray*}
		\operatorname{Li}_{2}\Big(\frac{x}{x-1}\Big) &=& -\operatorname{Li}_{2}(x) - \frac{1}{2}\log^2(1-x), ~~~ x < 1, \\
		\operatorname{Li}_{2}(x) + \operatorname{Li}_{2}(-x) &=& \frac{1}{2}\operatorname{Li}_{2}(x^2),~~~ |x|<1.
	\end{eqnarray*}
	Thus, \eqref{eq:integral_2} and \eqref{eq:maximon_2} imply \eqref{eq:integral_1a},
	which concludes the proof.
\end{proof}

\begin{Lemma} \label{lemma:ineqs}
	Assume that $\sum_{j=1}^{\infty}\beta_j^2 < \infty$ and $|\rho|<1$.  Then, the following inequalities hold:
	\begin{enumerate}[label={(\rm \roman*)}]
		\item $\begin{aligned}
			\bigg|\sum_{l=p+1}^{\infty}\sum_{l'=l+1}^{\infty} \beta_l \beta_{l'} \rho^{l'-l}\bigg| ~~\leq~~  C \sum_{l=p+1}^{\infty} \beta_l^2.  \\ 		
		\end{aligned}$
		\item $\begin{aligned}[t]
			\bigg|\sum_{l=p+1}^{\infty}\sum_{l'=l+1}^{\infty}\beta_l \beta_{l'}\rho^{l' -l } (l'-l)\bigg| ~~\leq~~  C \sum_{l=p+1}^{\infty}\beta_{l}^2.\\
		\end{aligned}$
		\item $\begin{aligned}
			\bigg|\sum_{l=1}^{p} \sum_{l'=p+1}^{\infty }\beta_{l} \beta_{l'} \rho^{l'-l}\bigg| ~~\leq~~  C \sum_{l=p+1}^{\infty} \beta_{l}^{2}.
		\end{aligned}$
		\item $\begin{aligned}
			\bigg|\sum_{l=1}^{p} \sum_{l'=p+1}^{\infty }\beta_{l} \beta_{l'} \rho^{l'+l}\bigg| ~~\leq~~  C \sum_{l=p+1}^{\infty} \beta_{l}^{2}.
		\end{aligned}$
	\end{enumerate}
	
\end{Lemma}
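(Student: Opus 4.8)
The plan is to reduce every one of the four statements to a bound on a sum of absolute values and then feed it through the elementary device already used in the proof of Remark~\ref{remark1}: for any pair of indices, $|\beta_l\beta_{l'}|\,|\rho|^{|l-l'|}\le \tfrac12(\beta_l^2+\beta_{l'}^2)|\rho|^{|l-l'|}$, after which each single sum over a fixed index closes by the convergent geometric series $\sum_{m\ge1}|\rho|^m=\tfrac{|\rho|}{1-|\rho|}$ (for (i),(iii),(iv)) or by the arithmetic--geometric series $\sum_{m\ge1}m|\rho|^m=\tfrac{|\rho|}{(1-|\rho|)^2}$ (for (ii)). Throughout I keep $0<|\rho|<1$ fixed, so both series are finite, and I absorb their values into the generic constant $C$.

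Statements (i) and (ii) are the clean cases, because both summation indices exceed $p$. In (i) I write the left-hand side as $\sum_{l>p}\sum_{l'>l}|\beta_l||\beta_{l'}||\rho|^{l'-l}$, apply the splitting above, and reindex by the gap $m=l'-l\ge1$; each of the two resulting halves becomes a convergent factor $\sum_{m\ge1}|\rho|^m$ times a single sum of squares, and since that surviving sum ranges only over indices larger than $p$, the bound $C\sum_{l>p}\beta_l^2$ drops out. Statement (ii) is identical in structure, the only change being that the extra factor $(l'-l)=m$ replaces the geometric series by $\sum_{m\ge1}m|\rho|^m$, which is still finite, so the same tail sum is reproduced.

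The substantive work is in (iii) and (iv), where the summation couples a head index $l\le p$ to a tail index $l'>p$. For (iii) I would reindex by $m=l'-p\ge1$ and $k=p-l\ge0$, so that $|\rho|^{l'-l}=|\rho|^{m}|\rho|^{k}$; for (iv) the exponent factorises even more directly as $|\rho|^{l'+l}=|\rho|^{l'}|\rho|^{l}$. After the symmetric splitting, the half carrying the tail index $l'$ is immediately controlled: it is a constant multiple of $\sum_{l'>p}\beta_{l'}^2|\rho|^{\,\cdot}\le C\sum_{l'>p}\beta_{l'}^2$, exactly the required right-hand side.

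The main obstacle is the complementary half, which carries the head index. After the geometric summation this half reduces in (iii) to a weighted head sum $\sum_{l=1}^{p}\beta_l^2|\rho|^{p-l}$, and in (iv) to $|\rho|^{p+1}\sum_{l=1}^{p}\beta_l^2|\rho|^{l}$, the latter being at most $C|\rho|^{p+1}$ since $\sum_{l\ge1}\beta_l^2|\rho|^l=\beta(|\rho|)<\infty$. The geometric weight concentrates these sums near the boundary $l\approx p$, and the crux is to argue that they too are dominated by $C\sum_{l'>p}\beta_{l'}^2$ with $C$ independent of $p$. I expect this comparison of the weighted head contribution against the tail to be the delicate step: it is precisely here that one must use how $\beta_l$ decays near $l=p$ relative to the tail $\sum_{l'>p}\beta_{l'}^2$, rather than square-summability alone, so I would resolve it by exploiting the localization of the geometric kernel together with the assumed decay of the coefficients.
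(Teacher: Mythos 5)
Your handling of parts (i) and (ii) is correct and coincides with the paper's: symmetrize via $|\beta_l\beta_{l'}|\,|\rho|^{l'-l}\le\tfrac12(\beta_l^2+\beta_{l'}^2)|\rho|^{l'-l}$ and close each half with $\sum_{m\ge1}|\rho|^m$ or $\sum_{m\ge1}m|\rho|^m$; since both indices exceed $p$ there, only tail sums of squares survive. In (iii) and (iv) your treatment of the half carrying the tail index $l'$ also matches the paper.

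The plan stalls at exactly the step you flag as delicate, and it cannot be completed as stated, because the literal inequalities (iii) and (iv) are false under the hypothesis $\sum_j\beta_j^2<\infty$ alone. Take $\beta_l=2^{-l}$ and $\rho\in(1/2,1)$: the left-hand side of (iii) factorizes as $\big(\sum_{l=1}^{p}(2\rho)^{-l}\big)\big(\sum_{l'>p}(\rho/2)^{l'}\big)$, which is of exact order $(\rho/2)^{p}$, while $\sum_{l>p}\beta_l^2$ is of order $4^{-p}$, so the ratio grows like $(2\rho)^{p}$; the same example breaks (iv). Correspondingly, the head-index half you isolate, $\sum_{l=1}^{p}\beta_l^2|\rho|^{p-l+1}$, is genuinely not dominated by the tail, and there is no ``assumed decay of the coefficients'' in the statement of the lemma to invoke. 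What the paper's own proof actually delivers for (iii)--(iv) is the weaker bound $o(1)+C\sum_{l=p+1}^{\infty}\beta_l^2$: the head sum is shown to vanish by splitting it at $\lfloor\sqrt p\rfloor$, as in \eqref{a3}, which gives $|\rho|^{p-\lfloor\sqrt p\rfloor+1}\sum_{l\ge1}\beta_l^2+C\sum_{l>\lfloor\sqrt p\rfloor}\beta_l^2\to0$; the $o(p^{-1/2})$ rate used downstream in Lemma \ref{lemma:op} is then recovered only under the stronger hypotheses $\sup_{j}|\beta_j|j^{\alpha}<\infty$, $\alpha>1/2$, imposed there (compare Lemma \ref{lemma:bj1}). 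So to finish your argument you must either settle for this weaker $o(1)$ conclusion via the $\lfloor\sqrt p\rfloor$ splitting, or add an explicit decay hypothesis; the bound $C\sum_{l>p}\beta_l^2$ you aim for in (iii) and (iv) is unattainable from square-summability alone.
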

\begin{proof}
	See the proof in Supplementary material, Section~\ref{sec:supp_ineqs}.
\end{proof}

\begin{Lemma} \label{lemma:bj1}
	Assume that $\sup_{j \geq 1} |\beta_j| j^{\alpha} < \infty$, $\alpha > 1/2$ and that $|\rho| < 1$. Then,
	\begin{align*}
		\bigg|\sum_{j=1}^{p}\beta_{j} \rho^{p-j} \bigg|  = o(p^{-1/4}).
	\end{align*}
\end{Lemma}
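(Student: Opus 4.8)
The plan is to exploit the polynomial decay $|\beta_j| \le C j^{-\alpha}$ furnished by the hypothesis $\sup_{j\ge 1}|\beta_j| j^{\alpha} < \infty$, and then to split the sum into a ``head'' near $j=p$, where $\beta_j$ is genuinely small because $j$ is of order $p$, and a ``tail'' near $j=1$, where the geometric weight $|\rho|^{p-j}$ is exponentially small. First I would apply the triangle inequality together with this bound to write
\begin{eqnarray*}
	\bigg|\sum_{j=1}^{p}\beta_j \rho^{p-j}\bigg| &\le& C\sum_{j=1}^{p} j^{-\alpha}|\rho|^{p-j}.
\end{eqnarray*}

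Next I would split the range of summation at $j = \lfloor p/2\rfloor$. On the upper range $j > p/2$ one has $j^{-\alpha} \le (p/2)^{-\alpha} = Cp^{-\alpha}$, so, substituting $k = p-j$ and summing the geometric series,
\begin{eqnarray*}
	\sum_{j > p/2} j^{-\alpha}|\rho|^{p-j} &\le& Cp^{-\alpha}\sum_{k=0}^{\infty}|\rho|^{k} ~=~ \frac{C}{1-|\rho|}\,p^{-\alpha} ~=~ \mathcal{O}(p^{-\alpha}).
\end{eqnarray*}
On the lower range $j \le p/2$ the geometric weight is uniformly small, $|\rho|^{p-j} \le |\rho|^{p/2}$, while the remaining factor is crudely bounded by $\sum_{j=1}^{p} j^{-\alpha} \le Cp$, so this contribution is $\mathcal{O}(p\,|\rho|^{p/2})$, which decays exponentially in $p$ and is therefore $o(p^{-1/4})$.

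Finally I would combine the two estimates: the whole sum is $\mathcal{O}(p^{-\alpha}) + \mathcal{O}(p\,|\rho|^{p/2})$, and since $\alpha > 1/2 > 1/4$ we have $p^{-\alpha} = o(p^{-1/4})$, which yields the claim. There is no deep obstacle here; the only point requiring mild care is the choice of split point, which must simultaneously drive the head's polynomial prefactor down to order $p^{-\alpha}$ and leave the tail multiplied by an exponentially small factor. One could instead estimate $\sum_{j\le p/2} j^{-\alpha}$ sharply (it is $\mathcal{O}(p^{1-\alpha})$, $\mathcal{O}(\log p)$, or $\mathcal{O}(1)$ according to whether $\alpha<1$, $\alpha=1$, or $\alpha>1$), but bounding it by $\mathcal{O}(p)$ sidesteps these case distinctions while remaining negligible against the exponential weight.
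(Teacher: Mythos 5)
Your proof is correct and follows essentially the same strategy as the paper's: split the sum into a range near $j=p$ where $|\beta_j|\le Cj^{-\alpha}$ is polynomially small and a range near $j=1$ where $|\rho|^{p-j}$ is exponentially small, then sum a geometric series on each piece. The only difference is the split point ($p/2$ rather than the paper's $\lfloor\sqrt p\rfloor$), which in fact gives the slightly sharper bound $\mathcal{O}(p^{-\alpha})$ in place of $\mathcal{O}(p^{-\alpha/2})$, though both comfortably suffice for $o(p^{-1/4})$ under $\alpha>1/2$.
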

\begin{proof}
	We have
	\begin{eqnarray}
		\bigg| \sum_{j=1}^{p}\beta_j \rho^{p-j} \bigg| &\leq&   \sum_{j=1}^{\lfloor \sqrt{p} \rfloor} |\beta_j| |\rho|^{p-j} + \sum_{j=\lfloor \sqrt{p} \rfloor + 1}^{p} |\beta_j| |\rho|^{p-j}  \notag \\
		&\leq&  \sup_{j \geq 1}|\beta_j| \sum_{j=1}^{\lfloor \sqrt{p} \rfloor}|\rho|^{p-j} + p^{-\alpha/2} \sum_{j = \lfloor \sqrt{p} \rfloor + 1}^{p} |\beta_j| p^{\alpha/2} |\rho|^{p-j} \notag \\
		&\leq&  \sup_{j \geq 1}|\beta_j| \sum_{j=1}^{\lfloor \sqrt{p} \rfloor}|\rho|^{p-j} + p^{-\alpha/2} \sup_{j \geq 1} |\beta_j| j^{\alpha} \sum_{j = \lfloor \sqrt{p} \rfloor + 1}^{p} |\rho|^{p-j} \notag \\
		&\leq&  C\bigg( \sum_{j=1}^{\lfloor \sqrt{p} \rfloor}|\rho|^{p-j} + p^{-\alpha/2} \sum_{j=\lfloor \sqrt{p} \rfloor + 1}^{p} |\rho|^{p-j} \bigg)  \notag \\
		&\leq&  C\Big(|\rho|^{p - \lfloor \sqrt{p} \rfloor}  + p^{-\alpha/2} \Big). \notag
	\end{eqnarray}
	Here we used the fact that $\sum_{j=\lfloor \sqrt{p} \rfloor + 1}^{p} |\rho|^{p-j} \to (1-|\rho|)^{-1}<\infty$.
	Thus,
	\begin{eqnarray}
		p^{1/4} \bigg| \sum_{j=1}^{p}\beta_j \rho^{p-j} \bigg|\leq C \left(  p^{1/4}|\rho|^{p - \lfloor \sqrt{p} \rfloor}  + p^{\frac{1}{4}-\frac{\alpha}{2}} \right) ~\to~ 0.
	\end{eqnarray}
\end{proof}
\begin{remark} Obviously, the assumption $\sup_{j\geq 1} |\beta_j| j^{\alpha} < \infty$, for $\alpha > 1/2$, implies that \mbox{$\sum_{j=1}^{\infty}\beta_j^2 < \infty$:}
	\begin{eqnarray*}
		\sum_{j=1}^{\infty}\beta_j^2 &=& \sum_{j=1}^{\infty}\beta_j^2 j^{2 \alpha }j^{-2\alpha} ~~\leq~~ \sup_{j\geq 1} \beta_j^2 j^{2\alpha} \sum_{k=1}^{\infty} k^{-2\alpha}   ~~<~~ \infty.
	\end{eqnarray*}
\end{remark}

\begin{Lemma} \label{lemma:kappa_2p_op}
	Assume that the assumptions of Theorem \ref{th:wider} hold. Then,
	\begin{eqnarray*}
		\kappa_{2,p} = o(p).
	\end{eqnarray*}
\end{Lemma}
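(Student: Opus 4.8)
The plan is to prove the stronger statement that $\kappa_{2,p}$ is bounded uniformly in $p$; since a bounded sequence is trivially $o(p)$, this suffices. One could instead simply invoke the fact, recorded in the introduction and in Remark~\ref{remark1}, that $\kappa_{2,p}\to\kappa_2<\infty$, from which $\kappa_{2,p}=O(1)=o(p)$ is immediate; but I prefer a direct, self-contained bound that uses only $\sum_j\beta_j^2<\infty$ and $|\rho|<1$.

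Write $r:=|\rho|<1$ and recall that $\kappa_{2,p}=\sum_{k=1}^p\big(\sum_{l=1}^p\beta_l\rho^{|k-l|}\big)^2$. First I would split the geometric weight as $r^{|k-l|}=r^{|k-l|/2}\,r^{|k-l|/2}$ and apply the Cauchy--Schwarz inequality to the inner sum, obtaining
\[
\Big(\sum_{l=1}^p\beta_l\rho^{|k-l|}\Big)^2\le\Big(\sum_{l=1}^p|\beta_l|\,r^{|k-l|}\Big)^2\le\Big(\sum_{l=1}^p\beta_l^2\,r^{|k-l|}\Big)\Big(\sum_{l=1}^p r^{|k-l|}\Big).
\]
The last factor is dominated by the two-sided geometric sum $\sum_{m\in\mathbb Z}r^{|m|}=\frac{1+r}{1-r}$, which is finite precisely because $r<1$.

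Next I would sum over $k=1,\dots,p$, interchange the order of summation, and apply the same geometric bound once more, now to the sum over $k$:
\[
\kappa_{2,p}\le\frac{1+r}{1-r}\sum_{l=1}^p\beta_l^2\sum_{k=1}^p r^{|k-l|}\le\Big(\frac{1+r}{1-r}\Big)^2\sum_{l=1}^p\beta_l^2\le\Big(\frac{1+r}{1-r}\Big)^2\sum_{l=1}^\infty\beta_l^2=:C<\infty.
\]
Thus $\kappa_{2,p}\le C$ for every $p$, whence $\kappa_{2,p}=O(1)=o(p)$, as claimed.

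There is no genuine obstacle here: the argument amounts to two applications of Cauchy--Schwarz (equivalently, a one-line discrete Young inequality for the convolution of $\beta\in\ell^2$ with the kernel $(r^{|m|})\in\ell^1$), and the only point to watch is the passage from the finite ranges $1\le k,l\le p$ to the full lattice $\mathbb Z$ when summing the geometric weights, which is legitimate because all terms are nonnegative. As a by-product, the same computation reproves finiteness of $\kappa_2$ and yields the explicit bound $\kappa_2\le\big(\tfrac{1+|\rho|}{1-|\rho|}\big)^2\beta(1)$.
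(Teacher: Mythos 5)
Your proof is correct, and it takes a genuinely different (and in fact sharper) route than the paper's. The paper expands the square into a double sum over $(l_1,l_2)$, bounds $\sum_{k=1}^p|\rho|^{|k-l_1|+|k-l_2|}$ by a constant uniformly in $l_1,l_2$, and thereby reduces the claim to $\kappa_{2,p}\le C\big(\sum_{l=1}^p|\beta_l|\big)^2$; it then proves $\sum_{l=1}^p|\beta_l|=o(p^{1/2})$ by splitting that sum at $\lfloor\sqrt p\rfloor$ and applying Cauchy--Schwarz to each block together with the vanishing of the tail $\sum_{l>\lfloor\sqrt p\rfloor}\beta_l^2$. That route delivers exactly the stated $o(p)$ and nothing more, since for a general $\ell^2$ sequence the quantity $\sum_{l\le p}|\beta_l|$ need not stay bounded. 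Your weighted Cauchy--Schwarz (equivalently the $\ell^1*\ell^2\to\ell^2$ Young inequality for the kernel $(|\rho|^{|m|})_{m\in\mathbb Z}$) avoids the $\ell^1$-type quantity altogether and yields the uniform bound $\kappa_{2,p}\le\big(\frac{1+|\rho|}{1-|\rho|}\big)^2\sum_{l=1}^{\infty}\beta_l^2$, i.e.\ $\kappa_{2,p}=O(1)$, which is strictly stronger than the lemma and consistent with the paper's separate observation that $\kappa_{2,p}\to\kappa_2<\infty$. All steps check out: the interchange of the finite sums is harmless, enlarging the summation range of the nonnegative geometric weights to all of $\mathbb Z$ is legitimate, and the degenerate case $\rho=0$ causes no trouble since the geometric factors then equal $1$. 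The only trade-off is cosmetic: the paper's version reuses its display \eqref{uii} and the $\lfloor\sqrt p\rfloor$-splitting trick that also appears elsewhere in the appendix, whereas yours is self-contained and gives the explicit constant as a by-product.
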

\begin{proof}
	Observe, that
	\begin{eqnarray}
		\kappa_{2,p} ~=~ \sum_{k=1}^p \bigg(\sum_{l=1}^p \beta_l \rho^{|k-l|}\bigg)^2
		&=& \sum_{k=1}^p \sum_{l_1,l_2=1}^p \beta_{l_1}\beta_{l_2} \rho^{|k-l_1|+|k-l_2|}\nonumber\\
		&\le &  \sum_{l_1,l_2=1}^p |\beta_{l_1}| |\beta_{l_2}| \sum_{k=1}^p |\rho|^{|k-l_1|+|k-l_2|}\nonumber\\
		&\le &  C \bigg(\sum_{l=1}^p |\beta_{l_1}|\bigg)^2\label{ub} \\
		&=&  o(p) \notag
	\end{eqnarray}
	where \eqref{ub} follows from \eqref{uii}. Meanwhile,
	$
	\sum_{l=1}^p |\beta_{l_1}| = o(p^{1/2})$, since
	\begin{eqnarray*}
		\sum_{l=1}^p |\beta_l| &=& \sum_{l=1}^{\lfloor p^{1/2}\rfloor} |\beta_l| ~+~
		\sum_{l=\lfloor p^{1/2}\rfloor+1}^p |\beta_l| \\
		&\le& p^{1/4} \bigg(\sum_{l=1}^\infty \beta^2_l\bigg)^{1/2}~+~ p^{1/2} \bigg(\sum_{l=\lfloor p^{1/2}\rfloor+1}^\infty \beta_l^2\bigg)^{1/2} \ = \ o(p^{1/2}).
	\end{eqnarray*}
\end{proof}

\begin{Lemma} \label{lemma:lambda_3}
	Assume that $\sum_{j=1}^{\infty} \beta_j^2 < \infty$ and $|\rho|<1$. Define $\theta_{k}^{(p)} = \sum_{j=1}^{p} \beta_j \rho^{|k-j|}$. Then,
	\begin{eqnarray} \label{eq:lambda_3_3}
		\bigg|	\sum_{i,j,k=1}^{p}\big(\rho^{|i-j|} + \theta_{i}^{(p)} \theta_{j}^{(p)}  \big)\big(\rho^{|i-k|} + \theta_{i}^{(p)} \theta_{k}^{(p)}  \big)\big(\rho^{|k-j|} + \theta_{k}^{(p)} \theta_{j}^{(p)}  \big) \bigg|  &=& o(p^{3/2}).~~~~~~~~~~
	\end{eqnarray}
\end{Lemma}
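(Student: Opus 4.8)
The plan is to multiply out the three binomials $\big(\rho^{|i-j|}+\theta_i^{(p)}\theta_j^{(p)}\big)$, $\big(\rho^{|i-k|}+\theta_i^{(p)}\theta_k^{(p)}\big)$, $\big(\rho^{|k-j|}+\theta_k^{(p)}\theta_j^{(p)}\big)$ and to sort the eight resulting terms by the number of factors $\theta_a^{(p)}\theta_b^{(p)}$ they carry. Writing $R:=(\rho^{|i-j|})_{i,j=1}^p$ for the $p\times p$ KMS matrix and $\theta:=(\theta_1^{(p)},\dots,\theta_p^{(p)})'$, routine bookkeeping identifies the triple sum in \eqref{eq:lambda_3_3} with
\begin{equation*}
	\operatorname{tr}(R^3)+3\,\theta' R^2\theta+3\,(\theta'\theta)(\theta' R\theta)+(\theta'\theta)^3 .
\end{equation*}
The single term with no $\theta$-factors is $\operatorname{tr}(R^3)$; each of the three terms with exactly one factor $\theta_a^{(p)}\theta_b^{(p)}$ reduces to $\theta' R^2\theta$; each of the three with two such factors reduces, after extracting the repeated $\sum_k(\theta_k^{(p)})^2$, to $(\theta'\theta)(\theta' R\theta)$; and the purely quadratic term is $(\theta'\theta)^3$. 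I would also note the identifications $\theta'\theta=\kappa_{2,p}$ and $\theta' R\theta=\kappa_{3,p}$, immediate from \eqref{eq:kappa2}--\eqref{eq:kappa3}; in particular all four grouped pieces are nonnegative, so the absolute value in the statement causes no difficulty.

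It then suffices to bound the four pieces. The structural input is that the KMS matrix has spectral norm bounded uniformly in $p$: by Gershgorin's theorem, every eigenvalue $\lambda$ of $R$ satisfies $\lambda\le\max_i\sum_j|\rho|^{|i-j|}\le(1+|\rho|)/(1-|\rho|)=:\Lambda<\infty$. Together with Remark \ref{remark:trace}, which gives $\operatorname{tr}(R^2)=\sum_{i,j}\rho^{2|i-j|}=\mathcal{O}(p)$, this yields $\operatorname{tr}(R^3)\le\lambda_{\max}(R)\operatorname{tr}(R^2)=\mathcal{O}(p)$. For the remaining pieces I would use $\theta' R^2\theta=\|R\theta\|_2^2\le\Lambda^2\,\theta'\theta=\Lambda^2\kappa_{2,p}$, together with the boundedness $\kappa_{2,p}=\mathcal{O}(1)$ and $\kappa_{3,p}=\mathcal{O}(1)$, which hold since $\kappa_{2,p}\to\kappa_2<\infty$ and $\kappa_{3,p}\to\kappa_3<\infty$ by Remark \ref{remark1}. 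Hence $\theta' R^2\theta=\mathcal{O}(1)$, $(\theta'\theta)(\theta' R\theta)=\kappa_{2,p}\kappa_{3,p}=\mathcal{O}(1)$ and $(\theta'\theta)^3=\kappa_{2,p}^3=\mathcal{O}(1)$, so the whole expression is $\mathcal{O}(p)+\mathcal{O}(1)=\mathcal{O}(p)=o(p^{3/2})$, which is even a little stronger than required.

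The main obstacle is not the all-$R$ term but the higher-order $\theta$-contributions. The crude estimate $\kappa_{2,p}=o(p)$ supplied by Lemma \ref{lemma:kappa_2p_op} is useless here, since it would only bound $(\theta'\theta)^3=\kappa_{2,p}^3$ by $o(p^3)$ and $\kappa_{2,p}\kappa_{3,p}$ by $o(p^2)$; what is genuinely needed is the stronger fact that $\kappa_{2,p}$ and $\kappa_{3,p}$ stay bounded, i.e.\ converge to finite limits under $\sum_j\beta_j^2<\infty$. Should one wish to avoid invoking the limits, the same $\mathcal{O}(1)$ bounds follow directly from $\kappa_{2,p}=\sum_{k,l,l'}\beta_l\beta_{l'}\rho^{|k-l|+|k-l'|}$ via the convolution estimate $\sum_k|\rho|^{|k-l|+|k-l'|}\le C(1+|l-l'|)|\rho|^{|l-l'|}$ and an AM--GM split $|\beta_l\beta_{l'}|\le\tfrac12(\beta_l^2+\beta_{l'}^2)$, and analogously for $\kappa_{3,p}$.
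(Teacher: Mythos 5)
Your proof is correct, and the algebra checks out: expanding the triple product and grouping the eight terms by symmetry into $\operatorname{tr}(R^3)$, $3\,\theta'R^2\theta$, $3\,(\theta'\theta)(\theta'R\theta)$ and $(\theta'\theta)^3$ is exactly the decomposition the paper uses (its cases (i)--(iv) in the supplementary proof of Lemma \ref{lemma:lambda_3}), the identifications $\theta'\theta=\kappa_{2,p}$ and $\theta'R\theta=\kappa_{3,p}$ are right, and, like you, the paper ultimately relies on the boundedness of $\sum_k(\theta_k^{(p)})^2$ rather than the weaker $o(p)$ estimate of Lemma \ref{lemma:kappa_2p_op}. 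Where you genuinely diverge is in how the pieces are bounded. The paper computes $\sum_{i,j,k}\rho^{|i-j|+|i-k|+|k-j|}$ and the mixed sums explicitly, splitting over the orderings of $i,j,k$ and summing geometric series, and treats the cross terms with the AM--GM split $|\theta_i^{(p)}\theta_k^{(p)}|\rho^{\cdots}\le \rho^{2(\cdots)}+(\theta_i^{(p)})^2(\theta_k^{(p)})^2$, obtaining $\mathcal{O}(p)$ for each group. You instead observe once that the KMS matrix $R$ is positive semi-definite with operator norm bounded uniformly in $p$ (Gershgorin, or the maximal absolute row sum $\le(1+|\rho|)/(1-|\rho|)$), which gives $\operatorname{tr}(R^3)\le\lambda_{\max}(R)\operatorname{tr}(R^2)=\mathcal{O}(p)$ via Remark \ref{remark:trace} and makes every $\theta$-containing group $\mathcal{O}(1)$. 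This is shorter, dispenses with the index case analysis entirely, yields sharper bounds on the individual pieces, and correctly disposes of the absolute value by noting all four groups are nonnegative; the cost is invoking spectral machinery where the paper's argument stays elementary and self-contained. Both routes give $\mathcal{O}(p)=o(p^{3/2})$, so nothing is missing.
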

\begin{proof}
	See the proof in Supplementary material, Section~\ref{sec:supp_lambda_3}.
\end{proof}

\subsection{Proof of Lemma \ref{lemma:alt_expressions} }
\label{ap:lemma45}

\noindent
Here and throughout the proof we employ the notation as in Definition \ref{def:main}.

\noindent {\rm (i)}
Note that, by (\ref{beta}) and (\ref{gamma}), we have
\begin{eqnarray*}
	\kappa_{1,p} 		&=& \sum_{k=1}^p
	\beta_k^2 + 2\sum_{k=2}^p\sum_{l=1}^{k-1}\beta_k \beta_l \rho^{k-l}  \ \to\ \beta(1) + 2b_{1}(\rho)  ~~\text{ as } \ p \to \infty.
\end{eqnarray*}

\noindent{\rm (ii)}
Write
\begin{eqnarray*}
	\kappa_{2,p}   &=&  \sum_{l=1}^{p}\sum_{k=1}^{p}\beta_l^2 \rho^{2|k-l|}+ 2\sum_{l' > l }\sum_{k=1}^{p}\beta_l \beta_{l'} \rho^{|k-l|}\rho^{|k-l'|}.
\end{eqnarray*}
From here, it is straightforward to see that
\begin{eqnarray}
	\kappa_{2,p} &\to& \beta(1) \frac{1+\rho^2}{1-\rho^2} -\beta(\rho^2)\frac{1}{1-\rho^2} + 2\Big( b_{1}^{(1)}(\rho) ~+~ b_{1}(\rho)\frac{1+\rho^2}{1-\rho^2} ~-~ b_{2}(\rho)\frac{1}{1-\rho^2} \Big).~~~~~~~ \label{eq:supp_1}
\end{eqnarray}
Technical details of the proof of \eqref{eq:supp_1} are presented in Supplementary material, Section~\ref{sec:supp_a2}. 
\smallskip

\noindent	{\rm (iii)}
Consider
\begin{eqnarray} \label{eq:part3_1}
	\kappa_{3,p} &=& \sum_{l=1}^p \beta_l^2 J_{1}(l) + 2 \sum_{l<l'}\beta_{l} \beta_{l'}  J_2(l,l'),~~~~~~~~~~
\end{eqnarray}
where
\begin{eqnarray}
	J_{1}(l) &:=&  \sum_{k,k'=1}^p\rho^{|k-k'|} \rho^{|k-l|} \rho^{|k'-l|} {\bf 1}_{\{l=l'\}}, \label{J1l} \\
	J_{2}(l,l') &:=&  \sum_{k,k'=1}^p\rho^{|k-k'|} \rho^{|k-l|} \rho^{|k'-l'|} {\bf 1}_{\{l<l'\}}. \label{J2ll}
\end{eqnarray}
Then, it is straightforward to see that, as $p \to \infty$, using the notation in Definition~\ref{def:main}, we have that
\begin{eqnarray}
	\sum_{l=1}^{p}\beta_{l}^{2} J_{1}(l) &\to& \beta(1) \frac{1+4 \rho^2+\rho^4}{(1-\rho^2)^2}- \beta(\rho^2) \frac{1+3 \rho^2}{(1-\rho^2)^2}-\frac{2}{1 - \rho^2} \beta^{(1)}(\rho^2), \label{eq:supp_j1}
\end{eqnarray}
and
\begin{eqnarray}
	\sum_{l'>l}\beta_{l}^2 J_{2}(l,l')
	&\to& \frac{1}{2(1-\rho^2)^2}\big( b^{(2)}(\rho) (1-\rho^2)^2
	+ 3b_{1}^{(1)}(\rho) (1-\rho^4) + 2b_{1}(\rho)(1 + 4\rho^2 + \rho^4) \notag\\
	&&-~ 2b_{2}^{(1)}(\rho)(1-\rho^2)- 2 b_{2}(\rho)(1+3\rho^2) \big). \label{eq:supp_j2}
\end{eqnarray}
Technical details of the proof of \eqref{eq:supp_j1}--\eqref{eq:supp_j2} are omitted here and presented in the Supplementary material~\ref{sec:supp_a3}. This concludes the proof.

% \bibliography{vg_reference}

\begin{thebibliography}{}

\bibitem[\protect\citeauthoryear{Belloni, Chen, Chernozhukov, and
  Hansen}{Belloni et~al.}{2012}]{Chernozhukov2013}
Belloni, A., D.~Chen, V.~Chernozhukov, and C.~Hansen (2012).
\newblock Sparse models and methods for optimal instruments with an application
  to eminent domain.
\newblock {\em Econometrica\/}~{\em 80\/}(6), 2369--2429.

\bibitem[\protect\citeauthoryear{Belloni, Chernozhukov, Chetverikov, Hansen,
  and Kato}{Belloni et~al.}{2018}]{belloni2018high}
Belloni, A., V.~Chernozhukov, D.~Chetverikov, C.~Hansen, and K.~Kato (2018).
\newblock High-dimensional econometrics and regularized {GMM}.
\newblock {\em ArXiv preprint arXiv:1806.01888\/}.

\bibitem[\protect\citeauthoryear{Belloni, Chernozhuv, and Wang}{Belloni
  et~al.}{2011}]{belloni2011square}
Belloni, A., V.~Chernozhuv, and L.~Wang (2011).
\newblock Square-root lasso: pivotal recovery of sparse signals via conic
  programming.
\newblock {\em Biometrika\/}~{\em 98\/}(4), 791--806.

\bibitem[\protect\citeauthoryear{Caner and Kock}{Caner and
  Kock}{2018}]{caner2018asymptotically}
Caner, M. and A.~B. Kock (2018).
\newblock Asymptotically honest confidence regions for high dimensional
  parameters by the desparsified conservative {L}asso.
\newblock {\em Journal of Econometrics\/}~{\em 203\/}(1), 143--168.

\bibitem[\protect\citeauthoryear{Cha, Chiang, and Sasaki}{Cha
  et~al.}{2021}]{cha2021inference}
Cha, J., H.~D. Chiang, and Y.~Sasaki (2021).
\newblock Inference in high-dimensional regression models without the exact or
  $l^p$ sparsity.
\newblock {\em ArXiv preprint arXiv:2108.09520\/}.

\bibitem[\protect\citeauthoryear{Dicker}{Dicker}{2014}]{dicker2014}
Dicker, L.~H. (2014, 03).
\newblock {Variance estimation in high-dimensional linear models}.
\newblock {\em Biometrika\/}~{\em 101\/}(2), 269--284.

\bibitem[\protect\citeauthoryear{Fikioris}{Fikioris}{2018}]{FIKIORIS2018182}
Fikioris, G. (2018).
\newblock Spectral properties of {Kac–Murdock–Szeg\"{o}} matrices with a
  complex parameter.
\newblock {\em {Linear Algebra and its Applications}\/}~{\em 553}, 182--210.

\bibitem[\protect\citeauthoryear{Gaunt}{Gaunt}{2013}]{phdthesis_gaunt}
Gaunt, R. (2013, 03).
\newblock {\em Rates of Convergence of Variance-Gamma Approximations via
  Stein’s Method}.
\newblock Ph.\ D. thesis, The Queen’s College, University of Oxford.

\bibitem[\protect\citeauthoryear{Gaunt}{Gaunt}{2019}]{RePEc:bla:stanee:v:73:y:2019:i:2:p:176-179}
Gaunt, R.~E. (2019, May).
\newblock A note on the distribution of the product of zero‐mean correlated
  normal random variables.
\newblock {\em Statistica Neerlandica\/}~{\em 73\/}(2), 176--179.

\bibitem[\protect\citeauthoryear{Gold, Lederer, and Tao}{Gold
  et~al.}{2020}]{gold2020inference}
Gold, D., J.~Lederer, and J.~Tao (2020).
\newblock Inference for high-dimensional instrumental variables regression.
\newblock {\em Journal of Econometrics\/}~{\em 217\/}(1), 79--111.

\bibitem[\protect\citeauthoryear{Guo, Ćevid, and Bühlmann}{Guo
  et~al.}{2021}]{guo2021doubly}
Guo, Z., D.~Ćevid, and P.~Bühlmann (2021).
\newblock Doubly {D}ebiased {L}asso: high-dimensional inference under hidden
  confounding.
\newblock {\em ArXiv preprint arXiv:2004.03758\/}.

\bibitem[\protect\citeauthoryear{Ing}{Ing}{2007}]{ing2007accumulated}
Ing, C.-K. (2007).
\newblock Accumulated prediction errors, information criteria and optimal
  forecasting for autoregressive time series.
\newblock {\em The Annals of Statistics\/}~{\em 35\/}(3), 1238--1277.

\bibitem[\protect\citeauthoryear{Ing}{Ing}{2020}]{ing2019model}
Ing, C.-K. (2020).
\newblock {Model selection for high-dimensional linear regression with
  dependent observations}.
\newblock {\em The Annals of Statistics\/}~{\em 48\/}(4), 1959--1980.

\bibitem[\protect\citeauthoryear{Javanmard and Montanari}{Javanmard and
  Montanari}{2014}]{javanmard2014confidence}
Javanmard, A. and A.~Montanari (2014).
\newblock Confidence intervals and hypothesis testing for high-dimensional
  regression.
\newblock {\em The Journal of Machine Learning Research\/}~{\em 15\/}(1),
  2869--2909.

\bibitem[\protect\citeauthoryear{Kac, Murdock, and Szeg\H{o}}{Kac
  et~al.}{1953}]{kms1953}
Kac, M., W.~Murdock, and G.~Szeg\H{o} (1953).
\newblock On the eigen-values of certain {H}ermitian forms.
\newblock {\em {Journal of Linear Rational Mechanics and Analysis}\/}~{\em 2},
  767--800.

\bibitem[\protect\citeauthoryear{Kotz, Kozubowski, and Podg{\'o}rski}{Kotz
  et~al.}{2001}]{Kotz2001TheLD}
Kotz, S., T.~Kozubowski, and K.~Podg{\'o}rski (2001).
\newblock {\em {The Laplace Distribution and Generalizations: A Revisit with
  Applications to Communications, Economics, Engineering, and Finance}}.
\newblock Boston: Birkh\"{a}user.

\bibitem[\protect\citeauthoryear{Madan, Carr, and Chang}{Madan
  et~al.}{1998}]{RePEc:oup:revfin:v:2:y:1998:i:1:p:79-105.}
Madan, D.~B., P.~P. Carr, and E.~C. Chang (1998).
\newblock The variance gamma process and option pricing.
\newblock {\em Review of Finance\/}~{\em 2\/}(1), 79--105.

\bibitem[\protect\citeauthoryear{Maximon}{Maximon}{2003}]{maximon2003dilogarithm}
Maximon, L.~C. (2003).
\newblock The dilogarithm function for complex argument.
\newblock {\em {Proceedings of the Royal Society of London. Series A:
  Mathematical, Physical and Engineering Sciences}\/}~{\em 459\/}(2039),
  2807--2819.

\bibitem[\protect\citeauthoryear{Meinshausen}{Meinshausen}{2007}]{MEINSHAUSEN2007374}
Meinshausen, N. (2007).
\newblock Relaxed {L}asso.
\newblock {\em {Computational Statistics \& Data Analysis}\/}~{\em 52\/}(1),
  374--393.

\bibitem[\protect\citeauthoryear{Morris}{Morris}{1979}]{10.2307/2006312}
Morris, R. (1979).
\newblock The dilogarithm function of a real argument.
\newblock {\em {Mathematics of Computation}\/}~{\em 33\/}(146), 778--787.

\bibitem[\protect\citeauthoryear{Ning, Peng, and Tao}{Ning
  et~al.}{2020}]{ning2020doubly}
Ning, Y., S.~Peng, and J.~Tao (2020).
\newblock Doubly robust semiparametric difference-in-differences estimators
  with high-dimensional data.
\newblock {\em ArXiv preprint arXiv:2009.03151\/}.

\bibitem[\protect\citeauthoryear{Shibata}{Shibata}{1980}]{shibata1980asymptotically}
Shibata, R. (1980).
\newblock Asymptotically efficient selection of the order of the model for
  estimating parameters of a linear process.
\newblock {\em The Annals of Statistics\/}, 147--164.

\bibitem[\protect\citeauthoryear{Tibshirani}{Tibshirani}{1996}]{Tibshirani}
Tibshirani, R. (1996).
\newblock Regression shrinkage and selection via the lasso.
\newblock {\em Journal of the Royal Statistical Society: Series B
  (Methodological)\/}~{\em 58\/}(1), 267--288.

\bibitem[\protect\citeauthoryear{Zhang and Zhang}{Zhang and
  Zhang}{2014}]{zhang2014confidence}
Zhang, C.-H. and S.~S. Zhang (2014).
\newblock Confidence intervals for low dimensional parameters in high
  dimensional linear models.
\newblock {\em Journal of the Royal Statistical Society: Series B (Statistical
  Methodology)\/}~{\em 76\/}(1), 217--242.

\bibitem[\protect\citeauthoryear{Zou}{Zou}{2006}]{zou2006adaptive}
Zou, H. (2006).
\newblock The adaptive lasso and its oracle properties.
\newblock {\em Journal of the American Statistical Association\/}~{\em
  101\/}(476), 1418--1429.

\end{thebibliography}

\newpage

\section*{Supplementary material}

\beginsupplement

\subsection{Proof of Lemma \ref{lemma:op}} \label{sec:B1}
\label{sec:supp_op}

\noindent\underline{Part (i)}. Write
\begin{eqnarray*}
	\kappa_1-\kappa_{1,p} &=& 2
	\sum_{l=1}^{p}\sum_{l'=p+1}^{\infty}\beta_{l} \beta_{l'} \rho^{l'-l} +
	\sum_{l=p+1}^\infty\sum_{l'=p+1}^{\infty}\beta_{l} \beta_{l'} \rho^{|l-l'|}.
\end{eqnarray*}
Here,
\begin{eqnarray*}
	\bigg|\sum_{l=1}^{p}\sum_{l'=p+1}^{\infty}\beta_{l} \beta_{l'} \rho^{l'-l}\bigg| &=&  o(p^{-1/2})
\end{eqnarray*}
by Lemma \ref{lemma:ineqs}(iii), while
\begin{eqnarray*}
	\sum_{l=p+1}^{\infty}\sum_{l'=p+1}^{\infty}\beta_{l} \beta_{l'} \rho^{|l-l'|}
	&=& \sum_{l=p+1}^{\infty}\beta_l^2 ~+~ 2 \sum_{l=p+1}^{\infty}\sum_{l'=l+1}^{\infty} \beta_l \beta_{l'}\rho^{l'-l} \ \leq\
	C  \sum_{l=p+1}^{\infty} \beta_l^2 ~~=~~ o(p^{-1/2})
\end{eqnarray*}
by Lemma \ref{lemma:ineqs}(i), which concludes the proof.\hfill $\Box$\smallskip

\noindent \underline{Part (ii)}. Write
\begin{eqnarray}
\kappa_2-\kappa_{2,p} &=& \bigg(2 \sum_{k=1}^p\sum_{l=1}^p \sum_{l'=p+1}^\infty
+2 \sum_{k=p+1}^\infty \sum_{l=1}^p \sum_{l'=p+1}^\infty
+\sum_{k=1}^p \sum_{l=p+1}^\infty \sum_{l'=p+1}^\infty +\ \sum_{k=p+1}^p\sum_{l=1}^p \sum_{l'=1}^p \notag\\
&& +\ \sum_{k=p+1}^p\sum_{l=p+1}^\infty \sum_{l'=p+1}^\infty\bigg) \beta_l\beta_{l'}\rho^{|k-l|+|k-l'|}\notag\\
&=:& 2L_1+2L_2+L_3+L_4+L_5. \label{LLLLL}
\end{eqnarray}
We have
\begin{eqnarray*}
L_1 			 &=& \sum_{l=1}^{p} \sum_{l'=p+1}^{\infty}  \beta_{l} \beta_{l'} \sum_{k=1}^{p}\rho^{|k-l|+l'-k} \ =\  \sum_{l=1}^{p} \sum_{l'=p+1}^{\infty} \beta_{l} \beta_{l'}\bigg(\sum_{k=1}^{l} \rho^{l'+l-2k}  + \sum_{k=l+1}^{p}  \rho^{l'-l} \bigg) \\
&=& \sum_{l=1}^{p} \sum_{l'=p+1}^{\infty} \beta_{l} \beta_{l'}\Big(\frac{\rho^{l'-l}}{1-\rho^2} - \frac{\rho^{l+l'}}{1-\rho^2}  +(p-l)\rho^{l'-l} \Big).
\end{eqnarray*}
Thus,
\begin{eqnarray*}
|L_1|			&\leq&  \sum_{l=1}^{p} \sum_{l'=p+1}^{\infty} |\beta_{l}| |\beta_{l'}|\Big(\frac{|\rho|^{l'-l}}{1-\rho^2} + \frac{|\rho|^{l+l'}}{1-\rho^2}  +(l'-l)|\rho|^{l'-l} \Big) \ \le\ C \sum_{j=p+1}^{\infty}\beta_{j}^{2} = o(p^{-1/2})
\end{eqnarray*}
by Lemma \ref{lemma:ineqs}(ii)--(iv). For the term $L_2$ we have
\begin{eqnarray*}
L_2 &=&  \sum_{l=1}^{p} \sum_{l'=p+1}^{\infty}\beta_{l} \beta_{l'} \sum_{k=p+1}^{\infty} \rho^{k-l+|k-l'|} \ =\  \sum_{l=1}^{p} \sum_{l'=p+1}^{\infty}\beta_{l} \beta_{l'} \bigg( \sum_{k=p+1}^{l'} \rho^{l'-l}  + \sum_{k=l'+1}^{\infty} \rho^{2k-l-l'}\bigg).
\end{eqnarray*}
Thus, by Lemma \ref{lemma:ineqs}(ii)-(iii),
\begin{eqnarray*}
|L_2|  &\leq&  \sum_{l=1}^{p} \sum_{l'=p+1}^{\infty}|\beta_{l}| |\beta_{l'}| \Big( (l'-l)|\rho|^{l'-l}  + \frac{\rho^2}{1-\rho^2}|\rho|^{l'-l}\Big) \ =\  o(p^{-1/2}).
\end{eqnarray*}
For the term $L_3$  we have
\begin{eqnarray*}
L_3 &=& \sum_{l=p+1}^{\infty}\sum_{l'=p+1}^{\infty}\beta_{l} \beta_{l'} \sum_{k=1}^{p}\rho^{l+l'-2k} \ =\
\sum_{l=p+1}^{\infty}\sum_{l'=p+1}^{\infty}\beta_{l} \beta_{l'} \bigg( \frac{\rho^{l'+l-2p}}{1-\rho^2} - \frac{\rho^{l+l'}}{1-\rho^2} \bigg).
\end{eqnarray*}
Thus,
\begin{eqnarray*}
|L_3|	&\leq& \sum_{l=p+1}^{\infty}\sum_{l'=p+1}^{\infty}|\beta_{l}| |\beta_{l'}| \Big( \frac{|\rho|^{l'+l-2p}}{1-\rho^2} + \frac{|\rho|^{l+l'}}{1-\rho^2} \Big) \\
& \leq& \frac{1}{1-\rho^2}\bigg(\bigg( \sum_{l=p+1}^{\infty} |\beta_{l}||\rho|^{l-p}\bigg)^2 + \bigg( \sum_{l=p+1}^{\infty}|\beta_{l}| |\rho|^{l} \bigg)^2 \bigg) \\
& \leq&  C \sum_{l=p+1}^{\infty} \beta_{l}^2 + o(p^{-1/2})  = o(p^{-1/2}),
\end{eqnarray*}
since $\sum_{l=p+1}^{\infty} |\rho|^{2(l-p)} < \infty$ and due to H\"older's inequality. Further, for the term $L_4$ we have
\begin{eqnarray*}
L_4 &=&  \sum_{l=1}^{p} \sum_{l'=1}^{p} \beta_{l} \beta_{l'}  \sum_{k=p+1}^{\infty}\rho^{2k-l-l'} \ =\  \frac{\rho^2}{1-\rho^2}\sum_{l=1}^{p} \sum_{l'=1}^{p} \beta_{l} \beta_{l'}  \rho^{(p-l)+(p-l)}\\
&=&  \frac{\rho^2}{1-\rho^2}\bigg( \sum_{l=1}^{p} \beta_{l} \rho^{p-l}\bigg)^2 \ =\ o(p^{-1/2}),
\end{eqnarray*}
by Lemma \ref{lemma:bj1}.  Finally, for $L_5$ write
\begin{eqnarray*}
L_5 &=& \sum_{k=p+1}^{\infty} \sum_{l=p+1}^{\infty}\beta_l^2 \rho^{2|k-l|} + 2\sum_{k=p+1}^{\infty} \sum_{l,l'=p+1,\,l'>l}^{\infty}\beta_l \beta_{l'}\rho^{|k-l|+|k-l'|}.
\end{eqnarray*}
For the first summand,  we have
\begin{eqnarray}
\sum_{k,l=p+1}^{\infty} \beta_{l}^{2}\rho^{2|k-l|}  &=& \sum_{l=p+1}^{\infty}\beta_l^2 \sum_{k=p+1}^{\infty}   \rho^{2|k-l|} \mathbf{1}_{\{k \geq l\}} + \sum_{l=p+1}^{\infty}\beta_l^2 \sum_{k=p+1}^{\infty}   \rho^{2|k-l|} \mathbf{1}_{\{k < l\}} \notag \\
&=& \sum_{l=p+1}^{\infty}\beta_l^2 \sum_{k=l}^{\infty}   \rho^{2k-2l}  + \sum_{l=p+1}^{\infty}\beta_l^2 \sum_{k=p+1}^{l+1}   \rho^{2l-2k} \notag \\
&=& \sum_{l=p+1}^{\infty}\beta_l^2\bigg( \Big(\frac{1}{1-\rho^2}\Big)  + \Big(-\frac{\rho^{2 l-2 p}}{1-\rho^2}+\frac{1}{\rho^2 \left(1-\rho^2\right)}\Big) \bigg) \notag \\
&\leq& C \sum_{l=p+1}^{\infty}\beta_l^2,\label{eq:ineq:0}
\end{eqnarray}
where $C < \infty$. Similarly,
\begin{eqnarray}
\sum_{k=p+1}^{\infty }\sum_{l'>l}^{\infty}\beta_l \beta_{l'}\rho^{|k-l|+|k-l'|} 
&  =& \sum_{l=p+1}^{\infty}\sum_{l'=l+1}^{\infty}\sum_{k=p+1}^{\infty}\beta_l \beta_{l'}\rho^{2k-l-l'}\mathbf{1}_{\{k\geq l' > l\}} \label{eq:ineq:1} \\
&&+~~ \sum_{l=p+1}^{\infty}\sum_{l'=l+1}^{\infty}\sum_{k=p+1}^{\infty}\beta_l \beta_{l'}\rho^{l' -l }\mathbf{1}_{\{l' > k \geq l\}} \label{eq:ineq:2}\\
&& +~~ \sum_{l=p+1}^{\infty}\sum_{l'=l+1}^{\infty}\sum_{k=p+1}^{\infty}\beta_l \beta_{l'}\rho^{l'+l - 2k}\mathbf{1}_{\{l' > l > k\}}.~~~~~~~~~~~~~~ \label{eq:ineq:3}
\end{eqnarray}
For \eqref{eq:ineq:1}, write
\begin{eqnarray*}
\sum_{l=p+1}^{\infty}\sum_{l'=l+1}^{\infty}\sum_{k=p+1}^{\infty}\beta_l \beta_{l'}\rho^{2k-l-l'}\mathbf{1}_{\{k\geq l' > l\}} &=& \sum_{l=p+1}^{\infty}\sum_{l'=l+1}^{\infty} \beta_l \beta_{l'} \frac{\rho^{l'-l}}{1-\rho^2}.
\end{eqnarray*}
Thus, by Lemma~\ref{lemma:ineqs}(i), we have
\begin{eqnarray*}
\bigg|\sum_{l=p+1}^{\infty}\sum_{l'=l+1}^{\infty}\sum_{k=p+1}^{\infty}\beta_l \beta_{l'}\rho^{2k-l-l'}\mathbf{1}_{\{k\geq l' > l\}}\bigg| &\leq& C \sum_{l=p+1}^{\infty}\beta_{l}^2,
\end{eqnarray*}
for $C < \infty$. Next, for \eqref{eq:ineq:2}, write
\begin{eqnarray*}
\sum_{l=p+1}^{\infty}\sum_{l'=l+1}^{\infty}\sum_{k=p+1}^{\infty}\beta_l \beta_{l'}\rho^{l' -l }\mathbf{1}_{\{l' > k \geq l\}} &=& \sum_{l=p+1}^{\infty}\sum_{l'=l+1}^{\infty}\beta_l \beta_{l'}\rho^{l' -l }(l'-l).
\end{eqnarray*}
It follows from Lemma~\ref{lemma:ineqs}(ii) that
\begin{eqnarray*}
\bigg| \sum_{l=p+1}^{\infty}\sum_{l'=l+1}^{\infty}\sum_{k=p+1}^{\infty}\beta_l \beta_{l'}\rho^{l' -l }\mathbf{1}_{\{l' > k \geq l\}} \bigg|
&\leq& C\sum_{l=p+1}^{\infty}\beta_{l}^2,
\end{eqnarray*}
for $C < \infty$. Finally, for \eqref{eq:ineq:3}, we have
\begin{eqnarray*}
\sum_{k,l,l'=p+1}^{\infty}\beta_l \beta_{l'}\rho^{l'+l - 2k}\mathbf{1}_{\{l' > l > k\}} &=& \sum_{l=p+2}^{\infty}\sum_{l'=l+1}^{\infty}\beta_l \beta_{l'} \bigg(\frac{\rho^{(l-p) + (l'-p)}}{\rho^2-1}+\rho^2\frac{\rho^{l'-l}}{1-\rho^2}\bigg).
\end{eqnarray*}
Thus, by Lemma \ref{lemma:ineqs}(i),
\begin{eqnarray*}
\bigg|\sum_{k,l,l'=p+1}^{\infty}\beta_l \beta_{l'}\rho^{l'+l - 2k}\mathbf{1}_{\{l' > l > k\}}\bigg| &\leq& \frac{1}{2}\sum_{l=p+2}^{\infty}\sum_{l'=l+1}^{\infty}\left(\beta_l^2 \rho^{2(l'-p)} + \beta_{l'}^2 \rho^{2(l-p)}\right) ~+~ C\sum_{l=p+1}^{\infty}\beta_{l}^2 \\
&\leq&  C\sum_{l=p+1}^{\infty}\beta_{l}^2.
\end{eqnarray*}
Hence, \eqref{eq:ineq:0} and the estimates for
\eqref{eq:ineq:1}--\eqref{eq:ineq:3} yield
\begin{eqnarray*}
|L_5| &\leq& C \sum_{l=p+1}^{\infty} \beta_{l}^2 \ =\ o(p^{-1/2}).
\end{eqnarray*}
Equality \eqref{LLLLL} and estimates $|L_i|=o(p^{-1/2})$, $i=1,\dots,5$, complete the proof of
part (ii).\hfill ~$\Box$

\subsection{Proof of Lemma \ref{lemma:ineqs}}
\label{sec:supp_ineqs}
\begin{proof}
For inequality {\rm (i)}, we have that
\begin{eqnarray*}
	\bigg|\sum_{l=p+1}^{\infty}\sum_{l'=l+1}^{\infty} \beta_l \beta_{l'} \rho^{l'-l}\bigg| &\leq&
	\frac{1}{2} \sum_{l=p+1}^{\infty}\sum_{l'=l+1}^{\infty}\left( \beta_l^2|\rho|^{l'-l} + \beta_{l'}^2|\rho|^{l'-l} \right) \\
	&=&\frac{1}{2} \bigg( \frac{|\rho|}{1-|\rho|} \sum_{l=p+1}^{\infty} \beta_l^2  + \sum_{l'=p+1}^{\infty}\beta_{l'}^2 \bigg(-\frac{|\rho|^{l'-p}}{1-|\rho|}+\frac{1}{1-|\rho|}\bigg) \bigg) \\
	&\leq&  C \sum_{l=p+1}^{\infty} \beta_l^2.
\end{eqnarray*}
For inequality {\rm (ii)}, note that
\begin{eqnarray*}
	\bigg|\sum_{l=p+1}^{\infty}\sum_{l'=l+1}^{\infty} \beta_l \beta_{l'} \rho^{l'-l}(l'-l)\bigg|
	&\leq&
	\frac{1}{2}\sum_{l=p+1}^{\infty}\sum_{l'=l+1}^{\infty}\Big( \beta_l^2|\rho|^{l'-l}(l'-l) + \beta_{l'}^2|\rho|^{l'-l}(l'-l) \Big) \\
	&=& \frac{|\rho|}{(1-|\rho|)^2} \sum_{l=p+1}^{\infty} \beta_l^2 \\
	&&+~~\sum_{l'=p+2}^{\infty}\beta_{l'}^2 \frac{|\rho|^{l'-p}}{(1-|\rho|)^2}
	\big(-(l'-p)(1 - |\rho|)-|\rho|\big) \\
	&&+\ \sum_{l'=p+2}^{\infty}\beta_{l'}^2 \frac{|\rho|}{(1-|\rho|)^2} \\
	&\leq & \frac{2|\rho|}{(1-|\rho|)^2} \sum_{l=p+1}^{\infty} \beta_l^2-\sum_{l'=p+1}^{\infty}\beta_{l'}^2 \frac{|\rho|^{l'-p}(l'-p)}{1-|\rho|} \\
	&&-~~ \sum_{l'=p+1}^{\infty}\beta_{l'}^2 \frac{|\rho|^{l'-p}}{1-|\rho|} \\
	&\leq& C \sum_{l=p+1}^{\infty}\beta_{l}^2.
\end{eqnarray*}
For (iii), see that
\begin{eqnarray*}
	\bigg|\sum_{l=1}^{p} \sum_{l'=p+1}^{\infty }\beta_{l} \beta_{l'} \rho^{l'-l}\bigg| &\leq&  \sum_{l=1}^{p} \sum_{l'=p+1}^{\infty }\left(\beta_{l}^{2}|\rho|^{l'-l} + \beta_{l'}^{2}|\rho|^{l'-l} \right) \\
	&=& \sum_{l=1}^{p}\beta_{l}^{2} \sum_{l'=p+1}^{\infty }|\rho|^{l'-l} + \sum_{l'=p+1}^{\infty }\sum_{l=1}^{p}  \beta_{l'}^{2}|\rho|^{l'-l} \\
	&=& \sum_{l=1}^{p}\beta_{l}^2\frac{|\rho|^{p-l+1}}{1-|\rho|} + \sum_{l'=p+1}^{\infty} \beta_{l'}^{2}\bigg(\frac{|\rho|^{l'-p}}{1-|\rho|}-\frac{|\rho|^{l'}}{1-|\rho|}\bigg)\\
	&\le& o(1)+C \sum_{l=p+1}^{\infty} \beta_{l}^{2},
\end{eqnarray*}
where the last inequality follows from \eqref{a3}.

The proof of (iv) follow the same steps as that of (iii).
\end{proof}

\subsection{Proof of Lemma \ref{lemma:lambda_3}}
\label{sec:supp_lambda_3}
\begin{proof}
First, note, that by \eqref{skapa}, Lemma \ref{lemma:alt_expressions} and Remark \ref{remark1}, it holds that
\begin{eqnarray}
	\label{eq:lambda_3_2}
	\lim_{p \to \infty} \sum_{k=1}^{p} (\theta_{k}^{(p)})^2 &=& c_2\  <\   \infty.
\end{eqnarray}
Then, we have that
\begin{align*}
	& \sum_{i,j,k=1}^{p}\big(\rho^{|i-j|} + \theta_{i}^{(p)} \theta_{j}^{(p)}  \big)\big(\rho^{|i-k|} + \theta_{i}^{(p)} \theta_{k}^{(p)}\big)\big(\rho^{|k-j|} + \theta_{k}^{(p)} \theta_{j}^{(p)} \big) \\
	&~~~= \sum_{i,j,k=1}^{p}\big(\rho^{|i-j|+|i-k|} + \rho^{|i-j|} \theta_{i}^{(p)}\theta_{k}^{(p)} ~+~ \rho^{|i-k|} \theta_{i}^{(p)}\theta_{j}^{(p)} + (\theta_{i}^{(p)})^2\theta_{j}^{(p)}\theta_{k}^{(p)}\big) \big(\rho^{|k-j|} + \theta_{k}^{(p)} \theta_{j}^{(p)}  \big) \\
	&~~~= \sum_{i,j,k=1}^{p}\big( \rho^{|i-j|+|i-k|+|k-j|} ~+~ \rho^{|i-j|+|k-j|} \theta_{i}^{(p)}\theta_{k}^{(p)} ~+~  \rho^{|i-k|+|k-j|} \theta_{i}^{(p)}\theta_{j}^{(p)} ~+~ \rho^{|k-j|} (\theta_{i}^{(p)})^2\theta_{j}^{(p)}\theta_{k}^{(p)} \\ & ~~~~~~+~~ \rho^{|i-j|+|i-k|}\theta_{k}^{(p)} \theta_{j}^{(p)} ~+~ \rho^{|i-j|} \theta_{i}^{(p)}(\theta_{k}^{(p)})^2\theta_{j}^{(p)} ~+~\rho^{|i-k|} \theta_{i}^{(p)}(\theta_{j}^{(p)})^2\theta_{k}^{(p)} ~+~ (\theta_{i}^{(p)})^2(\theta_{j}^{(p)})^2(\theta_{k}^{(p)})^2   \big).
\end{align*}
In order to show \eqref{eq:lambda_3_3}, it suffices to show that the following results hold, since the remaining cases will be symmetric:
\begin{enumerate}[label={(\rm \roman*)}]
	\item $\begin{aligned}[t]
		\bigg|\sum_{i,j,k=1}^{p} \rho^{|i-j|+|i-k|+|k-j|} \bigg|~~=~~ o(p^{3/2});
	\end{aligned}$
	\item $\begin{aligned}[t]
		\bigg|\sum_{i,j,k=1}^{p}  \rho^{|i-j|+|k-j|} \theta_{i}^{(p)}\theta_{k}^{(p)} \bigg|~~=~~ o(p^{3/2});
	\end{aligned}$
	\item $\begin{aligned}[t]
		\sum_{i,j,k=1}^{p} \rho^{|k-j|} (\theta_{i}^{(p)})^2\theta_{j}^{(p)}\theta_{k}^{(p)} ~~=~~ o(p^{3/2});
	\end{aligned}$
	\item $\begin{aligned}[t]
		\sum_{i,j,k=1}^{p} (\theta_{i}^{(p)})^2(\theta_{j}^{(p)})^2(\theta_{k}^{(p)})^2 ~~=~~ o(p^{3/2}).
	\end{aligned}$
\end{enumerate}

\noindent \underline{Case (i)}. We have
\begin{eqnarray*}
	\sum_{i,j,k=1}^{p} \rho^{|i-j|+|i-k|+|k-j|} &=& \sum_{i=1}^{p} \sum_{k=1}^{p} \rho^{2|i-k|} ~+~ 2 \sum_{i > j} \sum_{k=1}^{p}\rho^{|i-j|+|i-k|+|k-j|},
\end{eqnarray*}
where
\begin{eqnarray} \label{eq:lambda_3_1}
	\sum_{i=1}^{p} \sum_{k=1}^{p} \rho^{2|i-k|} &=& \sum_{i=1}^{p} \bigg( \sum_{k=1}^{i} \rho^{2(i-k)} + \sum_{k=i+1}^{p} \rho^{2(k-i)} \bigg) ~~=~~ \mathcal{O}(p),
\end{eqnarray}
and
\begin{eqnarray*}
	\sum_{j = 1}^{p} \sum_{i = j+1}^{p} \sum_{k=1}^{p}\rho^{|i-j|+|i-k|+|k-j|} 
	&=& \sum_{j = 1}^{p} \sum_{i = j+1}^{p} \sum_{k=1}^{j}\rho^{2i-2k} ~+~\sum_{j = 1}^{p} \sum_{i = j+1}^{p} \sum_{k=j+1}^{i}\rho^{2i-2j} \\
	&& + ~~  \sum_{j = 1}^{p} \sum_{i = j+1}^{p} \sum_{k=i+1}^{p}\rho^{2k-2j} \\
	&=& \frac{\rho^2}{(1-\rho^2)^3}\Big(p (1-\rho^2) (3 \rho^{2 p}+\rho^2+2) - 2  (1+2 \rho^2) (1-\rho^{2 p}) \Big)
\end{eqnarray*}
so that
\begin{eqnarray*}
	\bigg|\sum_{j = 1}^{p} \sum_{i = j+1}^{p} \sum_{k=1}^{p}\rho^{|i-j|+|i-k|+|k-j|}\bigg| &=& {\cal O}(p) \ =\  o(p^{3/2}).
\end{eqnarray*}

\noindent \underline{Case (ii)}. We have,
\begin{eqnarray*}
	\sum_{i,j,k=1}^{p}  \rho^{|i-j|+|k-j|} \theta_{i}^{(p)}\theta_{k}^{(p)} &\leq&  \sum_{i,j,k=1}^{p} \left(\rho^{2|i-j|+2|k-j|} ~+~ (\theta_{i}^{(p)})^2 (\theta_{k}^{(p)})^2 \right) \\
	&=& \sum_{i,j,k=1}^{p} \rho^{2|i-j|+2|k-j|} ~+~ p \sum_{i,k=1}^{p} (\theta_{i}^{(p)})^2 (\theta_{k}^{(p)})^2.
\end{eqnarray*}
Observe, that by \eqref{eq:lambda_3_2}, we have $ p \sum_{i,k=1}^{p} (\theta_{i}^{(p)})^2 (\theta_{k}^{(p)})^2 = \mathcal{O}(p)$. Additionally,
\begin{eqnarray*}
	\sum_{i,j,k=1}^{p} \rho^{2|i-j|+2|k-j|} &=& \sum_{i,k=1}^{p} \rho^{2|k-i|} ~+~ 2\sum_{j=1}^{p}\sum_{i=j+1}^{p}\sum_{k=1}^{p} \rho^{2(i-j)+2|k-j|}.
\end{eqnarray*}
We use \eqref{eq:lambda_3_1} and note that
\begin{eqnarray*}
	\sum_{j=1}^{p}\sum_{i=j+1}^{p}\sum_{k=1}^{p} \rho^{2(i-j)+2|k-j|}  &=& 	\sum_{j=1}^{p}\sum_{i=j+1}^{p}\sum_{k=1}^{j} \rho^{2(i-j)+2(j-k)} ~+~	\sum_{j=1}^{p}\sum_{i=j+1}^{p}\sum_{k=j+1}^{p} \rho^{2(i-j)+2(k-j)} \\
	&=& \frac{\rho^2}{(1-\rho^2)^3}\Big( p(1-\rho^2) (\rho^{2 p}+\rho^2+1) \\
	&&- (1 - \rho^{2 p})(3\rho^2 + \frac{1-2 \rho^{2 p+2}}{1+\rho^2})\Big) \\
	&=& {\cal O} (p).
\end{eqnarray*}
Thus, it follows that
\begin{eqnarray*}
	\sum_{i,j,k=1}^{p}  \rho^{|i-j|+|k-j|} \theta_{i}^{(p)}\theta_{k}^{(p)} &=& \mathcal{O}(p) \ =\ o(p^{3/2}).
\end{eqnarray*}

\noindent \underline{Case (iii)}.
By \eqref{eq:lambda_3_1} and \eqref{eq:lambda_3_2},
\begin{eqnarray*}
	\sum_{i,j,k=1}^{p}  (\theta_{i}^{(p)})^2  \rho^{|k-j|} \theta_{j}^{(p)}\theta_{k}^{(p)} &=& \sum_{i=1}^{p} (\theta_{i}^{(p)})^2  \sum_{k,j=1}^{p}\rho^{|k-j|} \theta_{j}^{(p)}\theta_{k}^{(p)} \\
	&\leq& \sum_{i=1}^{p} (\theta_{i}^{(p)})^2 \bigg( \sum_{k,j=1}^{p} \Big( \rho^{2|k-j|} + (\theta_{j}^{(p)})^2 (\theta_{k}^{(p)})^2  \Big) \bigg) \\
	&=& o(p^{3/2}).
\end{eqnarray*}

\noindent \underline{Case (iv)}. By \eqref{eq:lambda_3_2},
\begin{eqnarray*}
	\sum_{i,j,k=1}^{p} (\theta_{i}^{(p)})^2(\theta_{j}^{(p)})^2(\theta_{k}^{(p)})^2 &=& \bigg( \sum_{i=1}^{p} (\theta_{i}^{(p)})^2 \bigg)^{3} ~~=~~ o(p^{3/2}).
\end{eqnarray*}

\noindent Thus, this concludes the proof of \eqref{eq:lambda_3_3}.
\end{proof}

\subsection{Proof of result \eqref{eq:supp_1} of Lemma \ref{lemma:alt_expressions}(ii)}
\label{sec:supp_a2}
\begin{proof}
Denote
\begin{eqnarray*}
	K_1(l) &:=& \sum_{k=1}^{p} \rho^{2|k-l|}, \ 	\		 K_{2}(l,l') \ :=\  \sum_{k=1}^{p}\rho^{|k-l|+|k-l'|}.
\end{eqnarray*}
Then, we can write
\begin{eqnarray}
	\sum_{k=1}^{p}\bigg( \sum_{l=1}^{p}\beta_l \rho^{|k-l|}\bigg)^2 &=& \sum_{l=1}^{p}\beta_l^2 K_1(l)+ 2 \sum_{l'>l}\beta_{l} \beta_{l'} K_2(l,l').
\end{eqnarray}
First, note that
\begin{eqnarray*}
	K_1(l) 
	&=&  \frac{-\rho^{-2 l+2 p+2}-\rho^{2 l}+\rho^2+1}{1-\rho^2} ~~\to~~ \frac{-\rho^{2 l}+\rho^2+1}{1-\rho^2} ~~\text{ as } p \to \infty.
\end{eqnarray*}
Then, using the notation by Definition \ref{def:main}, it follows by the Dominated Convergence Theorem (DCT) that
\begin{eqnarray*}
	\sum_{l=1}^{p}\beta_l^2 K_1(l) &\to& \sum_{l=1}^{\infty}\beta_l^2 \frac{1+\rho^2-\rho^{2 l}}{1-\rho^2} ~~=~~ \beta(1) \frac{1+\rho^2}{1-\rho^2} ~-~\beta(\rho^2)\frac{1}{1-\rho^2}.
\end{eqnarray*}
Next, consider $K_2(l,l')$, $l' > l$. It's straightforward to see that,
\begin{eqnarray*}
	\sum_{k=1}^{l} \rho^{|k-l|+|k-l'|} &=& \sum_{k=1}^{l} \rho^{l-k + l' - k } \ =\ \frac{(1-\rho^{2 l})				\rho^{l'-l}}{1-\rho^2}, \\
	\sum_{k=l+1}^{l'} \rho^{|k-l|+|k-l'|} &=& \sum_{k=l+1}^{l'} \rho^{k-l + l' - k } \ =\  (l'-l) \rho^{l'-l}, \\
	\sum_{k=l'+1}^{p} \rho^{|k-l|+|k-l'|} &=& \sum_{k=l'+1}^{p} \rho^{k-l + k-l' } \ =\ \frac{\rho^{-l-l'+2} (\rho^{2 l'}-\rho^{2p})}{1-\rho^2}.
\end{eqnarray*}
By simplifying, it follows that
\begin{eqnarray*}
	K_2(l,l') &=& \frac{(l-l'-1) \rho^{l'-l}+\rho^{
			l+l'}+(-l+l'-1) \rho^{ l'+2-l}+\rho^{2
			p+2-l-l'}}{\rho^2-1} \\
	&\to&  \frac{\rho^{l'-l} ((l'-l)(\rho^2-1) - 1 - \rho^2) +
		\rho^{					l+l'}}{\rho^2-1} ~~\text{ as } p \to \infty,
\end{eqnarray*}
therefore, using the notation of Definition \ref{def:main}, we rewrite
\begin{eqnarray*}
	\sum_{l'=2}^{\infty}\sum_{l=1}^{l'-1}K_2(l,l') &=& \sum_{l'=2}^{\infty}\sum_{l=1}^{l'-1}\beta_{l} \beta_{l'} \Bigg( \rho^{l'-l}(l'-l) + \rho^{l'-l}\frac{1+\rho^2}{1-\rho^2} -  \rho^{l'+l}\frac{1}{1-\rho^2} \Bigg) \\
	&=& b_{1}^{(1)}(\rho) + b_{1}(\rho)\frac{1+\rho^2}{1-\rho^2}- b_{2}(\rho)\frac{1}{1-\rho^2},
\end{eqnarray*}
which concludes the proof of \eqref{eq:supp_1}.
\end{proof}

\subsection{Proof of results \eqref{eq:supp_j1}--\eqref{eq:supp_j2} of Lemma \ref{lemma:alt_expressions}(iii)}
\label{sec:supp_a3}

\begin{proof}
	
	\noindent First, we establish the following observation:
	\begin{eqnarray}
		\sum_{l_2=1}^p|\rho|^{|l_1-l_2|} &=& |\rho|^{l_1} \sum_{l_2=1}^{l_1}|\rho|^{-l_2} + |\rho|^{-l_1} \sum_{l_2=l_1+1}^p|\rho|^{l_2} \nonumber\\
		&=& |\rho|^{l_1} \frac{|\rho|^{-1}(|\rho|^{-l_1}-1)}{|\rho|^{-1}-1}
		+ |\rho|^{-l_1} \frac{|\rho|^{l_1+1}(|\rho|^{p-l_1}-1)}{|\rho|-1}\nonumber\\
		&=& \frac{1+|\rho| -|\rho|^{l_1} - |\rho|^{p-l_1+1}}{1-|\rho|} \  \le\ \frac{1+|\rho|}{1-|\rho|}. \label{uii}
	\end{eqnarray}
	Consider $J_{1}(l)$ in \eqref{J1l}. By (\ref{uii}), write,
	\begin{eqnarray*}
		J_{1}(l) &=&  \sum_{k,k'=1}^p\rho^{|k-k'|} \rho^{|k-l|} \rho^{|k'-l|} {\bf 1}_{\{l=l'\}}\ =\
		\sum_{k=1}^p
		\rho^{2|k-l|} +  2  \sum_{k<k'} \rho^{|k'-k|+|k-l|+|k'-l|}\\
		&=& \frac{1}{1-\rho^2} \left(1+\rho^2-\rho^{2l}-\rho^{2(p-l+1)}\right)+  2  \sum_{k<k'}
		\rho^{|k'-k|+|k-l|+|k'-l|}.
	\end{eqnarray*}
	Observe, that
	\begin{eqnarray}
		\sum_{l=1}^p \beta_l^2 \rho^{2(p-l+1)} &=& \sum_{l=1}^{\lfloor\sqrt p\rfloor}  \beta_l^2 \rho^{2(p-l+1)} ~+~\sum_{l=\lfloor\sqrt p\rfloor+1}^p  \beta_l^2 \rho^{2(p-l+1)} \nonumber \\
		&\le & \rho^{2(p-\lfloor\sqrt p\rfloor+1)}\sum_{l=1}^{\lfloor\sqrt p\rfloor}  \beta_l^2
		~+~ \rho^2 \sum_{l=\lfloor\sqrt p\rfloor+1}^p  \beta_l^2 \ \to \ 0. \label{a3}
	\end{eqnarray}
	Hence,
	\begin{eqnarray}\label{u5}
		\sum_{l=1}^p \beta_l^2 \sum_{k=1}^p
		\rho^{2|k-l|} &\to& \frac{1+\rho^2}{1-\rho^2} \sum_{k=1}^\infty\beta_l^2 ~-~\frac{1}{1-\rho^2} \sum_{k=1}^\infty\beta_l^2\rho^{2l} \notag \\
		&=& \beta(1) \frac{1 + \rho^2}{1 - \rho^2} ~-~  \beta(\rho^2)\frac{1}{1-\rho^2}.
	\end{eqnarray}
	Similarly,
	\begin{eqnarray*}
		\sum_{k<k'}
		\rho^{|k'-k|+|k-l|+|k'-l|} &=& \sum_{k<k'}
		\rho^{2l - 2k }\mathds{1}_{\{l \geq k\}} ~~+~~ \sum_{k<k'}
		\rho^{2k'-2k}\mathds{1}_{\{k < l \leq k' \}} ~~+~~ \sum_{k<k'}
		\rho^{2k'-2l}\mathds{1}_{\{k' > l \}}.
	\end{eqnarray*}
	The first term can be rewritten,
	\begin{eqnarray*}
		\sum_{k<k'}
		\rho^{2l - 2k }\mathds{1}_{\{l \geq k\}} ~~= ~~	\sum_{k=1}^{l} \sum_{k' = k+1}^{l} \rho^{2l-2k}
		&=& \frac{ \rho^{2l}\left(-l + l\rho^2 - \rho^2 \right) + \rho^2}{\left(1-\rho^2\right)^2}.
	\end{eqnarray*}
	We get
	\begin{eqnarray}
		\sum_{l=1}^{p}\beta_{l}^2 \sum_{k<k'}
		\rho^{2l - 2k } &=&\sum_{l=1}^{p}\beta_{l}^2 \frac{ \rho^{2l}\left(-l + l\rho^2 - \rho^2 \right) + \rho^2}{\left(1-\rho^2\right)^2} \notag \\
		&\to& \frac{\rho^2}{(1-\rho^2)^2}\beta(1) ~-~ \frac{1}{1 - \rho^2} \beta^{(1)}(\rho^2) ~-~ \frac{\rho^2}{(1-\rho^2)^2}\beta(\rho^2).~~~~~~~~~ \label{eq:beta_1}
	\end{eqnarray}
	Similarly,
	\begin{eqnarray*}
		\sum_{k<k'}
		\rho^{2k'-2k}\mathds{1}_{\{k < l \leq k' \}} &=& \sum_{k=1}^{l} \sum_{k' = l+1}^{p	} \rho^{2k'-2k} \\&=&-\frac{\rho^{2(p-l+1)}}{\left(1-\rho^2\right)^2}~-~\frac{\rho^{2(l+1)}}{\left(1-\rho^2\right)^2}
		~+~\frac{\rho^{2(p+1)}}{\left(1-\rho^2\right)^2}~+~\frac{\rho^2}{\left(1-\rho^2\right)^2},
	\end{eqnarray*}
	where due to \eqref{a3},
	\begin{eqnarray}
		\sum_{l=1}^{p}\beta_{l}^2 \sum_{k<k'}
		\rho^{2k'-2k}\mathds{1}_{\{k < l \leq k' \}} &\to& \sum_{l=1}^{\infty}\beta_{l}^2\bigg( \frac{\rho^2}{\left(1-\rho^2\right)^2 } -\frac{\rho^{2(l+1)}}{\left(1-\rho^2\right)^2} \bigg) \notag \\
		&=& \frac{\rho^2}{(1-\rho^2)^2}\beta(1) ~-~ \frac{\rho^2 }{(1-\rho^2)^2}\beta(\rho^2).~~~ \label{eq:beta_2}
	\end{eqnarray}
	Finally, observe that
	\begin{eqnarray*}
		\sum_{k<k'}
		\rho^{2k'-2l}\mathds{1}_{\{k' < l \}}   &=& \sum_{k=l+1}^{p-1} \sum_{k' = k+1}^{p	} \rho^{2k'-2l} \\
		&=& \frac{(p-l) \rho^{2(p-l+1)}}{\left(1-\rho^2\right)^2} ~~-~~\frac{(p-l+1) \rho^{2(p-l+2)}}{\left(1-\rho^2\right)^2}
		~~+~~ \frac{\rho^4}{\left(1-\rho^2\right)^2},
	\end{eqnarray*}
	where due to \eqref{a3}, it remains to see that, as $p \to \infty$,
	\begin{eqnarray}
		\sum_{l=1}^{p}\beta_{l}^2 \sum_{k<k'}
		\rho^{2k'-2l}\mathds{1}_{\{k' < l \}}  &\to& \sum_{l=1}^{\infty}\beta_l^2 \frac{\rho^4}{(1-\rho^2)^2} ~=~ \frac{\rho^4}{(1-\rho^2)^2} \beta(1). \label{eq:beta_3}
	\end{eqnarray}
	Finally, by collecting the  terms of (\ref{eq:beta_1}), (\ref{eq:beta_2}) and (\ref{eq:beta_3}) and simplifying, we get
	\begin{eqnarray*}
		\lim_{p\to\infty}\sum_{l=1}^{p}\beta_{l}^2 \sum_{k<k'}^{p}
		\rho^{|k'-k|+|k-l|+|k'-l|} &=& \frac{\rho^4}{(1-\rho^2)^2} \beta(1) -\frac{\rho^2 }{(1-\rho^2)^2}\beta(\rho^2) ~+~ \frac{2\rho^2}{(1-\rho^2)^2}\beta(1)
		\\
		&&-~~ \frac{1}{1 - \rho^2} \beta^{(1)}(\rho^2) ~-~ \frac{\rho^2}{(1-\rho^2)^2}\beta(\rho^2) \\
		&=& \beta(1)\Big( \frac{\rho^2(\rho^2 + 2)}{(1-\rho^2)^2} \Big) - \beta(\rho^2)\Big(\frac{2\rho^2}{(1-\rho^2)^2} \Big) ~-~ \frac{ \beta^{(1)}(\rho^2) }{1 - \rho^2} .
	\end{eqnarray*}
	Therefore, from \eqref{u5}, (\ref{eq:beta_1}), (\ref{eq:beta_2}) and (\ref{eq:beta_3}),
	\begin{eqnarray*}
		\sum_{l=1}^{p}\beta_{l}^{2} J_1(l) &\to& \beta(1) \frac{1 + \rho^2}{1 - \rho^2} ~-~ \frac{1}{1-\rho^2} \beta(\rho^2) \\
		&+& 2\bigg( \beta(1) \frac{\rho^2(\rho^2 + 2)}{(1-\rho^2)^2} ~-~ \beta(\rho^2)\,\frac{2\rho^2}{(1-\rho^2)^2} ~-~ \frac{1}{1 - \rho^2} \beta^{(1)}(\rho^2)  \bigg) \\
		&=&\beta(1)\, \frac{\rho^4+4 \rho^2+1}{(1-\rho^2)^2} ~-~ \beta(\rho^2)\,\frac{3 \rho^2+1}{(1-\rho^2)^2} ~-~ \frac{2}{1 - \rho^2} \beta^{(1)}(\rho^2),
	\end{eqnarray*}
	which concludes the proof of \eqref{eq:supp_j1}.\smallskip
	
	Next, consider $J_{2}(l,l')$ in \eqref{J2ll}. According to the arrangement of indices $k,k,l,l'$, we have 9 cases:  		
	\begin{enumerate}
		\item $k,k'\in\{1,\dots,l\}$,
		\item $k\le l$, $l<k\le l'$,
		\item $k\le l$, $l'<k'\le p$,
		\item $l< k\le l'$, $1\le k'\le l$,
		\item $k,k'\in \{l+1,\dots,l'\}$,
		\item $l< k\le l'$, $l'< k'\le p$,
		\item $l'< k\le p$, $1\le k'\le l$,
		\item $l'< k\le p$, $l< k'\le l'$,
		\item $ k,k'\in \{l',\dots,p\}$.
	\end{enumerate}

	\noindent \underline{Case 1}:
	\begin{eqnarray*}
		\sum_{k,k'=1}^p\rho^{|k-k'|} \rho^{|k-l|} \rho^{|k'-l'|}{\bf 1}_{\{k,k'\le l\}}
		&=& \rho^{l+l'}\sum_{k,k'=1}^l\rho^{|k-k'|-k-k'} \\
		&=& \frac{(-(2 l+1) \rho^{2 l}+(2l-1) \rho^{2 l+2}+\rho^2+1)
			\rho^{l'-l}}{(1-\rho^2)^2}.
\end{eqnarray*}

\noindent \underline{Case 2}:
\begin{eqnarray*}
	\sum_{k,k'=1}^p\rho^{|k-k'|} \rho^{|k-l|} \rho^{|k'-l'|}{\bf 1}_{\{k\le l, l<k'\le l'\}}
	&=& \rho^{l+l'}\sum_{k=1}^l\sum_{k'=l+1}^{l'} \rho^{-2k} \\
	&=& \frac{(1-\rho^{2 l})
		(l'-l) \rho^{l'-l}}{1-\rho^2}.
\end{eqnarray*}
\noindent \underline{Case 3}:
\begin{eqnarray*}
	\sum_{k,k'=1}^p\rho^{|k-k'|} \rho^{|k-l|} \rho^{|k'-l'|}{\bf 1}_{\{k\le l, l'<k'\le p\}}
	&=& \rho^{l-l'}\sum_{k=1}^l\sum_{k'=l'+1}^p \rho^{-2k+2k'} \\
	&=& \frac{(1-\rho^{2 l}) \rho^{-l'-l+2}
		(\rho^{2l'}-\rho^{2p})}{(1-\rho^2)^2}.
\end{eqnarray*}
\noindent \underline{Case 4}:
\begin{eqnarray*}
	\sum_{k,k'=1}^p\rho^{|k-k'|} \rho^{|k-l|} \rho^{|k'-l'|}{\bf 1}_{\{l<k\le l', 1\le k'\le l\}}
	&=& \rho^{-l+l'}\sum_{k=l+1}^{l'}\sum_{k'=1}^l \rho^{2k-2k'} \\
	&=&\frac{(1-\rho^{2 l}) \rho^{l'-3 l+2}
		(\rho^{2 l}-\rho^{2
			l'})}{(1-\rho^2)^2}.
\end{eqnarray*}
\noindent \underline{Case 5}:
\begin{eqnarray*}
	\sum_{k,k'=1}^p\rho^{|k-k'|} \rho^{|k-l|} \rho^{|k'-l'|}{\bf 1}_{\{l<k,k'\le l'\}}
	&=& \rho^{-l+l'}\sum_{k,k'=l+1}^{l'}\rho^{|k-k'|+k-k'} \\
	&=&\rho^{-l+l'}\bigg(\sum_{k=l+1}^{l'}\sum_{k': k' \geq k}^{l'}1 + \sum_{k=l+1}^{l'} \sum_{k': k' \geq l+1}^{k-1}\rho^{2k-2k'} \bigg) \\
	&=& \frac{1}{2} \rho^{l'-3 l} \bigg(\rho^{2 l}
	(-l'+l-1)		(l-l')\\
	&&+~~\frac{2 \rho^2	\big((1-\rho^2) \rho^{2 l} l'+\rho^{2
			l'}-(l+1) \rho^{2 l}+l \rho^{2
			l+2}\big)}{(1-\rho^2)^2}\bigg).
\end{eqnarray*}

\noindent \underline{Case 6}:

\begin{eqnarray*}
	\sum_{k,k'=1}^p\rho^{|k-k'|} \rho^{|k-l|} \rho^{|k'-l'|}{\bf 1}_{\{l<k\le l', l'<k'\le p\}}
	&=& \rho^{-l-l'}\sum_{k=l+1}^{l'}\sum_{k'=l'+1}^p \rho^{2k'} \\
	&=& \frac{\left(l'-l\right) \rho^{-l'-l+2}
		(\rho^{2 l'} - \rho^{2 p})}{1-\rho^2}. 
\end{eqnarray*}
\noindent \underline{Case 7}:
\begin{eqnarray*}
	\sum_{k,k'=1}^p\rho^{|k-k'|} \rho^{|k-l|} \rho^{|k'-l'|}{\bf 1}_{\{l'<k\le p, 1\le k'\le l\}}
	&=& \rho^{-l+l'}\sum_{k=l'+1}^p \sum_{k'=1}^l \rho^{2k-2k'} \\
	&=& \frac{(1-\rho^{2 l}) \rho^{l'-3 l+2}
		(\rho^{2 l'}-\rho^{2				p})}{(1-\rho^2)^2}.
\end{eqnarray*}
\noindent \underline{Case 8}:
We have
\begin{eqnarray*}
	\sum_{k,k'=1}^p\rho^{|k-k'|} \rho^{|k-l|} \rho^{|k'-l'|}{\bf 1}_{\{l'<k\le p, l< k'\le l'\}}
	&=& \rho^{-l+l'}\sum_{k=l'+1}^p \sum_{k'=l+1}^{l'}\rho^{2k-2k'} \\
	&=& \frac{\rho^{-3 l-l'+2} (\rho^{2 l}-\rho^{2 l'})(\rho^{2
			l'}-\rho^{2 p})}{(1-\rho^2)^2}.
\end{eqnarray*}

\noindent \underline{Case 9}:
We have
\begin{eqnarray*}
	\sum_{k,k'=1}^p\rho^{|k-k'|} \rho^{|k-l|} \rho^{|k'-l'|}{\bf 1}_{\{l'<k,k'\le p\}}
	&=& \rho^{-l-l'}\sum_{k,k'=l'+1}^p\rho^{|k-k'|+k+k'}  \\
	&=& \rho^{-l-l'}\bigg( \sum_{k=l'+1}^p \sum_{k'=l'+1 }^{k} \rho^{2k}
	+ \sum_{k=l'+1}^p \sum_{k'=k+1}^p \rho^{2k'} \bigg)  \\
	&=& \frac{\rho^{-l'-l+2}}{(1-\rho^2)^2}\big(2 (1-\rho^2)
	l' \rho^{2 p}+\rho^{2 l'}+\rho^{2 l'+2}\\&&-~(2
	p+1) \rho^{2 p}+(2 p-1) \rho^{2p+2}\big).
\end{eqnarray*}

\noindent Observe, that
\begin{eqnarray}
	\bigg|	\sum_{l'=2}^{p}\sum_{l=1}^{l'-1}\beta_l \beta_{l'}\rho^{2p -l -l'}(l+l'-2p) \bigg| &\leq&	\bigg|\sum_{l'=2}^{p}\sum_{l=1}^{l'-1}\beta_l \beta_{l'}\rho^{p-l} \rho^{p-l'}(p-l)\bigg|  \notag  \\
	&&+	\bigg|\sum_{l'=2}^{p}\sum_{l=1}^{l'-1}\beta_l \beta_{l'}\rho^{p-l} \rho^{p-l'}(p-l')\bigg|. \label{eq:s15}
\end{eqnarray}
The two summands of \eqref{eq:s15} are symmetric, therefore due to brevity we consider only the first term. The proof for the second term will be analogous. Note,
\begin{eqnarray} \label{eq:s16}
	\bigg|\sum_{l'=2}^{p}\sum_{l=1}^{l'-1}\beta_l \beta_{l'}\rho^{p-l} \rho^{p-l'}(p-l)\bigg| &\leq & \sum_{l'=2}^{p}|\beta_{l'}| |\rho|^{p-l'}\sum_{l=1}^{p-1}|\beta_l| |\rho|^{p-l}(p-l),
\end{eqnarray}
where
\begin{eqnarray*}
	\sum_{l=1}^{p-1}|\beta_l| |\rho|^{p-l}(p-l) &\leq& \bigg(\sum_{l=1}^{p-1}\beta_l^2\bigg)^{1/2} \bigg(\sum_{l=1}^{p-1} \rho^{2(p-l)}(p-l)^2\bigg)^{1/2} < \infty
\end{eqnarray*}
holds due to  $\sum_{j=1}^{\infty}\beta_j^2 < \infty$. It remains to note that
\begin{eqnarray*}
	\sum_{l'=2}^{p}|\beta_{l'}| |\rho|^{p-l'} &=& \sum_{l'=2}^{\lfloor \sqrt{p}\rfloor }|\beta_{l'}| |\rho|^{p-l'} + \sum_{l'=\lfloor \sqrt{p}\rfloor  + 1}^{p }|\beta_{l'}| |\rho|^{p-l'} \\
	&\leq& |\rho|^{p - \lfloor \sqrt p \rfloor } p^{1/4} \bigg(\sum_{l'=2}^{\lfloor \sqrt{p} \rfloor }\beta_{l'}^2\bigg)^{1/2} + \bigg( \sum_{l'=\lfloor \sqrt{p}\rfloor  + 1}^{p }\beta_{l'}^2 \bigg)^{1/2} \bigg( \sum_{l'=\lfloor \sqrt{p}\rfloor  + 1}^{p }\rho^{2(p-l')}\bigg)^{1/2} \\
	&\to& 0,
\end{eqnarray*}
since $\sum_{l'=2}^{\lfloor \sqrt{p} \rfloor }\beta_{l'}^2 < \infty$ and $ \sum_{l'=\lfloor \sqrt{p}\rfloor  + 1}^{p }|\rho|^{2(p-l')} < \infty$. Thus, by \eqref{eq:s15} and \eqref{eq:s16}, it follows that
\begin{eqnarray}
	\bigg|	\sum_{l'=2}^{p}\sum_{l=1}^{l'-1}\beta_l \beta_{l'}\rho^{2p -l -l'}(l+l'-2p) \bigg| &\to& 0, \text{ as } p \to \infty.  \label{eq:s17}
\end{eqnarray}

\noindent Due to the results of \eqref{a3} and \eqref{eq:s15}--\eqref{eq:s17}, it follows that the collected Cases 1--9 can be greatly simplified, leading to:
\begin{eqnarray*}
	J_{2}(l,l') &\to&
	\frac{1}{2(1-\rho^2)^2}\big(\rho^{l'-l}((l'-l)^2 (1-\rho^2)^2 + 3(1-\rho^4)(l'-l) + 2(1 + 4\rho^2 + \rho^4))\\
	&&  +\ \rho^{l'+l}(-2(1-\rho^2)(l'+l) - 2 - 6\rho^2)\big).
\end{eqnarray*}

Therefore, by DCT, as $p \to \infty$, we have
\begin{eqnarray*}
	\sum_{l'>l}\beta_{l}\beta_{l'} J_2(l,l') &=&
	\frac{1}{2(1-\rho^2)^2}\bigg( \sum_{l'>l}\beta_{l}\beta_{l'}\rho^{l'-l}
	\big( (l'-l)^2 (1-\rho^2)^2 \\
	&&+\ 3(1-\rho^4)(l'-l) + 2(1 + 4\rho^2 + \rho^4)\big) \\
	&&+\ \sum_{l'>l}\beta_{l}\beta_{l'}\rho^{l'+l} \big(-2(1-\rho^2)(l'+l) - 2 - 6\rho^2  \big) \\
	&\to& \frac{1}{2(1-\rho^2)^2}\big( b^{(2)}(\rho) (1-\rho^2)^2
	+ 3b_{1}^{(1)}(\rho) (1-\rho^4) \\
	&&+~~ 2b_{1}(\rho)(1 + 4\rho^2 + \rho^4) - 2b_{2}^{(1)}(\rho)(1-\rho^2) -2 b_{2}(\rho)(1+3\rho^2) \big),
\end{eqnarray*}
which concludes the proof of \eqref{eq:supp_j2}.
\end{proof}

\end{document}